\documentclass[12pt]{amsart}

\usepackage{amsthm, amsmath, amscd, amssymb, latexsym, stmaryrd, color}

\usepackage[all]{xypic}
\usepackage{color}
\usepackage[dvipsnames]{xcolor}
\usepackage{fullpage}
\definecolor{hot}{RGB}{65,105,225}
\usepackage[pagebackref=true,colorlinks=true, linkcolor=Blue,  citecolor=BlueViolet, urlcolor=hot]{hyperref}

\usepackage[%
    left=1.in,%
    right=1.in,%
    top=1.5in,%
    bottom=1.5in,%
    paperheight=11in,%
    paperwidth=8.5in%
]{geometry}
\usepackage[all,arc]{xy}
\usepackage{enumerate}

\usepackage{mathtools}
\usepackage{mathrsfs}
\usepackage{graphicx}
\usepackage{hyperref}
\usepackage{parskip}
\usepackage{fancyhdr}

\usepackage[T1]{fontenc}
\usepackage{newtx}
\usepackage{tikz}
\usepackage{tikz-cd}
\usepackage{dsfont}
\usetikzlibrary{matrix}

\newtheorem{thm}{Theorem}[section]
\newtheorem{cor}[thm]{Corollary}
\newtheorem{prop}[thm]{Proposition}
\newtheorem{lem}[thm]{Lemma}

\newtheorem{rem}[thm]{Remark}
\newtheorem{defn}[thm]{Definition}

\makeatletter
\let\c@equation\c@thm
\makeatother
\numberwithin{equation}{section}

\makeatletter
\def\thm@space@setup{%
  \thm@preskip=\parskip \thm@postskip=0pt
}
\makeatother

\title{Exact fillings using universal covers in Floer theory. }
\date{}
\author{Shahnaz Shamim Shahul}
\address{Université de Toulouse\\CNRS\\Institut de Mathématiques de Toulouse \\ France\\ }
\email{shahnaz\_shamim.shahul@math.univ-toulouse.fr}

\begin{document}
\maketitle

\begin{abstract}
   We study exact fillings of Legendrians and contact manifolds using Floer theory of universal covers. We prove  vanishing results on wrapped Floer homology of the universal cover and symplectic (co)homology with $k[\pi_1]$ coefficients by using lifted versions of Ritter's TQFT operations. We use these results to show that an exact Maslov zero Lagrangian filling of the standard Legendrian $\Lambda_{std}$ in any Liouville filling of $(S^{2n-1},\xi_{std})$ is diffeomorphic to a ball in dimension at least 5. We show that the existence of a topological simple exact filling $W$ with vanishing symplectic homology for a dynamically convex contact manifold $M^{2n-1}$ reveals information about homotopy groups of $W$ and $M.$ We also strengthen a theorem of Zhou for ADC-contact manifolds as an application of universal covers.
\end{abstract}

\section{Introduction}
It is a central question in symplectic and contact topology to study symplectic fillings of a  contact manifold or, in the relative setting, Lagrangian fillings of a  Legendrian. Floer theory has been used effectively to study obstructions to the existence of such fillings. For a non-exhaustive list of references see \cite{lazarev2020contact},\cite{oancea2012topology},\cite{zhou2021symplectic},\cite{Zhou_2024}in the contact setting and \cite{kim2024topology},\cite{chantrainefloer} in the Legendrian case. 

Covering spaces and local systems have also been used in Floer theory with great effectiveness. In \cite{damian2012floer}, Damian constructed Lagrangian Floer homology of a covering space and used it to prove constraints on the Maslov number of a monotone embedding. \cite{chantrainefloer} used relations with bilinearized Legendrian contact homology and a construction of wrapped Floer homology of covering spaces to prove that for the standard Legendrian $\Lambda_{std}:={\{x+iy\in S^{2n-1}\subset \mathbb{C}^n, y=0  \}}$, every exact Lagrangian filling in the standard symplectic filling of $(B^{2n},d\lambda_{std})$ is diffeomorphic to $B^n$. Zhou also studied obstructions to symplectic fillings using covering spaces in \cite{zhou2021symplectic} and \cite{zhou2023fillings}. Applications of local systems in Floer theory can be seen in \cite{albers2017local} and higher-local systems in \cite{barraud2007lagrangian},\cite{barraud2024floer}.

Note that the work of \cite{chantrainefloer} is a partial generalization of a result of Eliashberg-Polterovich \cite{eliashberg1996local}
  which states that there is a unique exact Lagrangian filling for the standard Legendrian $\Lambda_0$ in $(S^3,\xi_{std})$ with it's unique exact filling diffeomorphic to $B^2$ in the standard ball $(B^{4},d\lambda_{std})$.
In \cite{kim2024topology}, Kim and Kwon advanced this work by studying the topology of an exact Maslov-zero filling $L$ of the standard Legendrian $\Lambda_{std}$ in \textit{any Liouville filling} of the $S^{2n-1}$. Using wrapped Floer homology and index-positivity of $\Lambda_{std}$, they proved that $L$ must necessarily be a homology ball and in the more restrictive case of real Lagrangian filling, they proved it is diffeomorphic to a ball using  holomorphic foliations. In the paper, they remark that one can prove the more general case of $L$ being diffeomorphic to a ball in a similar fashion to \cite{chantrainefloer} if one can prove that the wrapped Floer homology of the universal covering of $L$ vanishes. We in fact follow their remark  by proving that the wrapped Floer homology of the universal cover vanishes. 

\begin{rem}
    Note that by the Eliashberg-Floer-Mcduff theorem (\cite{mcduff1991symplectic}), any exact filling of $(S^{2n-1},\xi_{std})$ is  diffeomorphic to a ball $B^{2n}$, however it might not be the standard symplectic ball ($B^{2n},d\lambda_{std}$).
\end{rem}

Let $\tilde{L}$ be a covering of a Lagrangian $L$ in a Liouville domain. We construct a lifted version of wrapped Floer homology denoted by $HW^{(0)}(\tilde{L})$ following the construction of Damian in the setting of closed Lagrangians (see \cite{damian2012floer}). We prove that the lifted wrapped Floer homology is a module over the wrapped Floer homology of the base $L$ by constructing lifted TQFT maps in wrapped Floer homology(Theorem \ref{module}). .

\begin{thm}[Theorem \ref{module}]\label{mod}

    The lifted wrapped Floer homology for a covering $\tilde{L}$ of $L$, $ HW^{(0)}(\tilde{L})$ forms a module over the the wrapped homology of the base $HW(L)$.

\end{thm}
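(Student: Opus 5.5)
The module structure will come from a lifted version of Ritter's TQFT half-pair-of-pants product. The approach is to follow Damian's construction for closed Lagrangians and run it at the chain level, cofinally in the slope.

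\textbf{Setup.} Fix a covering $p:\tilde L\to L$ and an admissible Hamiltonian $H$ of slope $\lambda$, quadratic or linear at infinity, not in the action spectrum.
\begin{enumerate}
\item The generators of the lifted complex $CW^{(0)}(\tilde L;H)$ are pairs $(x,\tilde x_0)$. Here $x$ is a Hamiltonian chord of $L$, and $\tilde x_0$ is a lift of its starting point to $\tilde L$.
\item Transporting $\tilde x_0$ along $x$ and a chosen capping data (equivalently, working with the universal-cover-type local system) gives a lifted endpoint $\tilde x_1$.
\item The differential counts Floer strips $u$ between chords whose boundary on $\{t=0\}$ lifts to a path in $\tilde L$ from $\tilde x_0$ to $\tilde y_0$, with the analogous condition on the other boundary.
\end{enumerate}
Equivalently, $CW^{(0)}(\tilde L)\cong CW(L;\mathcal{F})$ with coefficients in the local system $\mathcal{F}=p_*\underline{k}$ of fibrewise functions on $p^{-1}(pt)$. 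I will use whichever description is more convenient. The differential squares to zero because broken strips glue compatibly with boundary lifts. This is the standard Damian argument, assumed from the construction section.

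\textbf{The product.} Take Ritter's disc with two inputs and one output. I model it as the half-pair-of-pants: a disc with three boundary punctures, with weights so that $H_1\# H_2\le H_3$ at infinity. Holomorphic maps $u$ from this disc send the three boundary arcs to $L$. I define
\[
\mu: CW(L;H_1)\otimes CW^{(0)}(\tilde L;H_2)\to CW^{(0)}(\tilde L;H_3)
\]
as follows. For each $u$ and each lift of the arc adjacent to the $\tilde L$-input, lift the whole boundary of $u$ minus the $CW(L)$-input chord to $\tilde L$. This is possible because the boundary arcs only concatenate through chords whose lifts are determined. The output lift is then read off.

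The key point is that the $CW(L)$-input $x$ lies on a boundary segment where the lifting condition is vacuous. The boundary component passing through $x$ lifts uniquely once a lift at one end is fixed, because $\tilde L\to L$ is a covering. Hence no constraint is imposed on $x$, and $\mu$ is well defined with coefficients in $k$.

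\textbf{Chain map and continuation.} The chain-map property follows from the usual analysis of the $1$-dimensional moduli spaces. Their boundary consists of a strip breaking at one of the three ends, and lifting data propagates through each breaking.

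Compatibility with continuation maps as the slope $\lambda$ increases also needs checking. The continuation maps in the lifted theory are defined by the same lifting rule, and the standard homotopy-of-data argument lifts verbatim. Passing to the direct limit gives
\[
HW(L)\otimes HW^{(0)}(\tilde L)\to HW^{(0)}(\tilde L).
\]

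\textbf{Module axioms.}
\begin{itemize}
\item \emph{Associativity.} Degenerate the disc with three inputs in the two possible ways, and lift the boundary of the resulting $1$-parameter family. This gives a chain homotopy between $\mu(a,\mu(b,\cdot))$ and $\mu(\mu(a,b),\cdot)$, where $\mu(a,b)$ is Ritter's product on $HW(L)$.
\item \emph{Unitality.} Use the disc with one input and one output, capped by the unit of $HW(L)$, which is the image of the fundamental class from $H^*(L)$. Gluing shows that $\mu(1,\cdot)$ is the continuation map, which is the identity in the limit.
\end{itemize}

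\textbf{Main obstacle.} I expect the hardest step to be the well-definedness of the lifting on the product moduli spaces, together with compactness under the lifting. The compactness part is routine. The lifting part is where the main care is needed: one must check that the lift of every boundary component of the three-punctured disc is consistent, so that gluing and breaking respect lifts.

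I would first phrase everything via the local system $p_*\underline{k}$. The product is then the standard TQFT operation with one trivial-coefficient input and one local-system input. Consistency then reduces to parallel transport of $\mathcal F$ along the boundary of $u$, and this transport is automatically compatible with concatenation.
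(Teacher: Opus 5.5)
Your overall architecture matches the paper's. You use a TQFT operation with one lifted input and one unlifted $HW(L)$ input, derive the chain-map property from breaking of index-one curves, get associativity and the unit from gluing, and check compatibility with continuation. The gap is in the lifting rule, which is the real content of the construction.

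A Hamiltonian chord $x$ runs through $\hat W$, not $\hat L$, so a lift $\tilde x_0$ of $x(0)$ determines nothing at $x(1)$. A ``capping'' path in $L$ from $x(0)$ to $x(1)$, homotopic to $x$ in $W$, need not exist. When it does exist, it is ambiguous up to $\ker(\pi_1(L)\to\pi_1(W))$.

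For a strip $u$, consider the loop built from the $t=0$ boundary path, the caps and the $t=1$ boundary path. It is null-homotopic in $W$ but not necessarily in $L$, so your ``analogous condition on the other boundary'' is not implied by the $t=0$ condition. Imposing it selects a subset of strips that is not closed under breaking. A one-parameter family with trivial loop can break into pieces with loops $g$ and $g^{-1}$, $g\neq 1$, so $d^2=0$ and the Leibniz rule can fail.

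The two sides only agree under hypotheses such as $\pi_2(W,L)=0$ (the paper's Remark \ref{product with restrictions}). These hypotheses fail exactly in the intended application: there $W$ is a ball, so $\pi_1(W)=1$, and $\pi_1(L)$ is the unknown. For the same reason, you cannot ``lift the whole boundary of $u$ minus the $CW(L)$-input, concatenating through chords whose lifts are determined.'' The consistency of lifts on every boundary arc, which you name as the main obstacle, is false in general. It is not something that can be checked.

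The missing idea is to lift only one path: the boundary arc $\gamma_0$ joining the $t=0$ end of the lifted input to the $t=0$ end of the output. This is the arc not adjacent to the $HW(L)$ input. The two arcs adjacent to the $HW(L)$ input are never lifted, which is the real reason that input carries no constraint, and no chord is ever crossed. The output lift is the endpoint of the unique lift of $u(\gamma_0)$ starting at $\widetilde{x_1(0)}$.

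The Leibniz rule then follows from Damian's lemma: near a broken configuration, $u(\gamma_0)$ is homotopic to the concatenated broken path. One partitions the one-dimensional moduli space by the homotopy class of this single path and counts boundary points class by class. Gluing $=$ composition for this arc then gives associativity and the unit.

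The choice of slot matters. Placing the $HW(L)$ input at the other puncture lifts along the $t=1$ sides and gives a module structure on $HW^{(1)}(\tilde L)$, which need not coincide with $HW^{(0)}(\tilde L)$. Your local-system reformulation can be salvaged only if $p_*\underline{k}$ labels just the $t=0$ boundary arc, i.e. the complex is $CW$ between $(L,p_*\underline{k})$ and $(L,\underline{k})$. Parallel transport then happens along that single arc, which is exactly the paper's construction.
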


As a consequence the vanishing of the the wrapped Floer homology of $L$ implies the vanishing  of the lifted wrapped Floer homology of any cover $\tilde{L}$ (Corollary \ref{Vanishing WH}). 

Like the (non-lifted) positive wrapped Floer homology, the lifted positive wrapped Floer homology can be proven to be independent of the Lagrangian filling under suitable assumptions(see proposition \ref{adccover}).

In the case of a Maslov-zero exact Lagrangian filling $L$ of the standard Legendrian $\Lambda_{std}$ in $(S^{2n-1},\xi_{std})$, we can obtain information about $\pi_1(L)$  (Theorem \ref{funda1}). We thereby deduce the following theorem.

\begin{thm}[Corollary \ref{lagrangian filling proof}]\label{lagrangian filling}
    Let $W^{2n}$ be a Liouville filling of $(S^{2n-1},\xi_{std})$ and let $L$ be an exact Maslov-zero Lagrangian filling of the standard Legendrian $\Lambda_{std}$ . Then $L$ is diffeomorphic to the ball $B^n$ for $n\neq 4$ and homeomorphic to $B^n$ when $n=4$ .
\end{thm}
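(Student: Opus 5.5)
The plan is to reduce the statement to two topological facts about $L$: that $L$ is simply connected, and that $L$ is a homology ball. Then I would invoke the h-cobordism theorem, together with Freedman's theorem in dimension $4$. The homology ball statement is the result of Kim--Kwon \cite{kim2024topology}. Their argument goes as follows. The positive wrapped Floer homology $HW^+(L)$ depends only on $\Lambda_{std}$, by index-positivity of $\Lambda_{std}$ and the Maslov-zero hypothesis. In the standard ball it is computed from the real Lagrangian $B^n \subset B^{2n}$, whose wrapped Floer homology vanishes. Hence in the tautological long exact sequence $H^{*}(L)\to HW(L)\to HW^+(L)\to$ one finds $HW(L)=0$. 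Comparing with the standard filling then forces $H^*(L;k)\cong H^*(B^n;k)$ for every field $k$, so $L$ is a homology ball.

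For $\pi_1$, the plan is to use the lifted theory. First, since $HW(L)=0$, the module structure of Theorem~\ref{mod} gives $HW^{(0)}(\tilde L)=0$ for the universal cover $\tilde L$ (Corollary~\ref{Vanishing WH}). Second, I would use Proposition~\ref{adccover} to identify the lifted positive wrapped homology $HW^{+,(0)}(\tilde L)$ with a quantity depending only on $\Lambda_{std}$ and its lift. The boundary $\partial L=\Lambda_{std}\cong S^{n-1}$ is simply connected for $n\ge 3$, so its preimage in $\tilde L$ is $|\pi_1(L)|$ disjoint copies of $S^{n-1}$. Consequently $HW^{+,(0)}(\tilde L)$ should be a direct sum of $|\pi_1(L)|$ copies of the corresponding piece for the ball, with Reeb chords lifting into sheets indexed by $\pi_1(L)$. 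Third, I would feed this into the lifted tautological exact sequence
\[
H^*(\tilde L)\to HW^{(0)}(\tilde L)=0 \to HW^{+,(0)}(\tilde L)\to H^{*+1}(\tilde L).
\]
This gives $H^{*+1}(\tilde L)\cong HW^{+,(0)}(\tilde L)$, where cohomology is taken with compact support or relative to the boundary, matching Damian's conventions. Comparing the top degree then counts boundary components: $H^{n}(\tilde L,\partial\tilde L)$ or $H^0(\tilde L)$ has rank $1$ because $\tilde L$ is connected, while the positive side has rank $|\pi_1(L)|$. This forces $\pi_1(L)=1$; this is the content of Theorem~\ref{funda1}. Working with a finite field or $\mathbb{Z}/2$ sidesteps orientation issues.

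With $L$ simply connected and acyclic, I would remove a collar to get $L\setminus \mathrm{int}(B^n)$, a cobordism from $S^{n-1}$ to $\partial L\cong S^{n-1}$. It is simply connected by van Kampen, which needs $n\ge 3$, and it is an h-cobordism by excision and Whitehead's theorem. For $n\ge 6$ the h-cobordism theorem makes it a product, so $L\cong B^n$. For $n=5$, $\partial L=S^4$ and $L$ is a contractible $5$-manifold with boundary $S^4$; capping off gives a homotopy $5$-sphere, which is standard by Kervaire--Milnor, so $L\cong B^5$. For $n=4$, Freedman's theorem gives only a homeomorphism to $B^4$. For $n=3$, $L$ is a contractible $3$-manifold with boundary $S^2$, so Perelman (the Poincaré conjecture) gives $L\cong B^3$. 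The low cases $n\le 2$ are covered by Eliashberg--Polterovich and classical surface topology. For $n=2$ the van Kampen step is not needed, since an acyclic surface with circle boundary is already a disk.

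The main obstacle is the second step: making the lifted positive wrapped homology well defined and independent of the filling. The lift must be defined via Proposition~\ref{adccover} so that the count of boundary sheets really appears, and the grading and action filtrations must align so that the comparison in the exact sequence is rank-exact in a single degree. If a direct rank count is awkward, I would instead use the unit. The unit of $HW(L)$ maps to the fundamental class, and its lifted analogue in $H^0(\tilde L)$ must hit the diagonal element of $\bigoplus_{\pi_1(L)}$. That element can only generate the whole sum if the group is trivial.
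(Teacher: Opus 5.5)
Your proposal follows the paper's proof essentially step for step. You use Kim--Kwon for the homology-ball statement, get vanishing of $HW^{(0)}(\tilde L)$ from the module structure, compare $\tilde L$ via Proposition~\ref{adccover} with a trivial $|\pi_1(L)|$-sheeted cover of a filling in the standard ball, and count ranks against $H^0(\tilde L)\cong\mathbb{Z}$ in the lifted exact sequence (this is Theorem~\ref{funda1}); then you finish with h-cobordism, Kervaire--Milnor, Freedman and Perelman dimension by dimension. Taking the real ball $B^n\subset B^{2n}$ as the comparison filling is a harmless simplification of the paper's appeal to \cite{chantrainefloer} for $\pi_1(L_1)=1$.

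One correction to your sketch of Kim--Kwon: $HW(L)=0$ does not follow from filling-independence of $HW^+(L)$ and the tautological sequence alone. It comes from $SH(W)=0$ for every Liouville filling of $(S^{2n-1},\xi_{std})$, together with the $SH(W)$-module structure on $HW(L)$; the paper also uses $HW(L)=0$ in Theorem~\ref{funda1} without comment.
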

\begin{rem}
    We have been informed by Zhengyi Zhou that he and  Lite Du have independently proved theorem \ref{lagrangian filling} without the Maslov zero assumption, by a different method and are currently revising their draft (\cite{zhoudulagrangian}) for publication.
\end{rem}

\begin{rem}
    In fact the idea of the proof on proving $\pi_1$ is trivial (Theorem \ref{funda1}) using vanishing of lifted wrapped Floer homology can also be applied to Maslov-zero exact fillings of Legendrians that are not necessarily the standard Legendrian but have some restrictions on Conley-Zehnder indices. A particular example is given in Theorem \ref{funda}.
\end{rem}

In the closed setting, we also prove vanishing results for symplectic homology with $k[\pi_1]$ coefficients (corollory \ref{loccoeff}) as well as it's dual version in symplectic cohomology with $k[\pi_1]$ coefficients (theorem \ref{vansc}). We use these results to study topological restrictions on dynamical convexity and on fillings of ADC-contact manifolds. 

A contact form  $\alpha$ in a contact manifold $M$( with $c_1(M)$ torsion ) is said to be \textbf{dynamically convex} if the Conley-Zehnder index of all contractible non-degenerate Reeb orbits are bounded below by $n+1$. A contact manifold is dynamically convex if it admits a dynamically convex contact form. The condition of dynamical convexity was introduced in \cite{hofer1999characterization} and has been of importance in symplectic dynamics( for instance see \cite{ginzburg2021dynamical},\cite{chaidez20223d},\cite{dardennes2024symplectic},\cite{abreu2017multiplicity}). In \cite{shahul2026topologicalobstructionsdynamicalconvexity},we motivated studying the topology of dynamical convexity from the viewpoint of algebraic geometry using the results of Mclean\cite{mclean2016reeb} and a conjecture of Shokurov in algebraic geometry. 

Kwon-Oba \cite{kwon2024dynamically} proved that Weinstein fillings of dynamically convex contact manifolds  having vanishing symplectic homology are homology balls, and using h-cobordism when $\pi_1=0$,are  diffeomorphic to balls. We prove that we can obtain homotopical information of  fillings without the condition on  $\pi_1$ or being Weinstein  by using both symplectic homology and symplectic cohomology  of covering spaces. More precisely, we prove the following statement.

\begin{thm}[Theorem \ref{hom},\ref{homotopy}]\label{homotop}
    Let $M^{2n-1}$ be a dynamically convex contact manifold that admits an exact filling $W$ with vanishing symplectic homology, then $\pi_i(W)=0$ for $i>1$. If $W$ has a finite fundamental group, then $\pi_k(M)=0$ for $1<k<2n-1$.
\end{thm}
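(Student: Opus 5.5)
Let $\widetilde W$ denote the universal cover of $W$, with deck group $\pi=\pi_1(W)$ and boundary $\partial\widetilde W$, which covers $M$. The plan is to show that $\widetilde W$ has the homology of a point, and then to use Hurewicz and Whitehead.

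\textbf{Step 1: vanishing with local coefficients.} Since $SH_*(W)=0$, the unit vanishes. By the module structure (the closed analogue of Theorem \ref{mod}), symplectic homology with $k[\pi_1]$ coefficients also vanishes (Corollary \ref{loccoeff}). Dually, symplectic cohomology with $k[\pi_1]$ coefficients vanishes (Theorem \ref{vansc}). These are the Floer theories of $\widetilde W$. Applied to the lifted contractible sector, they should give $SH_*(\widetilde W)=0$ with compact-support/locally-finite variants, and likewise $SH^*(\widetilde W)=0$.

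\textbf{Step 2: the tautological long exact sequence on the cover.} We have an exact sequence
\[ \cdots\to H_{*+n}(\widetilde W,\partial\widetilde W)\to SH_*(\widetilde W)\to SH^+_*(\widetilde W)\to\cdots \]
in which the positive part is generated by lifts of contractible Reeb orbits of $M$. Here is where dynamical convexity enters: lifts of orbits that are contractible in $W$ have Conley--Zehnder index at least $n+1$. This places the positive complex in degrees $>1$ with the grading of Kwon--Oba. Vanishing of $SH$ then forces $H_*(\widetilde W,\partial\widetilde W)$ to vanish outside the top degree, or the appropriate range. Running the same argument for $SH^*$ with $k[\pi]$ coefficients handles the locally finite homology and cohomology.

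The main obstacle is the non-compactness of $\widetilde W$ when $\pi$ is infinite. One must pair the two theories correctly, using compactly supported cohomology for one and ordinary homology for the other, so that Poincar\'e--Lefschetz duality on $\widetilde W$ converts the relative vanishing into $\widetilde H_*(\widetilde W;k)=0$. I would handle this first by working equivariantly with $k[\pi]$-modules on the compact $W$, where duality is Poincar\'e duality with $k[\pi]$ coefficients. I would take $k=\mathbb Z$, or else all fields, to obtain integral acyclicity.

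\textbf{Step 3: homotopy of $W$.} Since $\widetilde W$ is simply connected and $\widetilde H_*(\widetilde W;\mathbb Z)=0$, Hurewicz and Whitehead give that $\widetilde W$ is contractible. Hence $\pi_i(W)=\pi_i(\widetilde W)=0$ for $i>1$.

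\textbf{Step 4: homotopy of $M$ when $\pi$ is finite.} If $\pi$ is finite, $\widetilde W$ is compact and contractible with boundary $\widetilde M=\partial\widetilde W$. By Lefschetz duality,
\[ H_*(\widetilde W,\widetilde M)\cong H^{2n-*}(\widetilde W), \]
which is concentrated in degree $2n$. The long exact sequence of the pair then shows that $\widetilde M$ is a homology sphere.

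Next, $\pi_1(\widetilde M)\to\pi_1(\widetilde W)=0$ is an isomorphism when $2n\ge 6$. This holds because $\widetilde W$ is built from $\widetilde M$ by handles of index at least 3 after dualising; alternatively, it follows from the index bound from Step 2, which rules out low-index handles. This gives a simply connected homology sphere, so $\widetilde M$ is a homotopy sphere.

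Therefore $\pi_k(M)=\pi_k(\widetilde M)=0$ for $1<k<2n-1$. The low-dimensional cases would be handled separately. If the $\pi_1$ injectivity argument fails, I would instead read off $\pi_k(\widetilde M)$ for $k$ below the middle dimension from the connectivity of the pair $(\widetilde W,\widetilde M)$. That connectivity is given by the vanishing of $H_*(\widetilde W,\widetilde M)$ in low degrees together with relative Hurewicz, and duality then covers the upper range.
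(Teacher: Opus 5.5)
\textbf{Gap 1: your Step 2 does not produce $\widetilde H_*(\widetilde W;k)=0$ when $\pi_1(W)$ is infinite.} The $SH_*$ sequence with $k[\pi_1]$ coefficients only gives $H_m(\widetilde W,\partial\widetilde W;k)=0$ for $m<2n$. Poincar\'e--Lefschetz duality, whether on $\widetilde W$ or on $W$ with $k[\pi_1]$ coefficients, turns this into vanishing of $H^{2n-m}_c(\widetilde W;k)$, not of $H_m(\widetilde W;k)$. Without control of $\partial\widetilde W$, the relative statement says nothing about the absolute one.

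What you are missing is that the cohomological theory already contains the absolute homology. $SC^*(W;k[\pi_1])$ is built with finite sums over lifts and the action-increasing differential. Its low-action part therefore computes the ordinary homology $H_*(\widetilde W;k)=H_*(W;k[\pi_1])$, not locally finite homology; this is Proposition \ref{lesloc2}. The paper argues as follows:
\begin{itemize}
\item $SH_*(W;k)=0$ gives $SH^*(W;k)=0$ by Theorem \ref{adual}.
\item This gives $SH^*(W;k[\pi_1])=0$ by Theorem \ref{vansc}. That theorem needs $k$ to be a field, so $k=\mathbb Z$ is not an option there.
\item Dynamical convexity kills $SH^i_+(W;k[\pi_1])$ for $i<n+1$.
\item Exactness in Proposition \ref{lesloc2} then gives $H_m(\widetilde W;k)=0$ for every $m>0$, with no duality at all.
\end{itemize}
Running over all fields and using universal coefficients gives integral acyclicity, even though $H_*(\widetilde W)$ may be infinitely generated. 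Your Step 3 then finishes as you wrote it.

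\textbf{Gap 2: in Step 4, your claim that $\pi_1(\partial\widetilde W)=1$ is unsupported.} An exact filling carries no handle-index constraint. The index bound from Step 2 is homological and rules out no handles. Compact contractible manifolds can have homology-sphere boundary with nontrivial $\pi_1$ (Mazur manifolds, and Kervaire's higher-dimensional examples). So contractibility plus Lefschetz duality cannot give it.

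The paper follows essentially your fallback:
\begin{itemize}
\item It gets $H_k(\widetilde W,\partial\widetilde W)=0$ for $k<2n$ from Corollary \ref{loccoeff} and dynamical convexity. Your derivation from contractibility works equally well here.
\item Hence $\partial\widetilde W$ is connected and $\pi_1(\widetilde W,\partial\widetilde W)=0$.
\item Relative Hurewicz then gives $\pi_k(\widetilde W,\partial\widetilde W)=0$ for all $k<2n$.
\item So $\pi_k(M)\cong\pi_k(\partial\widetilde W)\cong\pi_k(\widetilde W)=0$ for $1<k<2n-1$. No separate duality is needed for the upper range.
\end{itemize}
Be aware, though, that the same examples show relative Hurewicz only controls $\pi_k(\widetilde W,\partial\widetilde W)$ modulo the $\pi_1(\partial\widetilde W)$-action. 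So this step, in the paper and in your fallback, also needs $\pi_1(\partial\widetilde W)=1$. The natural source is injectivity of $\pi_1(M)\to\pi_1(W)$ (topological simplicity), which makes $\partial\widetilde W$ the universal cover of $M$. That same injectivity is also what justifies the claim, in your Step 2 and in the paper, that orbits contractible in $W$ have index at least $n+1$. Dynamical convexity by itself only constrains orbits contractible in $M$.
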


Note that all examples of dynamically convex manifolds known to the author are either hypertight contact manifolds or quotients of the standard sphere $S^{2n-1}$. In the latter case, one has $\pi_i(M)=0$ for $1<i<2n-1$ and $\pi_1$ is finite . For hypertight contact manifolds, note that $\pi_1(M)$ is necessarily infinite since otherwise one could concatenate the orbits to have a contractible loop. 

Another application of using both symplectic homology and symplectic cohomology of a filling $W$ is that we can gather information on Reeb orbits from the topology of $M$ without the more restrictive assumption of dynamical convexity.

\begin{thm}[Theorem \ref{cz}]\label{czi}
    Let $M^{2n-1}$ be a contact manifold such that it admits an exact filling $W$ with $SH_*=0$. If there exists a non-degenerate contact form $\alpha$ on $M$ with no Reeb orbits of Conley Zehnder index $2+k$ and $1-k$, then $H_{k+n}(\partial W)=0$ and $H_{n-k-1}(\partial W,\mathbb{Z})=0$.
\end{thm}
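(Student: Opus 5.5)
The plan is to run the Viterbo tautological long exact sequence for $W$ in both homology and cohomology. Since $SH_*(W)=0$, the positive symplectic homology is identified with the shifted homology of the filling:
\[
SH^+_{*}(W)\cong H_{*+n-1}(W,\partial W).
\]
The grading convention is the usual one, in which $H_*(W,\partial W)$ sits in degrees $*-n$ and the map to $SH_*$ has degree shift $n$. Dually, vanishing of $SH_*$ forces vanishing of symplectic cohomology by the universal coefficient relation over a field, or over $\mathbb{Z}$ after taking the chain-level dual. Hence $SH^*_+(W)\cong H^{*+1-n}(W)$, suitably shifted.

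Next I compute $SH^+$ with the non-degenerate form $\alpha$. Positive symplectic (co)homology is generated, for a suitably perturbed Hamiltonian, by two generators per Reeb orbit of $\alpha$: a minimum and a maximum of the Morse function on the $S^1$-family. Their degrees are $\mu_{CZ}(\gamma)$ and $\mu_{CZ}(\gamma)+1$, up to the normalization shift of $n$ that I will fix against the Viterbo sequence. If no Reeb orbit has index $2+k$ or $1-k$, then the chain complex $SC^+$ has no generators in a short window of consecutive degrees. The orbit of index $2+k$ contributes generators in degrees $2+k$ and $3+k$, and the orbit of index $1-k$ contributes in degrees $1-k$ and $2-k$. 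I would choose the two missing indices so that exactly one degree in $SH^+_*$ and one degree in $SH^*_+$ receive no generators from any orbit. Since all degrees are now taken with respect to a trivialization of $c_1$, I assume $c_1(W)$ is torsion (or that $\alpha$ is graded), as implicitly in the statement.

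Vanishing of the corresponding groups then gives $H_{k+n+1}(W,\partial W)=0$ from the homology side and the analogous $H^{n-k}(W)=0$ from the cohomology side. Lefschetz duality turns these into statements about the absolute groups: $H_{k+n+1}(W,\partial W)\cong H^{n-k-1}(W)$ and $H^{n-k}(W)\cong H_{n+k}(W,\partial W)$. Next I need $H_*(W)$ and $H_*(W,\partial W)$ themselves. Here I use that $SH_*=0$ means the unit vanishes, so the map $H^*(W)\to SH^*$ from the Viterbo sequence kills the unit. Combined with the degree windows, this gives vanishing of the relevant groups of $W$ in both the relative and absolute versions. Feeding both into the long exact sequence of the pair,
\[
\cdots\to H_{j+1}(W,\partial W)\to H_j(\partial W)\to H_j(W)\to H_j(W,\partial W)\to\cdots,
\]
yields $H_{k+n}(\partial W)=0$ and $H_{n-k-1}(\partial W)=0$. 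The two indices $2+k$ and $1-k$ are placed symmetrically about $n$ under the duality $j\mapsto 2n-1-j$ on $\partial W$, which is what the two conclusions reflect.

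The main obstacle is the bookkeeping. I have to pin down precisely which degree shifts in the Viterbo sequence and the orbit-generator degrees make a single missing index kill exactly the two needed groups: one absolute group and one relative group in the same degree $j$, so that $H_j(\partial W)$ is squeezed to zero. I also have to check the $\mathbb{Z}$ versus field coefficient issue on the cohomology side, where the chain-level dualization introduces Ext terms. To control these I would first test the degrees on the example $W=B^{2n}$ with the standard ellipsoid form. There $SH_*=0$ and the Conley–Zehnder indices are known, so the correct normalization of the shifts can be read off and then reused in the general argument.
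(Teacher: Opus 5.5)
Your overall plan (the Viterbo sequence for $SH_*$, its dual for $SH^*$, then the sequence of the pair $(W,\partial W)$) is the paper's, but the step that closes the argument is missing.

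To kill $H_j(\partial W)$ for $j=n+k$ and $j=n-k-1$, you need, for each such $j$, both $H_{j+1}(W,\partial W)=0$ and the \emph{absolute} group $H_j(W)=0$. Your homology side gives the relative groups correctly: $SH^+_{2+k}\cong H_{n+k+1}(W,\partial W)$ and $SH^+_{1-k}\cong H_{n-k}(W,\partial W)$. You then use Lefschetz duality to pass to absolute \emph{cohomology}, which is just relative homology again. After that you try to recover $H_*(W)$ from the vanishing of the unit. That does not work. The unit dying is exactly the input $SH_*=0$ you already used to identify $SH^+_*$ with $H_{*+n-1}(W,\partial W)$, and it carries no further information on $H_{n+k}(W)$ or $H_{n-k-1}(W)$. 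Moreover, the group you extract, $H^{n-k}(W)\cong H_{n+k}(W,\partial W)\cong SH^+_{k+1}$, is attached to index $k+1$, not to either excluded index.

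In the paper the absolute groups come from Proposition \ref{les2}, whose low-energy term is $H_*(W)$. Over a field, $SH^*(W;k)=0$ by algebraic duality, so $SH^{j}_+(W;k)\cong H_{n+1-j}(W;k)$. Index $1-k$ then gives $H_{n+k}(W;k)=0$, and index $2+k$ gives $H_{n-k-1}(W;k)=0$. Finite generation of $H_*(W;\mathbb{Z})$ yields the integral statement, so your Ext worry never arises.

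Within your own set-up one more step would have sufficed. Lefschetz duality turns $H_{n+k+1}(W,\partial W)=0$ and $H_{n-k}(W,\partial W)=0$ into $H^{n-k-1}(W)=0$ and $H^{n+k}(W)=0$. Over a field $H^m(W;k)\cong\mathrm{Hom}(H_m(W;k),k)$, so universal coefficients (not Lefschetz) turns these into the needed absolute vanishing. Over a field, the cohomological sequence is just the dual of the homological one.

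Second, your generator count does not let the stated hypothesis do its job. With two generators per orbit, in degrees $\mu$ and $\mu+1$, degree $2+k$ of $SC^+$ still receives generators from orbits of index $1+k$. Likewise degree $1-k$ receives generators from orbits of index $-k$. The hypothesis does not exclude these in general. The indices $2+k$ and $1-k$ are given, so you cannot choose them to empty a degree. Under your count, then, $SH^+_{2+k}$, $SH^+_{1-k}$ and their cohomological counterparts are not forced to vanish.

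The paper's proof uses the hypothesis only through the assertion that $SH^+_j$ and $SH^j_+$ are generated by orbits of index exactly $j$. With your two-generator model that assertion is not available. To get the vanishing from generators alone, you would have to exclude the adjacent indices as well (e.g.\ $1+k$ and $-k$ in your normalization).
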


We show that in addition to working with symplectic homology of universal covers, symplectic cohomology can give us additional information by strengthening a result of Zhou where he uses symplectic homology of the cover. In \cite{zhou2021symplectic} Theorem E, Zhou proves that given a topologically simple exact filling $W$ of an ADC-contact manifold  $M$  with vanishing symplectic homology, if the map $\pi_1(M)\rightarrow \pi_1(W)$ induced by inclusion is an isomorphism, then for every topologically simple exact filling $W'$, $\pi_1(M)\rightarrow \pi_1(W')$ is an isomorphism. We can generalize the above result to $\pi_2$ as a cute application of working with the symplectic cohomology of the universal cover.

\begin{defn}\label{adc defn}
    A contact manifold $(M^{2n},\xi)$ with $c_1(M,\mathbb{Q})=0$ is asymptotically dynamically convex if there exists a sequence of contact forms $\alpha_1>\alpha_2>...$ and increasing positive numbers $D_i$ that tends to infinity such that $\mu_{CZ}(\gamma)>3-n$ for any $\gamma $, a non-degenerate Reeb orbits of $\alpha_i$ with action bounded above by $D_i$
\end{defn}
\begin{rem}
    ADC(Asymptotically dynamically convex) (see definition \ref{adc defn}) contact manifolds were introduced by Lazarev(\cite{lazarev2020contact}) that generalize index positive contact manifolds where one can derive independence of Floer theoretic invariants like positive symplectic homology from its filling. The class of manifolds is rich enough for many applications (for instance see \cite{bowden2022tight},\cite{zhou2021symplectic},\cite{Zhou_2024})
\end{rem}

\begin{thm}[Theorem \ref{zh}]\label{zhou}
    Let $M$ be an ADC contact manifold with a topologically simple exact filling $W$ such that $SH_*(W)=0$ and $\pi_i(M)\rightarrow \pi_i(W)$ is an isomorphism for $i=1,2$. Then for any other topologically simple exact filling $W'$, $\pi_i(M)\rightarrow \pi_i(W')$ is an isomorphism for $i=1,2$.
\end{thm}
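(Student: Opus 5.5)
The plan is to follow Zhou's argument for Theorem E in \cite{zhou2021symplectic} and add one extra ingredient, symplectic cohomology of the universal cover, to handle $\pi_2$. The case $i=1$ is Zhou's theorem, which we take as given: $\pi_1(M)\to\pi_1(W')$ is an isomorphism. Hence the universal cover $\widetilde{W'}$ restricts on the boundary to the universal cover $\widetilde{M}$, and the same holds for $\widetilde{W}$. By Hurewicz, $\pi_2(M)\cong H_2(\widetilde M)$, $\pi_2(W)\cong H_2(\widetilde W)$ and $\pi_2(W')\cong H_2(\widetilde{W'})$. It therefore suffices to show that $H_2(\widetilde M)\to H_2(\widetilde{W'})$ is an isomorphism.

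The first step transfers vanishing to the covers. Since $SH_*(W)=0$, Corollary \ref{loccoeff} gives vanishing of symplectic homology of $W$ with $k[\pi_1]$ coefficients, i.e.\ of the universal cover. Theorem \ref{vansc} gives the same for the dual symplectic cohomology. The tautological long exact sequences
\[ \cdots\to H^*(\widetilde W)\to SH^*(\widetilde W)\to SH^*_+(\widetilde W)\to\cdots \]
and the homology analogue then identify the lifted positive symplectic (co)homology with ordinary (co)homology of $\widetilde W$. On the homology side this is locally finite or compactly supported homology; on the cohomology side it is ordinary cohomology.

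The second step uses the ADC hypothesis together with topological simplicity of both fillings ($\pi_1(M)\to\pi_1(W')$ injective). By Proposition \ref{adccover}, or its closed-string analogue, the lifted positive symplectic (co)homology is independent of the filling in the relevant degrees. Running the long exact sequences for $W'$ and comparing them with those for $W$ via the filling-independence isomorphisms, we get a commuting diagram with the boundary restriction maps. Zhou's argument with $k[\pi_1]$ coefficients yields the statement on $H_1$; the extra information is degree-$2$ control via $SH^*$ of the cover. The target is to show that $H^2(\widetilde{W'})\to H^2(\widetilde M)$ and $H_2(\widetilde M)\to H_2(\widetilde{W'})$ match the corresponding maps for $W$. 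Those maps are isomorphisms because $\pi_2(M)\cong\pi_2(W)$. Combining the two should give injectivity and surjectivity on $H_2$ over any field. Working over $\mathbb{Z}$ or with all fields, a universal coefficients argument would then give the isomorphism on $\pi_2$.

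The main obstacle is the degree bookkeeping in this last step. The ADC condition $\mu_{CZ}>3-n$ only guarantees filling-independence of the positive part in a limited range of degrees, so one must check that the degrees corresponding to $H^2$ and $H_2$ (after the shift by $n$) fall in that range. One must also check that the unit and restriction maps are compatible with the identification $SH_+(\widetilde W)\cong SH_+(\widetilde{W'})$. I would first try to prove that compatibility by a neck-stretching argument applied to the lifted cobordism maps, exactly as in Zhou's non-lifted proof. I would also verify that the cohomological version really needs only the index bound $3-n$; that bound is the reason $i=2$ is reachable but not higher $i$.
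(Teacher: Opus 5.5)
Your route is the paper's. You use Zhou's Theorem E for $\pi_1$ (so $\partial\widetilde{W'}=\widetilde M$), and Hurewicz to trade $\pi_2$ for $H_2$ of universal covers. You get vanishing with $k[\pi_1]$ coefficients from Corollary \ref{loccoeff}, Theorem \ref{adual} and Theorem \ref{vansc}. You need filling-independence of the lifted boundary map; this is Proposition \ref{zhouprop}, the dual of Zhou's Proposition 3.16 in \cite{zhou2021symplectic}, i.e.\ the neck-stretching compatibility you plan to prove. Universal coefficients finish the argument.

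\textbf{The gap.} The step meant to give \emph{surjectivity} on $H_2$ fails as you set it up. You transfer vanishing only to the cover of $W$ and then propose to combine the homology-side and cohomology-side comparisons. Filling-independence says the composite $H_2(\widetilde M;k)\to H_2(\widetilde{W'};k)\to SH_+(W',k[\pi_1])$ agrees with the one for $W$, which is an isomorphism. This only gives injectivity of $i_*\colon H_2(\widetilde M;k)\to H_2(\widetilde{W'};k)$. In fact $\mathrm{coker}(i_*)$ is isomorphic to the kernel of $H_2(\widetilde{W'};k)\to SH_+(W',k[\pi_1])$. The homology-side composite at best gives surjectivity of $H^2(\widetilde{W'};k)\to H^2(\widetilde M;k)$. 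Over a field $H^2(\,\cdot\,;k)=\mathrm{Hom}_k(H_2(\,\cdot\,;k),k)$, so that is the same injectivity statement again. Combining the two sides therefore adds nothing.

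\textbf{What is missing.} You need vanishing for $W'$ itself. For ADC boundaries, vanishing of $SH$ is independent of the topologically simple filling (Zhou \cite{zhou2021symplectic}). It follows from filling-independence of the boundary map applied to the unit, which is the ``unit compatibility'' you list only as something to check. With $SH(W')=0$, apply Theorem \ref{adual} and Theorem \ref{vansc} to $W'$. The sequence of Proposition \ref{lesloc2} then shows the kernel of $H_2(\widetilde{W'};k)\to SH_+(W',k[\pi_1])$ vanishes, so $i_*$ is an isomorphism over every field. The cohomology side alone suffices. This is exactly what the paper uses when it writes $H_{*+n+1}(\widetilde{W'})\cong SH^{-*}_+(W',k[\pi_1(W')])$.

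\textbf{A side remark.} Your explanation of why the result stops at $i=2$ is off. The ADC index bound is not the limitation: Proposition \ref{zhouprop} is used in all degrees (compare Theorem \ref{higherhomproof}). The restriction comes from the identification $\pi_2(X)\cong H_2(\widetilde X)$, which has no direct analogue in higher degrees. Passing through relative Hurewicz and the long exact sequences instead is what costs two degrees in Theorem \ref{higherhomproof}.
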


For higher $\pi_i$, we have a weaker version of the above theorem.
\begin{thm}[Theorem \ref{higherhomproof}]\label{higherhom}
    Let $M$ be an ADC contact manifold with a topologically simple exact filling $W$ such that $SH_*(W)=0$ and $\pi_i(M)\rightarrow \pi_i(W)$ is an isomorphism for $i\leq k$. Then for any other topologically simple exact filling $W'$, $\pi_i(M)\rightarrow \pi_i(W')$ is an isomorphism for $i\leq k-2$.
\end{thm}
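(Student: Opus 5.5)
We follow the strategy behind Theorem \ref{zhou}, replacing the low-degree homotopy information by a connectivity argument on universal covers. Let $\tilde W$, $\tilde W'$ denote the universal covers and $\tilde M\subset\partial\tilde W$ the induced cover of the boundary. By the hypothesis for $i=1$ (and Zhou's Theorem E for $W'$), $\pi_1(M)\to\pi_1(W')$ is an isomorphism. Hence $\partial\tilde W'$ and $\partial \tilde W$ are both copies of the universal cover $\tilde M$ of $M$.

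\textbf{Step 1: vanishing on the cover of $W$.} Since $SH_*(W)=0$, the module structure (the closed-string analogue of Theorem \ref{mod}, Corollary \ref{loccoeff} and Theorem \ref{vansc}) gives vanishing of the lifted symplectic homology and cohomology of $\tilde W$. Equivalently, $SH_*(W;k[\pi_1])=0$ and $SH^*(W;k[\pi_1])=0$.

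\textbf{Step 2: transfer to $W'$.} The ADC hypothesis and Proposition \ref{adccover} give that lifted positive symplectic (co)homology depends only on $\tilde M$ with its lifted contact form. Topological simplicity ensures that the relevant gradings and covers agree. The tautological long exact sequences are
\[
\cdots\to H_{*+n}(\tilde W)\to SH_*(\tilde W)\to SH^+_*(\tilde W)\to\cdots,
\]
and the cohomological counterpart with $H^*_c$ or compactly supported versions. From Step 1, $SH^+_*$ of the cover of $W$ is isomorphic to a shift of $H_*(\tilde W)$. We then compare the $SH^+$ sequences for $W$ and $W'$. The point is to show that the map $H_*(\tilde M)\to H_*(\tilde W')$ agrees with $H_*(\tilde M)\to H_*(\tilde W)$ in a range of degrees. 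This matching should come from the fact that both maps factor through the boundary maps of $SH^+$ to the constant-orbit part; here one uses the Viterbo restriction or unit-compatibility.

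\textbf{Step 3: from homology to homotopy.} Since $\pi_i(M)\to\pi_i(W)$ is an isomorphism for $i\le k$, the pair $(W,M)$ is $k$-connected after passing to universal covers. By relative Hurewicz, $H_i(\tilde W,\tilde M)=0$ for $i\le k$. Step 2 then gives $H_i(\tilde W',\tilde M)=0$, but only for $i\le k-1$ or so. The loss comes from the degree shift and the extra surjectivity needed at the top degree of the exact sequence comparison. Since the pair $(\tilde W',\tilde M)$ is simply connected (both spaces are simply connected and $\pi_1$ agrees), relative Hurewicz shows that $(W',M)$ is $(k-1)$-connected. Consequently $\pi_i(M)\to\pi_i(W')$ is an isomorphism for $i\le k-2$ and surjective in degree $k-1$, which is the statement.

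\textbf{Main obstacle.} The hardest step is Step 2: making the degree bookkeeping precise so that vanishing of both $SH_*$ and $SH^*$ of the cover pins down $H_i(\tilde W',\tilde M)$ in the full range $i\le k-1$. The input for $W$ only controls the image of $H_*(\tilde M)$, and ADC invariance holds only in a degree window. I would first try to use the unit, i.e. $1\mapsto 0$ in $SH(\tilde W')$, which follows since $SH_*(W')=0$ by invariance of $SH^+$ plus the unit argument, as in Zhou. This would give both $SH_*(\tilde W')=0$ and $SH^*(\tilde W')=0$. Then $H_*(\tilde W')$ is identified with $SH^+$, hence with $H_*(\tilde W)$ compatibly with the boundary maps. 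Degree discrepancies at the ends of the range account for the loss from $k$ to $k-2$.
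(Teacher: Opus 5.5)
Your architecture matches the paper's: lifted vanishing from $SH_*(W)=0$; a filling-independence statement that transfers information about $H_*(\tilde M)\to H_*(\tilde W)$ to $H_*(\tilde M)\to H_*(\tilde W')$, with $\partial\tilde W'=\tilde M$ via Zhou's Theorem E; then relative Hurewicz and the exact sequences of the pairs. Your Step 3 is fine.

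The gap is Step 2. It is the only non-formal step, and you leave it at ``should come from''. The tools you name do not supply it:
\begin{itemize}
\item Proposition \ref{adccover} concerns wrapped Floer homology of Legendrian fillings, not closed symplectic (co)homology.
\item Invariance of lifted $SH_+$, even combined with $SH_*(W')=0$, yields only an \emph{abstract} isomorphism $H_*(\tilde W')\cong SH_+\cong H_*(\tilde W)$. That is useless for the exact sequence of $(\tilde W',\tilde M)$: you need the \emph{inclusion-induced} maps out of $H_*(\tilde M)$ to correspond.
\item Viterbo restriction cannot give this, since there is no restriction map between two unrelated fillings $W$ and $W'$.
\item Unit-compatibility controls only the class $1$, not the map in a range of degrees.
\end{itemize}

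The missing input is Zhou's filling-independence of the boundary map on covers, in the dual form the paper uses as Proposition \ref{zhouprop}. Over a field $\mathbb{F}$, the composite $H_{*+n+1}(\partial\tilde W;\mathbb{F})\to H_{*+n+1}(\tilde W;\mathbb{F})\to SH^{-*}_+(W;\mathbb{F}[\pi_1])$ depends only on the boundary cover. Zhou proves this by confining the relevant curves to the cylindrical end using the ADC index bounds, not via any map between fillings. By Theorem \ref{vansc} and Proposition \ref{lesloc2}, the second arrow is an isomorphism for $W$, and also for $W'$ once you know $SH_*(W')=0$. Hence $H_*(\tilde M)\to H_*(\tilde W')$ is $H_*(\tilde M)\to H_*(\tilde W)$ followed by an isomorphism, so it is an isomorphism for $i\le k-1$.

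Three further corrections:
\begin{itemize}
\item This is why the paper uses the $\mathbb{F}[\pi_1]$-twisted symplectic \emph{cohomology}: its constant part is $H_*(\tilde W)$. The homological sequence you display has $H^*(\tilde W)$ in that slot (or $H^*_c(\tilde W)$ with $\mathbb{Z}[\pi_1]$-coefficients), not $H_{*+n}(\tilde W)$. Zhou's homological version can still be used, but you must then dualize over $\mathbb{F}$.
\item The transfer preserves degrees exactly, so the drop from $k$ to $k-2$ has nothing to do with degree discrepancies or an ADC window. One degree is lost going from $H_i(\tilde W,\tilde M)=0$ for $i\le k$ to the inclusion being an isomorphism for $i\le k-1$. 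Another is lost going from $\pi_i(\tilde W',\tilde M)=0$ for $i\le k-1$ to $\pi_i(\tilde M)\cong\pi_i(\tilde W')$ for $i\le k-2$.
\item The transfer holds over fields, while relative Hurewicz needs integral homology, so you need a universal-coefficients step. Vanishing with all field coefficients in degrees $\le m$ only forces integral vanishing in degrees $\le m-1$. To land on $i\le k-1$ integrally, carry along the surjectivity of $H_k(\tilde M)\to H_k(\tilde W)$, which transfers as well.
\end{itemize}
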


Note that the above theorems gives obstructions to Weinstein fillability. Given a Weinstein filling $W^{2n}$ of $M$, the map $\pi_i(M)\rightarrow \pi_i(W)$ is an isomorphism for $i\leq n-1$ and surjective  at $i=n$, since there are no handles of index greater than $n$. Note that Weinstein fillings are topologically simple if $n\geq 3$. Therefore if $M$ admits a topologically simple filling $V$ with vanishing $SH$ such that $\pi_i(M)\rightarrow\pi_i(V)$ is not an isomorphism for $i\leq n-3$, then it cannot admit Weinstein fillings of vanishing $SH$. For instance, it cannot admit flexible Weinstein fillings in this case (Note flexibility implies vanishing $SH$ and ADC).

    \begin{rem}
    One could ask if the vanishing results of lifted Floer theory in theorem \ref{Vanishing WH} and theorem \ref{loccoeff} still hold for arbitrary local coefficients (on the manifold $M$ in the closed case or the Lagrangian $L$ in the open case). We can use the above theorems to prove this is the case with local as well as DG local coefficients on $M$ or $L$, which will appear as part of future work.
\end{rem}

\subsection{Organization of the paper} In section \ref{2}, we review symplectic homology and its dual symplectic cohomology followed by proving the vanishing results of symplectic homology (theorem  and symplectic cohomology with $k[\pi_1]$ coefficients  in section \ref{2.1} and section \ref{2.2} respectively). In section \ref{3}, we review wrapped Floer homology followed by construction of lifted wrapped Floer homology and lifted TQFT maps to prove a vanishing result in this case (corollary \ref{Vanishing WH}). In section \ref{4}, we use the previous the results for applications. We prove theorem \ref{homotop} on dynamical convexity in section \ref{4.1}, we prove theorem \ref{czi} on Conley-Zehnder indices on section \ref{4.2}, theorem \ref{lagrangian filling} on Lagrangian fillings in section \ref{4.3} and  theorem\ref{zhou} on ADC contact manifolds in section \ref{4.4}

\subsection{Acknowledgments}
I would like to thank my advisors Jean François Barraud and Jean Gutt for their advice and constant support. I would also like to thank Zhengyi Zhou for his kind and valuable comments as well as pointing out an issue on a early draft. I would  like to acknowledge the financial support provided by EUR-MINT(ANR-18-EURE-0023) at the Institut de Mathématiques de Toulouse.

\subsection{AI declaration} The author would like to state that no generative AI tools were used in this work.

\section{Symplectic Homology and Cohomology }\label{2}
In this section we recall some basics about symplectic homology and cohomology of a Liouville domain $W$ which was first introduced by Viterbo(\cite{viterbo1999functors}) . We then proceed by symplectic homology and cohomology of the covering space $\tilde{W}$. Afterwards we show various vanishing results of symplectic (co)homology twisted by $\mathbb{Z}[\pi_1]$ coefficients leveraging the use of product structures. We follow grading and sign  conventions  as in \cite{lazarev2020contact},\cite{cieliebak2010rabinowitz} which we also recommend as a more detailed introduction to symplectic homology.

Let $\hat{W}:=W\bigsqcup_{\partial W} [1,\infty)\times \partial W $ be the Liouville completion of a Liouville domain $W$ obtained by attaching the positive part of the symplectization $[1,\infty)\times \partial W$ to $W$ at $\partial W$. We extend the Liouville form $\lambda $ on $W$ to the completion $\hat{W}$ by $r\lambda|_{\partial W}$ on $[1,\infty)\times \partial W$ where $r$ denotes the cylindrical coordinate.

A Hamiltonian $H$ on $\hat{W}$ is is called \textbf{admissible} if $H$ is $C^2-$small in $W$ and $H=sr+c$ in $\hat{W}\textbackslash W$, where $s$ denotes the slope and $c\in \mathbb{R}$. The set of admissible Hamiltonians is denoted by $H_{adm}$.
We call a  class $J_{adm}$ on $\hat{W}$ of  almost complex structures admissible if for $J\in J_{adm}$, we have $J\ker\alpha=\ker\alpha$, $J|\ker\alpha$  is a compatible almost complex structure on the bundle $\ker \alpha$, and $J$ is independent of $r$, $J(r\partial_r)=R_\alpha$ on $\hat{W} \textbackslash W $ and $J$ is compatible with the symplectic form on $\hat{W}$.

For our choice of $H\in H_{adm}$, the periodic trajectories $\mathcal{P}(H)$of the Hamiltonian vector field $X_H$ correspond to Morse critical points on $W$ and Reeb orbits on a level $r$ in $\hat{W}/W$ of period $s$ since $X_H=sR_\alpha.$ The Hamiltonian orbits are degenerate in the cylindrical end and we  consider a small time-dependent perturbation of $H$, so that each $S^1$ family degenerates into two orbits\cite{bourgeois2009exact}.

Given $H\in H_{adm}$ and $J\in J_{adm}$, we define $SC(W,H,J):=\bigoplus _{\gamma\in \mathcal{P}(H)}{\mathbb{Z}}\gamma$ where the Hamiltonian is considered after a small-time dependent perturbation. When $c_1(W,\mathbb{Q})=0,$ we can associate a rational grading by $\mu^\mathbb{Q}_{CZ}$ to contractible orbits in $\mathcal{P}(H)$ and therefore to $SC(W,H,J)$ (cf\cite{mclean2016reeb}).

After a small-time dependent perturbation of $(H,J)$, the moduli space of Floer trajectories connecting two orbits  $x_-,x_+$,denoted by $$\mathcal{M}(x_+,x_-,H,J):=\{u:\mathbb{R}\times S^1\rightarrow \hat{W};\lim_{s\rightarrow\pm\infty}u(s,.)=x_\pm,\partial_su+J\partial_tu-JX_H=0\}/\mathbb{R}$$ is a manifold of dimension $\mu_{CZ}(x_-)-\mu_{CZ}(x_+)-1$. Using a maximum principle, it can be shown that these Floer trajectories do not escape to infinity and $\mathcal{M}(x_+,x_-,H,J)$ can be compactified using broken trajectories.
$\mathcal{M}(x_+,x_-,H,J)/\mathbb{R}$  is therefore  a compact  $0-$dimensional manifold when $\mu_{CZ}(x_+)-\mu_{CZ}(x_-)-1=0$.

We  define a differential $d:SC_{k}\rightarrow SC_{k-1}$  such that $$d x_+=\sum_{x_-\in SC_{k-1}}|\mathcal{M}(x_+,x_-,H,J)|x_-.$$   After considering $1-$dimensional moduli spaces and their breakings one can show that $d\circ d=0$ and obtain the homology group of the chain complex which we denote by $SH_*(W,H,J)$.

Note that we define the differential $d:SC_{k}\rightarrow SC_{k-1}$ by $d x_+=\sum_{x_-\in SC_{k-1}}|\mathcal{M}(x_+,x_-,H,J)|x_-$ where we look at the rigid trajectories that flow into an orbit $x_+$. We can also define a differential $\partial$ by counting trajectories that flow out of $x_+.$ More explicitly we can define $\partial:SC^{k-1}\rightarrow SC^{k}$ by $d x_-=\bigoplus_{x_-\in SC_{k-1}}|\mathcal{M}(x_+,x_-,H,J)|x_+$.(Here $SC^k$ consists of orbits of index $k$ just like $SC_k$ but we write with the cohomological notation since the differential $\partial$ increases grading).

The same analysis of the moduli spaces in $SH_*(W,H,J)$ implies that $\partial\circ\partial=0$, and we denote homology of the cochain complex $SC^*$ by $SC^*(W,H,J)$

When slope of $H_1$ is less than the slope of $H_2$, there exists a continuation map $SH_*(W,H_1,J_1)\rightarrow SH_*(W,H_2,J_2)$ and these maps form a directed system. Dually we get an inverse system $SH^*(W,H_2,J_1)\rightarrow SH^*(W,H_1,J_2)$ in cohomology.
We define the \textbf{symplectic homology} $SH_*(W)$ of W to be direct limit $\underset{slope(H)}{\varinjlim} SH_*(W,H,J)$ and \textbf{symplectic cohomology} $SH^*(W)$ to be the inverse limit $\underset{slope(H)}{\varprojlim} SH^*(W,H,J)$ .

There exists a canonical filtration on symplectic homology and cohomology induced by the action functional $\mathcal{A}_H$ for a small-time dependent perturbation of $H\in H_{adm}.$ Given $\gamma \in\mathcal{L}W$,  the action functional is defined by $\mathcal{A}_H(\gamma)=\int_{S^1}\gamma^*\lambda-\int_{S^1}H(\gamma(t))dt$. 

Note that, following the convention  in \cite{lazarev2020contact}, for symplectic homology we consider the positive gradient flow in Floer trajectories which increases the action and therefore the differential decreases the action. This  implies the existence of  a well-defined filtered subcomplex $SC_*^{<k}(W,H,J)$ of $SC_*(W,H,J)$ whose homology is denoted by  $SH_*^{<k}(W,H,J)$. Taking direct limits we can define the filtered symplectic homology groups $SH_*(W)^{<k}:=\underset{slope(H)}{\varinjlim} SH_*(W,H,J)^{<k}$. 

For a small time dependent perturbation of $H$ and $\epsilon>0$ small enough, we have that all the Hamiltonian orbits with action less than $\epsilon$ of $\hat{W}$ correspond to the Morse critical points in $W$ as $H$ is $C^2-$small. That is $SC^{<\epsilon}(W,H,J)$ is the Morse complex with a grading shift. The subcomplex $SC_*^{+,<\epsilon}:=SC_*/SC_*^{<\epsilon}$  is generated by Hamiltonian orbits corresponding to the Reeb orbits in the symplectization. We obtain an invariant  by taking the direct limit of the homology $SH_*^{+,<\epsilon}$ of this chain complex which is called \textbf{positive symplectic homology} $SH_*^+$.
\begin{equation}   
SH_*^+(W)=\underset{slope(H)}{\varinjlim} SH^+_*(W,H,J)^{<k}
\end{equation}

By construction the short exact sequence of chain complexes $0\rightarrow SC^{<\epsilon}\rightarrow SC\rightarrow SC^+\rightarrow 0$ gives an exact triangle between the Morse homology, symplectic homology and the positive symplectic homology.

\begin{prop}\label{les}
    For a Liouville domain $W^{2n}$, there exists a long exact sequence of groups
   
\[\begin{tikzcd}[cramped]
	{...} & {H_{*+n}(W,\partial W)} & {SH_{*}(W)} & {SH_{*}^{+}(W)} & {H_{*+n-1}(W,\partial W)} & {...}
	\arrow[from=1-1, to=1-2]
	\arrow[from=1-2, to=1-3]
	\arrow[from=1-3, to=1-4]
	\arrow[from=1-4, to=1-5]
	\arrow[from=1-5, to=1-6]
\end{tikzcd}\]

\end{prop}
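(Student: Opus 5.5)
The plan is to obtain the sequence from the short exact sequence of chain complexes $0\rightarrow SC_*^{<\epsilon}(W,H,J)\rightarrow SC_*(W,H,J)\rightarrow SC_*^{+}(W,H,J)\rightarrow 0$ recorded just above the statement, and then pass to the direct limit over slopes. First, for each fixed admissible $H$ (with small time-dependent perturbation) and $J$, I check that $SC^{<\epsilon}_*$ is a subcomplex. The differential counts positive gradient trajectories, and these decrease action along $d$, so generators of action $<\epsilon$ are sent to combinations of generators of action $<\epsilon$. The quotient is then generated by the perturbed orbits coming from Reeb orbits in the cylindrical end. The snake lemma gives a long exact sequence $\cdots\rightarrow H_*(SC^{<\epsilon})\rightarrow SH_*(W,H,J)\rightarrow SH^+_*(W,H,J)\rightarrow H_{*-1}(SC^{<\epsilon})\rightarrow\cdots$ for each pair $(H,J)$.

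Next I identify $H_*(SC^{<\epsilon})$ with $H_{*+n}(W,\partial W)$. Since $H$ is $C^2$-small on $W$, its $1$-periodic orbits of small action are exactly the critical points of $H|_W$. By the standard Floer/Morse comparison (choose $H$ and $J$ time-independent on $W$, so that low-energy Floer cylinders are $t$-independent gradient lines), the Floer differential on these generators agrees with the Morse differential of $H$. Under the paper's grading conventions a critical point $p$ has degree $n-\mathrm{ind}_{Morse}(p)$, or the appropriate shift. The function $H$ increases toward $\partial W$, since its slope is positive at the boundary, so $-H$ has $\partial W$ as a minimum region. The resulting Morse homology is therefore $H_{*+n}(W,\partial W)$, possibly written via Poincaré–Lefschetz duality as $H^{n-*}(W)$. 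I would fix the sign and shift conventions following \cite{lazarev2020contact}, \cite{cieliebak2010rabinowitz}.

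Finally I pass to the limit. Continuation maps for increasing slope $H_1\leq H_2$ can be chosen monotone, and for monotone homotopies the action filtration is preserved up to the usual estimate. They therefore induce maps of short exact sequences and hence morphisms of the long exact sequences. On the low-action part they are the identity of $H_{*+n}(W,\partial W)$, because a $C^2$-small region is unchanged in the limit. Direct limits over a directed system are exact, so taking $\varinjlim$ preserves exactness and yields the stated sequence with $SH_*(W)$ and $SH^+_*(W)$.

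The main obstacle I expect is the second step: rigorously identifying the low-action Floer complex with the Morse complex and pinning down the grading shift and the relative-versus-absolute homology, including the effect of the time-dependent perturbation near the constant orbits. I would handle it by taking the perturbation supported only near the nonconstant Reeb families, and by invoking the standard Floer–Salamon–Zehnder argument for $C^2$-small autonomous Hamiltonians. A secondary technical point is checking that continuation maps respect the $\epsilon$-filtration, which follows from the action–energy inequality for monotone homotopies.
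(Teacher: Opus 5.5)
Your proposal is correct and follows the same route as the paper. The paper obtains the sequence ``by construction'' from the short exact sequence $0\to SC^{<\epsilon}\to SC\to SC^{+}\to 0$, identifies $SC^{<\epsilon}$ with a shifted Morse complex, and passes to the direct limit over slopes; you do the same, with more of the details filled in. One minor wording slip: with the paper's action-decreasing differential, the low-action part is the Morse complex of $-H$ (equivalently, the Morse cochain complex of $H$), not the Morse complex of $H$. That is exactly what produces $H_{*+n}(W,\partial W)\cong H^{n-*}(W)$, as you conclude.
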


In symplectic cohomology, differential $\partial$ increases the action. Therefore the filtered complex associated to $SC^*$ is given by $SC^*_{>k}$ where the subcomplex is generated by orbits bounded below by action $k$. Taking inverse limits, we define filtered symplectic cohomology groups $SH^*_{>k}(W)=\underset{slope(H)}{\varprojlim} SH^*_{>k}(W,H,J)$. Here for a small $\epsilon$, the subcomplex $SC^*_+:=SC^*_{>\epsilon}$ is generated by Hamiltonian orbits corresponding to Reeb orbits and the quotient complex $SC^*/SC^*_+$ is the morse complex of $W$ generated by morse critical points. We define the \textbf{positive symplectic cohomology group} to be the inverse limit of $SH^*_+:=SH^*_{>\epsilon}$

\begin{equation}   
SH^*_+(W)=\underset{slope(H)}{\varprojlim} SH^*_+(W,H,J)^{+,>\epsilon}
\end{equation}

Therefore by construction we have the the following dual exact triangle.

\begin{prop}\label{les2}
For a Liouville domain $W^{2n}$, there  exists a long exact sequence such that
\[\begin{tikzcd}
	{...} & {H_{*+n+1}(W)} & {SH^{-*}_{+}(W)} & {SH^{-*}(W)} & {H_{*+n}(W)} & {...}
	\arrow[from=1-1, to=1-2]
	\arrow[from=1-2, to=1-3]
	\arrow[from=1-3, to=1-4]
	\arrow[from=1-4, to=1-5]
	\arrow[from=1-5, to=1-6]
\end{tikzcd}\]
\end{prop}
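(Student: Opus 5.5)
The long exact sequence of Proposition \ref{les2} should come from the short exact sequence of cochain complexes attached to the action filtration, followed by a passage to the inverse limit. Fix an admissible pair $(H,J)$, perturbed so that all orbits are non-degenerate, and fix $\epsilon>0$ smaller than the minimal period of Reeb orbits of $\partial W$. Since $\partial$ increases action, $SC^*_{>\epsilon}(W,H,J)$ is a subcomplex. It is generated by the orbits in the cylindrical end. The quotient $SC^*/SC^*_{>\epsilon}$ is generated by the constant orbits at critical points of the $C^2$-small function $H|_W$. This gives
\[0\rightarrow SC^*_{+}(W,H,J)\rightarrow SC^*(W,H,J)\rightarrow SC^*(W,H,J)/SC^*_+(W,H,J)\rightarrow 0,\]
and the snake lemma yields a long exact sequence for each fixed $H$.

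Next I identify the quotient. For $H$ small in $W$, the standard Floer-to-Morse argument (Salamon--Zehnder, and \cite{viterbo1999functors}) says that Floer cylinders between constant orbits of low action are $t$-independent gradient lines. Hence the quotient is the Morse complex of $H|_W$ with a grading shift. Here $\partial$ counts trajectories flowing out of a critical point, i.e.\ it is the Morse differential of $-H$ in the upward direction. Near $\partial W$ the function $H$ increases in $r$, so $-H$ has inward-pointing gradient. Its Morse homology is therefore absolute homology $H_*(W)$, not relative. This contrasts with Proposition \ref{les}, where $H_*(W,\partial W)$ appears; that is the expected Poincaré dual switch. Using the convention $\mu_{CZ}(p)=n-\mathrm{ind}_{H}(p)$, the index in $-H$ is $2n-\mathrm{ind}_H(p)=n+\mu_{CZ}(p)$. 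So in cohomological degree $-*$ we get $H_{*+n}(W)$, and the connecting map lands in $H_{*+n+1}(W)$, matching the indexing of the statement. I expect this sign and index bookkeeping to be the main place to be careful. I would check it against the unit: the fundamental class in $H_{2n}(W,\partial W)$ dualizes to the point class, and $H_0(W)$ sits in $SH^{n}$-type degree under the shift.

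Finally I pass to the limit. Continuation maps for increasing slope preserve the action filtration up to arbitrarily small error, since they can be chosen monotone with $\partial_s H_s\le 0$ in the appropriate sense. They therefore induce maps of the short exact sequences. On the quotient they are isomorphisms, being continuation maps between Morse complexes of $C^2$-small functions. So we obtain an inverse system of long exact sequences in which every third term is the constant system $H_{*+n}(W)$.

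Inverse limits are not exact in general, and this is the main obstacle. I would handle it in one of two ways. The first is to take the limit at chain level, using the telescope or homotopy inverse limit. The second is to observe that in each degree the systems are Mittag-Leffler: for fixed degree and fixed action window only finitely many orbits appear, and the system can be taken to stabilise degreewise under the nondegeneracy and grading assumptions. Alternatively, as in \cite{cieliebak2010rabinowitz}, one defines $SH^*_+$ and $SH^*$ via chain-level inverse limits compatible with the constant Morse part. Then the $\varprojlim^1$ terms vanish and the limit sequence is exact. This gives the exact sequence of Proposition \ref{les2}.
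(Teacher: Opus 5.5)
Your proposal follows the same route as the paper. You use the action-filtration short exact sequence $0\to SC^*_{>\epsilon}\to SC^*\to SC^*/SC^*_{>\epsilon}\to 0$, identify the quotient with a Morse complex computing absolute homology $H_*(W)$ (graded by $\mathrm{ind}_H=n-\mu_{CZ}$, so degree $-*$ gives $H_{*+n}(W)$), and pass to the inverse limit, a step the paper simply asserts holds ``by construction''. Your extra care with $\varprojlim^1$ is warranted, but degreewise stabilisation is not available in general; the clean justification, over a field as in the paper's applications, is that each $SH^*(W,H;k)$ with $H$ linear and nondegenerate is finite-dimensional, so every inverse system involved is Mittag-Leffler and the limit sequence stays exact.
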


The following theorem relates symplectic cohomology and symplectic homology over field coefficients $k$. In particular it implies that vanishing of symplectic cohomology is equivalent to the vanishing of symplectic homology over field coefficients.

\begin{thm}[\cite{cieliebak2018symplectic}\cite{ritter2013topological}]\label{adual}(Algebraic Duality) Let $W$ be a Liouville domain and let $k$ be a field. Then $SH_*(W,k)\cong Hom(SH^*(W,k),k)$
    
\end{thm}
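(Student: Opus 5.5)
The plan is to reduce everything to finite-dimensional linear algebra at a fixed slope, and then pass to the limit over slopes. The passage to the limit is where the real content lies. For $SH^*$ we view $\operatorname{Hom}(SH^*(W,k),k)$ as the \emph{continuous} dual. Here $SH^*(W,k)$ carries the inverse-limit topology coming from its definition as $\varprojlim_{slope(H)} SH^*(W,H,J)$. When the groups are degreewise finite-dimensional and the limit stabilises, this is the ordinary dual.

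\textbf{Fixed slope.} Fix an admissible pair $(H,J)$ with a small time-dependent perturbation, so that $\mathcal{P}(H)$ is finite. Both $SC_*(W,H,J)$ and $SC^*(W,H,J)$ are then finite-dimensional $k$-vector spaces on the same generators. By the definitions recalled above, $d$ counts $\mathcal{M}(x_+,x_-,H,J)$ from $x_+$ to $x_-$ and $\partial$ counts the same moduli spaces in the other direction. Hence, in the basis of orbits, the matrix of $\partial$ is the transpose of that of $d$, and $SC^*(W,H,J)=\operatorname{Hom}_k(SC_*(W,H,J),k)$ as complexes. Over a field the universal coefficient theorem then gives
\[
SH^*(W,H,J;k)\cong \operatorname{Hom}_k(SH_*(W,H,J;k),k),
\]
and these spaces are finite-dimensional. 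I would also check that the continuation map $SC^*(H_2)\to SC^*(H_1)$ is the transpose of $SC_*(H_1)\to SC_*(H_2)$, since both count the same continuation cylinders. It follows that the inverse system defining $SH^*$ is exactly the $k$-dual of the direct system defining $SH_*$.

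\textbf{Passing to the limit.} Put $V_H:=SH_*(W,H,J;k)$, which is finite-dimensional. Then $SH_*(W;k)=\varinjlim V_H$. Since $\operatorname{Hom}(-,k)$ turns colimits into limits,
\[
SH^*(W;k)=\varprojlim V_H^\vee=\operatorname{Hom}\big(\varinjlim V_H,k\big)=SH_*(W;k)^\vee .
\]
There is a subtlety in defining $SH^*$ at the homology level rather than as the homology of $\varprojlim SC^*$. Because each $SC^*(H)$ is finite-dimensional, the inverse system satisfies the Mittag-Leffler condition, so $\varprojlim^1$ vanishes and the two definitions agree.

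To obtain the stated direction, give $\varprojlim V_H^\vee$ its inverse-limit topology, in which each $V_H^\vee$ is discrete. A continuous functional on this limit factors through some $V_H^\vee$, and $(V_H^\vee)^\vee=V_H$ because $V_H$ is finite-dimensional. Taking these factorisations compatibly over all $H$ gives
\[
\operatorname{Hom}_{cont}(SH^*(W;k),k)=\varinjlim (V_H^\vee)^\vee=\varinjlim V_H=SH_*(W;k),
\]
which is the claimed isomorphism. In particular, $SH_*=0$ if and only if $SH^*=0$: one direction is immediate from $SH^*=SH_*^\vee$, and the other from the displayed formula.

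\textbf{Main obstacle.} The delicate step is the final double-dual. It requires that the topology on $SH^*$ be intrinsic, meaning independent of the cofinal family of Hamiltonians and compatible with the action filtration. It also requires care in treating each grading separately, since $SC$ is graded by $\mu_{CZ}^{\mathbb{Q}}$ and the dualisation is done degreewise. I would settle the first point by noting that any two cofinal sequences of slopes give pro-isomorphic inverse systems, which therefore induce the same topology.
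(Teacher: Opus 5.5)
Your proof is correct, and its core is the standard argument the paper relies on: at a fixed admissible $(H,J)$ the differentials and continuation maps of $SC^*$ are transposes of those of $SC_*$, the universal coefficient theorem holds over $k$, and $\operatorname{Hom}(\varinjlim V_H,k)=\varprojlim V_H^\vee$. The paper gives no proof of its own. It cites Cieliebak--Oancea and Ritter, and reproduces exactly this fixed-$H$ step in the proof of Proposition \ref{adualc}.

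Where you differ is in what you do afterwards. In this paper's conventions ($SH_*$ a direct limit, $SH^*$ an inverse limit), the standard argument yields $SH^*(W,k)\cong\operatorname{Hom}(SH_*(W,k),k)$. That is the reverse of the printed direction, which is presumably transcribed from Ritter, whose cohomological $SH^*$ is the direct limit. With the algebraic $\operatorname{Hom}$, the printed direction is false as soon as $SH_*$ is infinite-dimensional in some degree, since for infinite-dimensional $V$ the algebraic dual of $V^\vee$ is strictly larger than $V$. The paper itself notes after Proposition \ref{adualc} that $\operatorname{Hom}(\varprojlim M,N)\ncong\varinjlim\operatorname{Hom}(M,N)$. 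So your passage to the continuous dual is what makes the statement true, not a technicality. It agrees with the algebraic dual when every $SH_k$ is finite-dimensional.

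One step should be stated more carefully. A continuous functional on $\varprojlim V_{H'}^\vee$ factors through the image of the projection $\pi_H\colon\varprojlim V_{H'}^\vee\to V_H^\vee$, not through $V_H^\vee$ itself.
\begin{itemize}
\item Surjectivity of $\varinjlim V_H\to\operatorname{Hom}_{\mathrm{cont}}(SH^*,k)$ follows by extending from this image.
\item Injectivity needs $\operatorname{im}\pi_H$ to coincide with the stable image $\operatorname{im}(V_{H'}^\vee\to V_H^\vee)$ for $H'\gg H$, so that an element of $V_H$ annihilating $\operatorname{im}\pi_H$ dies in some $V_{H'}$.
\end{itemize}
That is the Mittag-Leffler property of a countable cofinal tower of finite-dimensional spaces. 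You invoke it only for the $\varprojlim^1$ remark, but it is needed here too.

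Finally, everything the paper uses downstream (Section \ref{4}) is the equivalence $SH_*=0\iff SH^*=0$. Both directions already follow from your first identity $SH^*\cong(SH_*)^\vee$, since a nonzero vector space has a nonzero dual. The double-dual step is needed only for the isomorphism as printed.
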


\begin{rem}
    Symplectic homology $SH_*(W)$ and symplectic cohomology $SH^*(W)$ can also be defined using a Hamiltonian $H$ such that $H$ is quadratic (for instance $r^2$) in the collar $[1,\infty]\times\partial W $ and $C^2$-small in $W$. For such a quadratic Hamiltonian denoted by $H_q$, the homology of the chain complex $SC_*(W,H_q)$ is isomorphic to $SH_*(W)$ defined as the direct limit of linear Hamiltonians. However in order to have a well-defined cochain complex $SC^*(W,H_q)$, it is necessary to use a direct product instead of a direct sum since the differentials are action increasing and the the generators could be infinite. This is not a problem with the definition using linear Hamiltonian because  the differential is a finite sum for action reasons. For more details and a proof of the isomorphism, see \cite{ritter2013topological}.
\end{rem}

\subsection{Symplectic homology of covering spaces.} \label{2.1}

Zhou \cite[Section 3.3]{zhou2021symplectic}\cite{zhou2023fillings}  defines and uses symplectic homology of covering spaces to study fillings of asymptotically dynamically convex contact manifolds. Given a covering $\Tilde{W}\xrightarrow{\pi}W$, $SH(\Tilde{W},H)$ is defined as the group generated by lifts of the periodic orbits of $W$ and the differential is determined by counting the moduli spaces of (unique) lifted trajectories. More precisely the chain group is defined as $SC_*(\Tilde{W},H):=\bigoplus_{x\in\mathcal{P}(H)} \Pi_{\pi(\Tilde{x})=x}\mathbb{Z}\Tilde{x}$ where $\tilde{x}$ is a lift of the contractible Reeb orbit $x$ and it could be an infinite product in the fiber direction.
The differential $\tilde{d}:SC_*(\Tilde{W},H)\rightarrow SC_{*-1}(\Tilde{W},H)$ is given by $d\tilde{x}=\bigoplus_{{y}}\tilde{\mathcal{M}}(\tilde{x},\tilde{y})x$ where $\tilde{\mathcal{M}}(\tilde{x},\tilde{y})$ is the moduli space of rigid trajectories in $\mathcal{M}(x,y)$ that can be lifted to trajectories from $\tilde{y}$ to $\tilde{x}$ or equivalently the count of lifted trajectories. The differential $\tilde{d}$ is well defined since the moduli space $\mathcal{M}(x,y)$ is compact and using unique lifting property there are only finitely many $\tilde{y}$ after $\tilde{x}$ is fixed.

The canonical map $x\rightarrow \Pi_{\pi(\Tilde{x})=x}\Tilde{x} $ induces maps on the level of $SH^{<\epsilon},SH, SH^+$ and fits into the long exact sequence of symplectic homology. Furthermore the map is a unital ring map in $SH$ \cite[4.1]{zhou2023fillings}.

  \[\begin{tikzcd}
	{...} & {H^*(W)} & {SH_{*-n}(W)} & {SH^+_{*-n}(W)} & {...} \\
	{...} & {H^*(\tilde{W})} & {SH_{*-n}(\tilde{W})} & {SH^+_{*-n}(\tilde{W})} & {...}
	\arrow[from=1-1, to=1-2]
	\arrow[from=1-2, to=1-3]
	\arrow[from=1-2, to=2-2]
	\arrow[from=1-3, to=1-4]
	\arrow[from=1-3, to=2-3]
	\arrow[from=1-4, to=1-5]
	\arrow[from=1-4, to=2-4]
	\arrow[from=2-1, to=2-2]
	\arrow[from=2-2, to=2-3]
	\arrow[from=2-3, to=2-4]
	\arrow[from=2-4, to=2-5]
\end{tikzcd}\]

\begin{rem}
Zhou\cite[Section 3.3]{zhou2021symplectic}\cite{zhou2023fillings} uses cohomological convention and defines symplectic cohomology groups. They coincide with our definition if we define our grading using $-\mu_{CZ}$ instead of $\mu_{CZ}$. He also works with quadratic Hamiltonians. If we want to work  with linear Hamiltonians, it suffices to take the direct limit of $\varinjlim {SH(\Tilde{W},H)}$ induced by lifts of moduli spaces involved in the definition of continuation maps.
   \end{rem}

Zhou \cite[Remark 4.1]{zhou2023fillings} also discusses symplectic homology twisted by $\mathbb{Z}[\pi_1]$ coefficients and makes a comparison with the symplectic homology of covering spaces. In this case, the chain group is the $\mathbb{Z}[\pi_1]$ module generated by the periodic orbits with a twisted differential by the holonomy action of the fundamental group. When $\pi_1$ is finite, symplectic homology of the universal covering space coincides with the symplectic homology twisted with $\mathbb{Z}[\pi_1]$ coefficients since the covering space is compact and there are finitely many lifts of the generators. When $\pi_1$ is infinite, the groups are different since symplectic homology of the universal covering space has infinitely many generators in a fiber direction.

\begin{rem}
    Singular cohomology of $W$ with $\mathbb{Z}[\pi_1]$ local coefficients is isomorphic to the compactly supported cohomology of the universal covering space $H^c(\tilde{W})$ and not $H(\tilde{W})$. However the cup product makes $H^c(\tilde{W})$, a module over  $H(\tilde{W})$.    
\end{rem}

\begin{thm}\label{modulesc}
    Given a Liouville domain $W$, the pair of pants product makes the symplectic homology of $W$, twisted by $\mathbb{Z}[\pi_1]$ local coefficients a module over $SH_*(\tilde{W})$, the symplectic homology of the universal covering $\tilde{W}$.
\end{thm}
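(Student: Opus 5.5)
The plan is to build the module structure at chain level, by lifting the pair of pants moduli spaces, and then to check that it survives passage to homology and to the direct limit over slopes. I will use the covering-space model for both sides. $SC_*(\tilde W,H)$ is a product over lifts $\tilde x$ of orbits $x$, infinite in the fiber direction. The $\mathbb{Z}[\pi_1]$-twisted complex $SC_*(W,H;\mathbb{Z}[\pi_1])$ is identified with the \emph{finitely supported} chains $\bigoplus_{x}\bigoplus_{\pi(\tilde x)=x}\mathbb{Z}\tilde x$ on the universal cover, with the lifted differential $\tilde d$. This identification follows by fixing a lift (a capping path to a basepoint) for each orbit; the holonomy twist then becomes translation by deck transformations. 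It is the Floer analogue of the earlier remark that $H^*(W;\mathbb{Z}[\pi_1])\cong H^*_c(\tilde W)$, and I expect the module statement to mirror the fact that $H^*_c(\tilde W)$ is a module over $H^*(\tilde W)$ under cup product.

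To define the product, choose admissible Hamiltonians $H_1,H_2$ and $H_3$ with slope of $H_3$ at least the sum of the other two slopes, together with Floer data on the pair of pants $P$ (two positive ends, one negative end), as in Ritter's TQFT setup. For orbits $x_1,x_2,x_3$, take the rigid part of the moduli space $\mathcal{M}(x_1,x_2;x_3)$ of solutions $u:P\to\hat W$. Since $P$ deformation retracts to a wedge of two circles, and $\tilde W\to W$ is a covering, a map $u$ lifts to $\tilde W$. Once the lifts $\tilde x_1$ and $\tilde x_3$ at one input end and the output end are fixed, the lift is unique, and this also determines $\tilde x_2$. So each rigid $u$ together with a choice of $\tilde x_1$ gives a lifted pants with ends $(\tilde x_1,\tilde x_2,\tilde x_3)$. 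Define
\[\tilde x_1\ast\tilde x_2=\sum_{\tilde x_3}\#\tilde{\mathcal{M}}(\tilde x_1,\tilde x_2;\tilde x_3)\,\tilde x_3,\]
where $\tilde x_1$ ranges over the product complex $SC_*(\tilde W)$ and $\tilde x_2$ over the finitely supported complex.

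The main point, and the step I expect to be the real obstacle, is that this is well defined. The first input is an infinite product $\sum_g a_g\, g\tilde x_1$, while the output must again be finitely supported. Fix $\tilde x_2$. By compactness of $\mathcal{M}(x_1,x_2;x_3)$, with finitely many orbits below a given action and finitely many rigid pants, and by uniqueness of lifts, only finitely many pairs $(\tilde x_1,\tilde x_3)$ are connected to $\tilde x_2$ by a lifted pants. Hence the sum over the product factor is finite and the output lies in the direct sum. This is the same finiteness argument that makes $\tilde d$ well defined. It has to be done carefully by splitting the lifted pants according to the lift at the finitely supported end, and this is where the asymmetry product versus sum is essential.

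The remaining steps are standard. First, gluing and breaking of one-dimensional lifted pants moduli spaces show that $\ast$ satisfies the Leibniz rule with respect to $\tilde d$, so it descends to homology. Second, associativity up to homotopy, $(a\ast b)\ast c\simeq a\ast(b\ast c)$, follows from the usual one-parameter family of four-punctured spheres, lifted in the same way. Third, the unit is the lifted image of the unit of $SH_*(W)$, namely the canonical map $x\mapsto\prod\tilde x$ applied to the fundamental class; it acts as the identity via the cap-with-one-puncture argument. Fourth, the products commute with lifted continuation maps, so they pass to $\varinjlim$ over slopes. Finally, the deck transformation action commutes with $\ast$, so the module structure is compatible with the $\mathbb{Z}[\pi_1]$-module structure.
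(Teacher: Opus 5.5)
Your proposal is correct and follows essentially the same route as the paper: lift each rigid pair of pants uniquely because $P$ retracts to a figure eight whose generating loops map to contractible orbits, and get finiteness of $\tilde x_1\ast\tilde x_2$ by fixing the lift at the finitely supported input and using compactness of the rigid moduli spaces. One small point: you should say explicitly that it is the contractibility of the ends, not the figure-eight retraction by itself, that makes $u_*\pi_1(P)$ trivial and hence lets $u$ lift. The paper's proof stops at well-definedness, so your remarks on the Leibniz rule, associativity, the unit and continuation maps fill in steps it leaves implicit.
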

\begin{proof}
    We notice that the chain complex of symplectic homology with $\mathbb{Z}[\pi_1]$ local coefficients is a  subcomplex of that of symplectic homology of the universal cover $SH_*(\tilde{W})$ since its differential coincides with that of $SH_*(W,\mathbb{Z}[\pi_1])$ when restricted to finitely many orbits. Now let $\tilde{x}*\tilde{y}$ denote the pair of pants product with an element $\tilde{y}$ represented by an infinite sum of orbits in the fiber of $y$ and a single orbit $\tilde{x}$ in the fiber of $x$ . 

    Let $\mathcal{M}(x,y;z)$ denote the rigid moduli space of pair of pants with incoming ends at $x,y$ and outgoing end at $z$.The unique lifting property holds for a pair of pants $P$ in $\mathcal{M}(x,y;z)$ as $P$ deform retracts to a figure $8$ whose fundamental group is generated by any two boundary components say $x$ and $y$. The image  of the fundamental group of pair of pants by the map $P$, $Im (\pi_1(P))$ is trivial in $\pi_1(W)$ since $x$ and $y$ are contractible. Therefore the lifting lemma implies there is a unique lift of $P$ such that $\tilde{x}$ is a lift of $x$

    Let $z$ be an orbit in $W$. The rigid moduli space of pair of pants $\mathcal{M}(x,y;z)$ in $W$ is finite by compactness. Therefore after fixing $\tilde{x}$, there exists only finite lifts of  $\mathcal{M}(x,y;z)$ in $\tilde{W}$. Since the sum of rigid moduli spaces $\Sigma_z\mathcal{M}(x,y;z)$ is finite in $W$, there exists only finite lifts of $\sqcup_z\mathcal{M}(x,y;z)$ having $\tilde{x}$ and $\tilde{y}$ as entries. As a consequence the sum defining $\tilde{x}*\tilde{y}$ is finite implying $\tilde{x}*\tilde{y} \in SH_*({W},\mathbb{Z}[\pi_1(W)])$
     
     Therefore the product $z\in SC(W,\mathbb{Z}[\pi_1])$ and we have a module action $SH_*(\tilde{W})\otimes SH_*(W,\mathbb{Z}[\pi_1])\rightarrow SH_{*+*-n}(W,\mathbb{Z}[\pi_1])$.
\end{proof}

\begin{cor}\label{loccoeff}
    Given a Liouville domain $W$, $SH_*(W)=0$ implies $SH_*(W,{\mathbb Z[\pi_1]})=0.$
\end{cor}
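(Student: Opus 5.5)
The plan is to combine Theorem \ref{modulesc} with the fact that the canonical lifting map $SH_*(W)\rightarrow SH_*(\tilde{W})$, $x\mapsto \Pi_{\pi(\tilde{x})=x}\tilde{x}$, is a unital ring map (\cite[4.1]{zhou2023fillings}). If $SH_*(W)=0$, then the unit $1_W$ of $SH_*(W)$ vanishes, since $SH_*(W)$ is a unital ring whose unit is the image of the fundamental class under $H_{*+n}(W,\partial W)\rightarrow SH_*(W)$ in Proposition \ref{les}. Because the lifting map is unital, the unit $1_{\tilde{W}}$ of $SH_*(\tilde{W})$ is the image of $1_W$, and so $1_{\tilde{W}}=0$. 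Then $SH_*(\tilde{W})=0$ as a ring. This also matches the commutative diagram of long exact sequences above.

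Next, let $M:=SH_*(W,\mathbb{Z}[\pi_1])$, viewed as a module over $SH_*(\tilde{W})$ via Theorem \ref{modulesc}. I will check that the unit of $SH_*(\tilde{W})$ acts as the identity on $M$. Given that, every $m\in M$ satisfies $m=1_{\tilde{W}}*m=0*m=0$, so $M=0$.

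The main obstacle is the unitality of the module action, which Theorem \ref{modulesc} does not state explicitly. I would prove it as in the untwisted case, at the chain level. The unit is represented by the lifts of the minimum of the $C^2$-small Morse part, or equivalently by counting caps (planes with one output end) lifted to $\tilde{W}$. Gluing a cap to an input of the pair of pants gives a cylinder, which by the standard degeneration argument is chain homotopic to a continuation cylinder. All of these surfaces have trivial image of $\pi_1$ in $W$, because their ends are contractible orbits and caps are simply connected. So the lifting lemma used in the proof of Theorem \ref{modulesc} applies to every one-parameter family in the homotopy. The resulting lifted homotopy sends an element of the twisted complex to itself, with only finitely many lifts contributing once the input lift is fixed, by the same compactness argument as before. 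Hence $1_{\tilde{W}}*(\cdot)$ induces the identity on $M$ after passing to the direct limit over slopes, where the lifted continuation maps are compatible.

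Alternatively, I could avoid the module structure and act directly on $M$ with the unit $1_W\in SH_*(W)$, using that $SH_*(W,\mathbb{Z}[\pi_1])$ is a unital module over $SH_*(W)$ by the same lifting of pairs of pants. This gives $M=1_W\cdot M=0$. I would present this as a fallback if the identification of the unit of $SH_*(\tilde{W})$ causes difficulties with the infinite products in the fiber direction.
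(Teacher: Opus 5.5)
Your proposal is correct and follows the paper's proof: the unital ring map $SH_*(W)\to SH_*(\tilde{W})$ kills the unit of $SH_*(\tilde{W})$, and the module structure of Theorem \ref{modulesc} then forces $SH_*(W,\mathbb{Z}[\pi_1])=0$. Your chain-level sketch of why the unit acts as the identity fills in a step the paper only asserts: glue a cap to the pair of pants, then lift the resulting continuation homotopy with one end fixed. Your fallback via the $SH_*(W)$-action is equivalent, since that action factors through the canonical map.
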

\begin{proof}
    Let $\tilde{W}$ be the universal covering of $W$. Since the canonical map $SH_*(W)\rightarrow SH_*(\tilde{W})$ is a unital ring map, $SH_*(W)=0$ implies $SH_*(\tilde{W})=0$. Theorem \ref{module} implies that $SH_*(W,\mathbb{Z}[\pi_1])=0$ since the unit of $SH_*(\tilde{W})$ acts as the identity. 
    \end{proof}

\subsection{Symplectic cohomology with local coefficients} \label{2.2} We define symplectic cohomology with local coefficients following an analogous construction to the dual case of symplectic homology and prove a similar vanishing result. However we work over local coefficients $k[\pi]$  where  $k$ denotes a field since it is necessary for the existence of the coproduct in the level of the homology.

    Let $(SC^*(W),\delta,H)$ be the symplectic cochain complex associated to a Liouville domain $W$ and admissible Hamiltonian $H$.  We define $SH^*(W,k[\pi_1(W)])$  to be the homology of the twisted chain complex $(SC^*\otimes k[\pi_1(W)],H,\bar{\delta})$ with twisted differential $$\delta(x\otimes a)=\sum\limits_{CZ(y)=CZ(x)+1}y\otimes \bar{\mathcal{M}}(y,x).a$$ where rigid elements of $\bar{\mathcal{M}}(y,x)$ as seen as elements of the fundamental groupoid from $x$ to $y$ which act on $k[\pi_1(W)]$. The symplectic cohomology with $k[\pi_1(W)]$ coefficients is given by taking the inverse limit with respect to the continuation maps over admissible Hamiltonian $\mathcal{H}$,
    \begin{equation}
        SH^*(W,k[\pi_1(W)]):=\lim_{\substack{\longleftarrow\\H\in{\mathcal{H}}}} SH^*(W,H,k[\pi_1(W)])
    \end{equation}

We obtain by definition the following long exact sequence by filtering the twisted complex by action like in $\ref{les2}$ and identifying the homology with local coefficients ${H_{*}(\tilde{W},k)}$ with the homology of the universal cover $\tilde{W}$.

\begin{prop}\label{lesloc2}
For a Liouville domain $W^{2n}$ with universal cover $\tilde{W}$, there  exists a long exact sequence such that 
\[\begin{tikzcd}
	{...} & {H_{*+n+1}(\tilde{W},k)} & {SH^{-*}_{+}(W,k[\pi_1])} & {SH^{-*}(W,k[\pi_1])} & {H_{*+n}(\tilde{W},k)} & {...}
	\arrow[from=1-1, to=1-2]
	\arrow[from=1-2, to=1-3]
	\arrow[from=1-3, to=1-4]
	\arrow[from=1-4, to=1-5]
	\arrow[from=1-5, to=1-6]
\end{tikzcd}\]
    
\end{prop}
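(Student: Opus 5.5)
The plan is to mimic the construction of Proposition \ref{les2}, replacing the cochain complex by the twisted complex $(SC^*\otimes k[\pi_1(W)],\bar{\delta})$. First, for a fixed admissible $H$ (after small time-dependent perturbation), I will check that the action filtration is compatible with the twisted differential. The twisted differential $\bar{\delta}$ counts exactly the same rigid Floer cylinders as $\delta$; only the coefficients change, via the groupoid element attached to each cylinder. Hence $\bar\delta$ still increases action. This gives a subcomplex $SC^*_{+}(H)\otimes k[\pi_1]$ generated by orbits of action $>\epsilon$, i.e. by lifts of Reeb orbits, and a quotient complex generated by the Morse critical points of $H|_W$. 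The short exact sequence
\[0\to SC^*_+(H)\otimes k[\pi_1]\to SC^*(H)\otimes k[\pi_1]\to \big(SC^*/SC^*_+\big)(H)\otimes k[\pi_1]\to 0\]
then yields a long exact sequence at each slope.

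Second, I will identify the quotient with a topological invariant. For $\epsilon$ small and $H$ $C^2$-small in $W$, the low-action Floer cylinders are the gradient flow lines of $H$ (Floer's $t$-independent argument). So the quotient is the Morse complex of $H$ on $W$ with $k[\pi_1]$ local coefficients, with the usual grading shift by $n$ and the reversed convention. Flow lines connecting critical points lift uniquely once an endpoint lift is fixed. The Morse complex with $k[\pi_1]$ coefficients is therefore the Morse complex of the lifted function $\tilde H$ on $\tilde W$, with finitely many lifts in each fiber contributing per element. Its homology is $H_{*}(\tilde W,k)$ after matching the grading conventions, using the orientation of the negative gradient direction as in the untwisted case. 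Here I must be careful about which of $H$ or $H^c$ of $\tilde W$ appears, in view of the earlier remark. Since $\bar\delta$ is the cohomological (outgoing) differential, the Morse part counts outgoing flow lines of $H$, equivalently incoming flow lines of $-H$. I will use this Morse-theoretic Poincaré–Lefschetz swap, exactly as in Proposition \ref{les2}, to land on $H_{*+n}(\tilde W,k)$.

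Third, I will pass to the inverse limit over slopes. The continuation maps count lifted/twisted continuation cylinders. They respect the action filtration up to the standard monotonicity, and they are the identity on the low-action Morse part for admissible $H$. This gives an inverse system of long exact sequences with constant Morse term. Exactness of $\varprojlim$ needs a Mittag-Leffler condition. I expect this to be the main obstacle, since $k[\pi_1]$ may be infinite-dimensional over $k$ and each $SH^*(W,H,k[\pi_1])$ need not be finite-dimensional. My first attempt will be to avoid the issue by working with a cofinal sequence $H_j$. I would note that the constant Morse terms make the $\lim^1$ terms of the outer system vanish, and then use the five-lemma on the resulting sequences. If that fails, I would restrict to slopes avoiding the action spectrum, where the continuation maps on the positive part can be taken to be inclusions of quotient complexes; the systems are then surjective on chain level and Mittag-Leffler holds.
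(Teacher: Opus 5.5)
Your first two steps are the paper's argument. The paper proves the proposition in one sentence: it filters the twisted complex by action as in Proposition~\ref{les2} and identifies the low-action quotient, a Morse complex with $k[\pi_1]$ coefficients, with $H_*(\tilde W,k)$. Your resolution of the $H$ versus $H^c$ worry is correct. The cohomological differential on finitely supported lifts is the finite-chain Morse differential of the lift of $H$, whose negative gradient points into $\tilde W$, so you get $H_*(\tilde W,k)$ directly, with no duality on the non-compact $\tilde W$ needed. The paper says nothing about passing to the inverse limit over slopes. You are right that this is the delicate point, but neither of your two fixes closes it.

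\textbf{First fix.} $\varprojlim^1$ of the constant Morse system does vanish. However, exactness of an inverse limit of long exact sequences is governed by $\varprojlim^1$ of the image systems at each spot, not of the terms themselves. There is also no second exact sequence to which a five-lemma comparison could be applied. Concretely, let $\psi_j\colon SH^*(W,H_j,k[\pi_1])\to H_*(\tilde W,k)$ be the map to the Morse part. Exactness at $\varprojlim_j SH^*_+(W,H_j,k[\pi_1])$ holds if and only if $\varprojlim^1_j\operatorname{im}\psi_j=0$ for this descending chain of subspaces. Exactness at $H_*(\tilde W,k)$ follows if $\varprojlim^1_j$ of $\operatorname{im}\bigl(SH^*_+(W,H_j,k[\pi_1])\to SH^*(W,H_j,k[\pi_1])\bigr)$ vanishes. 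When $H_*(\tilde W,k)$ is infinite-dimensional, nothing in your argument forces the chain $\operatorname{im}\psi_j$ to stabilize.

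\textbf{Second fix.} Chain-level surjectivity makes $0\to\varprojlim_j SC^*_+(H_j)\to\varprojlim_j SC^*(H_j)\to C^*_{\mathrm{Morse}}\to 0$ exact, and so gives a long exact sequence for $H^*(\varprojlim_j SC^*(H_j))$. But by the Milnor sequence $0\to\varprojlim^1_j H^{*-1}(SC^*(H_j))\to H^*(\varprojlim_j SC^*(H_j))\to\varprojlim_j H^*(SC^*(H_j))\to 0$, this is not the group $\varprojlim_j SH^*(W,H_j,k[\pi_1])$ by which the paper defines $SH^*(W,k[\pi_1])$. Mittag-Leffler on chains does not pass to cohomology.

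In the untwisted case over a field (Proposition~\ref{les2}), and whenever $\pi_1$ is finite, all groups are finite-dimensional over $k$, so these $\varprojlim^1$ terms vanish automatically. In general you must do one of the following: prove the vanishing, state the sequence for the chain-level limit, or avoid limits by working in a fixed action window as in Remark~\ref{action bound}.
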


Let $x\otimes a$ , $y \otimes b$ and $z\otimes c$ be elements of $(SC^*\otimes k[\pi_1(W)])$. Since we are working exclusively with contractible orbits, these elements correspond to unique lifts $\tilde{x}$ ,$\tilde{y}$ and $\tilde{z}$ of $x$ ,$y$ and $z$ in the universal covering space $\tilde{W}$ with covering map $p:\tilde{W}\rightarrow W$. 

Symplectic cohomology with $k[\pi_1]$ local coefficients correspond to the homology of the cochain complex $(SC^*(\tilde{W},H,k):=\bigoplus_ {p(\tilde{x})=x}k\tilde{x},\tilde{\delta} ) $ where the the differential $\tilde{\delta}$ is defined by the count of unique lifts of elements of rigid moduli spaces followed by an inverse limit. This is because the twisted differential $\delta$ and the lifted differential $\tilde{\delta}$ coincide. Note after fixing a lift of the starting point, that the endpoints after lifting are in $1-1$ correspondence with the homotopy class of paths in $W$.

We now examine coproduct and counit on symplectic cohomology with $k[\pi_1]$ coefficients. For more details regarding their construction in the base case $SH^*(W)$, see \cite{ritter2013topological} with the exception that his grading convention makes it a homological grading.

The rigid moduli space of pair of pants  $\mathcal{M}(x,y;z)$ involved in the definition of the product in symplectic homology can be used to define a coproduct $\mu$ in symplectic cohomology given by $\mu(z)=\oplus_{x,y}\mathcal{M}(x,y;z)x\otimes y$. As in the case of the product, we can lift the images of pair of pants with contractible ends uniquely. This  construction of  lifted product in symplectic homology also gives  the existence of a \textit{lifted coproduct} $ \tilde{\mu}$ by exchanging the incoming for outgoing ends  in defining the map $$\tilde{\mu}:SH^{*+*-n}(W,k[\pi_1(W)])\rightarrow SH^*(W,k[\pi_1(W)])\otimes SH^*(W,k[\pi_1(W)])$$ on symplectic cohomology with $k[\pi_1]$ local coefficients. 

Dual to the unit in symplectic homology $SH_*$, we have a counit in symplectic cohomology denoted by $e:SC^*(W)\rightarrow k$. The counit $e$ is defined by counting the rigid moduli spaces $\mathcal{M}(x)$ of trajectories from $x$ parametrized by Floer equation near $x$ that can be capped off at $\infty$ by a $J-$holomorphic parametrization. The counit $e:SC^*(W,k)\rightarrow k$ is such that $e(x)=|\mathcal{M}(x)|$. We define the \textit{lifted counit} for $SH^*(W,k[\pi_1(W)])$ by the map $$e(\tilde x)=|\tilde{\mathcal{M}}(x)|=|\mathcal{M}(x)|.$$ Note that  by unique lifting the count of moduli space of lifts starting at $\tilde{x}$ is equal to the count of $\mathcal{M}(x)$.

The projection map $p:SC^*(W,k[\pi_1(W)])\rightarrow SC^*(W)$ is a cochain morphism and induces the following commutative diagram with $p_*$ defining a counital coproduct preserving map .
\[\begin{tikzcd}
	{...} & {SH^{-*}_+(W,k[\pi_1(W)])} & {SH^{-*}(W,k[\pi_1(W)])} & {H_{*+n}(\tilde{W})} & {...} \\
	{...} & {SH^{-*}_+(W)} & {SH^{-*}(W)} & {H_{*+n}(W)} & {...}
	\arrow[from=1-1, to=1-2]
	\arrow[from=1-2, to=1-3]
	\arrow["{p_*}"', from=1-2, to=2-2]
	\arrow[from=1-3, to=1-4]
	\arrow["{p_*}"', from=1-3, to=2-3]
	\arrow[from=1-4, to=1-5]
	\arrow["{p_*}"{description}, from=1-4, to=2-4]
	\arrow[from=2-1, to=2-2]
	\arrow[from=2-2, to=2-3]
	\arrow[from=2-3, to=2-4]
	\arrow[from=2-4, to=2-5]
\end{tikzcd}\]

\begin{thm}\label{vansc}
    Let $W$ be a Liouville domain such that $SH^*(W)=0$. Then $SH^*(W,k[\pi_1(W)])=0$
\end{thm}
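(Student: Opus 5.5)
The proof dualizes Corollary \ref{loccoeff}. There the unit of $SH_*(\tilde W)$ acted by the identity; here I will use the counit and the lifted coproduct on $SH^*(W,k[\pi_1(W)])$ to show that the identity map of this group factors through $SH^*(W)=0$.

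First I record the counit axiom for the lifted coproduct. On the base, $(e\otimes \mathrm{id})\circ\mu=\mathrm{id}$ on $SH^*(W)$, as in \cite{ritter2013topological}. The reason is that gluing the capping disc counted by $\mathcal{M}(x)$ onto one leg of the pair of pants gives a cylinder, and the glued moduli space is cobordant to the continuation cylinders, which compute the identity.

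I then lift this argument. Each surface involved has contractible ends: a pair of pants, a capped leg, and the resulting cylinder. By the lifting argument in the proof of Theorem \ref{modulesc}, each such surface lifts uniquely once the lift of one end is fixed. The one-dimensional parametrized moduli spaces realizing the gluing cobordism also lift uniquely. Hence the same chain homotopy lifts, and on $SH^*(W,k[\pi_1(W)])$ we get
\[
(\tilde e\otimes \mathrm{id})\circ\tilde\mu=\mathrm{id}.
\]
The finiteness issue is handled as in Theorem \ref{modulesc}: after fixing $\tilde z$, compactness of the rigid base moduli spaces leaves only finitely many lifted configurations. So $\tilde\mu$ lands in the direct sum, and the homotopy consists of finite sums. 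I will do all of this at a fixed slope and then pass to the inverse limit, which is compatible because continuation maps lift as well.

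Next I factor the lifted counit through the projection. By definition $\tilde e(\tilde x)=|\mathcal{M}(x)|=e(p(\tilde x))$, so $\tilde e=e\circ p_*$, with $p_*$ the counital map from the commutative diagram above. This is consistent with the statement that $p_*$ preserves the counit and the coproduct. Combining with the first step, the identity of $SH^*(W,k[\pi_1])$ factors as
\[
SH^*(W,k[\pi_1]) \xrightarrow{\tilde\mu} SH^*(W,k[\pi_1])\otimes SH^*(W,k[\pi_1]) \xrightarrow{(e\circ p_*)\otimes\mathrm{id}} k\otimes SH^*(W,k[\pi_1]).
\]
Since $SH^*(W)=0$, we have $e\circ p_*=0$, so the identity is zero and $SH^*(W,k[\pi_1(W)])=0$.

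There is a subtlety here. Over a field, the coproduct on homology needs the Künneth isomorphism $H(C\otimes C)\cong H(C)\otimes H(C)$, which is why $k$ is a field. Moreover, the inverse limit of tensor products is not the tensor product of inverse limits. I will avoid this by running the argument at the chain level at a fixed slope and composing with $p_*$ before taking limits. Concretely, this gives that $\tilde e\otimes\mathrm{id}$ applied to $\tilde\mu$, viewed as a map into the completed tensor product, is the identity in the limit, and the factorization through $e\circ p_*$ survives the limit.

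The main obstacle is the lifted counit relation, namely checking that the gluing and cobordism argument for the counit lifts coherently. The one-parameter family interpolating between capped pants and cylinders has ends that break into configurations with different topology. I must check that unique lifting holds uniformly along this family, so that the lifted boundary counts match. I will try to get this from the fact that every surface in the family has contractible ends and genus zero, so it has trivial image in $\pi_1(W)$ and lifts uniquely, depending continuously on the parameter.
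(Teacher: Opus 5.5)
Your argument is essentially the paper's: $SH^*(W)=0$ kills the counit, and counitality of $p_*$ gives $\tilde e=e\circ p_*=0$. The lifted counit axiom then forces the identity of $SH^*(W,k[\pi_1(W)])$ to vanish; the paper leaves this axiom implicit, and you justify it by unique lifting of genus-zero surfaces with contractible ends.

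The one step to tighten is the passage to the limit. Because $\tilde\mu$ lands in the completed tensor product, what you need is that the level-wise counit $e_H$ vanishes once the slope is large. This does not follow from the tautological vanishing of $e$ on $SH^*(W)=0$. It follows because $e_H$ is the transpose of the unit $1_H\in SH_*(W,H)$, and $1_H$ dies at finite slope since $SH_*(W,k)=0$ by duality.
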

\begin{proof}
    The symplectic cohomology $SH^*(W)=0$ implies that the counit $e$ satisfies $e=0$. Since the induced projection $p_*:SH(W,k[\pi_1(W)])\rightarrow SH(W)$ is counital, the lifted counit also satisfies $\tilde{e}=0.$ Therefore  $SH^*(W,k[\pi_1(W)])=0$
\end{proof}

Before concluding the section we would like to state a version of algebraic duality for symplectic homology of covering spaces in the flavor of theorem \ref{adual}.

Symplectic homology with $k[\pi_1]$ local coefficients  correspond to the direct limit of homology of the lifted complex $(SC_*(\tilde{W},H):=\bigoplus_ {p(\tilde{x})=x}k\tilde{x},\tilde{d} )$ where $\tilde{d}$ is defined by taking  unique lifts of the rigid moduli spaces involved in the differential $d$ in symplectic homology. Note that as mentioned in the proof of theorem \ref{modulesc}, this complex differs from the symplectic homology of the covering by not considering infinitely many generators in the fiber direction and is the compactly supported version of symplectic homology of covering spaces.

For a specific Hamiltonian $H$, similar to the differentials in standard symplectic homology and cohomology, $\tilde{d}$ and $\tilde{\delta}$ are defined using the same generators but differ depending on whether the trajectories flow into or out of the orbits. Therefore we adapt the argument in the   proof of \ref{adual} in \cite{ritter2013topological}(Section $3.8$) to prove a duality theorem with local coefficients.

\begin{prop}\label{adualc}
   Given a Liouville domain $W$, $SH_*(\tilde{W},H)\cong Hom(SH^*(W, k[\pi_1]),H,k).$
\end{prop}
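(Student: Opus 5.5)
We prove the statement at the chain level for a fixed admissible Hamiltonian $H$, following Ritter's argument for Theorem \ref{adual} (\cite{ritter2013topological}, Section 3.8). The point is that the homological complex of the universal cover is the full linear dual of the cohomological complex with $k[\pi_1]$ coefficients. Recall that
\[SC_*(\tilde{W},H)=\bigoplus_{x\in\mathcal{P}(H)}\prod_{p(\tilde{x})=x}k\,\tilde{x}, \qquad SC^*(W,H,k[\pi_1])\cong\bigoplus_{x\in\mathcal{P}(H)}\bigoplus_{p(\tilde{x})=x}k\,\tilde{x}.\]
Since $\mathcal{P}(H)$ is finite for fixed $H$, the space $\mathrm{Hom}_k\big(SC^*(W,H,k[\pi_1]),k\big)$ is canonically $\bigoplus_x\prod_{\tilde{x}}k$. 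This is exactly $SC_*(\tilde{W},H)$, via the pairing $\langle \tilde{x},\tilde{y}\rangle=\delta_{\tilde{x}\tilde{y}}$ extended to allow infinite sums in the fibre direction. The pairing is well defined because each element of the cohomological complex involves only finitely many lifts.

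Next I check that the differentials are adjoint: $\langle \tilde{d}\,a,b\rangle=\langle a,\tilde{\delta} b\rangle$. Both $\tilde{d}$ and $\tilde{\delta}$ are defined by counting unique lifts of the same rigid moduli spaces $\mathcal{M}(x_+,x_-)$. Fix lifts $\tilde{x}_+$ and $\tilde{x}_-$. The coefficient of $\tilde{x}_-$ in $\tilde{d}\tilde{x}_+$ counts the rigid trajectories in $\mathcal{M}(x_+,x_-)$ whose lift starting at $\tilde{x}_+$ ends at $\tilde{x}_-$. By unique lifting, these are the same trajectories whose lift from $\tilde{x}_-$ ends at $\tilde{x}_+$, and that set computes the coefficient of $\tilde{x}_+$ in $\tilde{\delta}\tilde{x}_-$. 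For a general element of the product, I extend $\tilde{d}$ componentwise. Finiteness of each coefficient follows from compactness of $\mathcal{M}(x_+,x_-)$, as in the definition of $\tilde{d}$ in Section \ref{2.1}. Hence $(SC_*(\tilde{W},H),\tilde{d})$ is isomorphic, as a chain complex, to the dual complex $\mathrm{Hom}_k\big((SC^*(W,H,k[\pi_1]),\tilde{\delta}),k\big)$.

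Because $k$ is a field, $\mathrm{Hom}_k(-,k)$ is exact. The universal coefficient theorem therefore gives $H_*\big(\mathrm{Hom}(C,k)\big)\cong \mathrm{Hom}(H^*(C),k)$ with no Ext term. This yields the claimed isomorphism $SH_*(\tilde{W},H)\cong \mathrm{Hom}\big(SH^*(W,H,k[\pi_1]),k\big)$ for each $H$. I would then verify that the lifted continuation maps on the two sides are adjoint under the pairing, by the same unique-lifting argument applied to continuation cylinders, which are contractible-end cylinders. Passing to limits would then give
\[\varinjlim_H \mathrm{Hom}(SH^*(H),k)\;\to\;\mathrm{Hom}\Big(\varprojlim_H SH^*(H),k\Big).\]
This last step is genuinely problematic in general, and I expect it to be the main obstacle. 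Ritter circumvents it in the untwisted case using action filtrations, or equivalently by using cofinal Hamiltonians with Mittag-Leffler-type control. Since the statement is phrased at fixed $H$, I would prove the fixed-$H$ version carefully. For the limit version I would try to adapt Ritter's filtration argument, but only as a remark.

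The subtle point in the fixed-$H$ argument is exactly the infinite-product issue. One must make sure that $\tilde{d}$, applied to an infinite fibre sum, gives each $\tilde{x}_-$ only finitely many contributions. That fact is what lets the dual of a direct sum be identified with the product and not some completion. I would justify it as follows. Fix $\tilde{x}_-$ and $x_+$; the contributions come from the finitely many rigid trajectories in $\mathcal{M}(x_+,x_-)$, lifted backwards from $\tilde{x}_-$, so only finitely many $\tilde{x}_+$ contribute.
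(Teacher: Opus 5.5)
Your proposal is correct and follows essentially the same route as the paper. You identify $\mathrm{Hom}_k$ of the direct-sum complex $SC^*(W,H,k[\pi_1])$ with the fibrewise-product complex $SC_*(\tilde{W},H)$ using finiteness of $\mathcal{P}(H)$, observe that $\tilde{d}$ and $\tilde{\delta}$ are transposes because both count the same unique lifts of rigid trajectories, and conclude by the universal coefficient theorem over the field $k$; your explicit finiteness check for $\tilde{d}$ on infinite fibre sums and your caution about passing to limits go slightly beyond what the paper writes out, but agree with it.
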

\begin{proof}
    We can rewrite $SC^*(W,H,k[\pi_1(W)]) $ as $\bigoplus_{x\in \mathcal{P}(H)}\bigoplus_{p(\tilde{x}=x)}k$ where $p:\tilde{W}\rightarrow W$ is the covering projection,by using the identification with the lifted complex of the universal cover. Now applying the $Hom$ functor to this group, we get $Hom(SC^*(W,H),k)\cong \underset{x\in\mathcal{P}(H)}{\bigoplus}\underset{p(\tilde{x})=x}{\Pi}Hom(k,k)$ since the $Hom$ functor sends direct sums to direct products and the first sum is taken over finitely many elements. Therefore we have $Hom(SC^*(W,H),k)\cong SC_*(\tilde{W},k)$ where the latter is the group involved in the definition of symplectic homology of covering space.

    Note that $\tilde{d}(\tilde{x})=\underset{\tilde{y}}{\Sigma}\tilde{\mathcal{M}}({\tilde{x},\tilde{y}})\tilde{y} $ and $\tilde{\delta}(\tilde{y})=\underset{\tilde{x}}{\Sigma}\tilde{\mathcal{M}}({\tilde{x},\tilde{y}})\tilde{x} $ where $\tilde{\mathcal{M}}({\tilde{x},\tilde{y}})$ is the lift of rigid trajectories in $\mathcal{M}(x,y)$ which start from $\tilde{y}$ and end in $\tilde{x}$. Therefore the matrices represented by $\tilde{d}$ and $\tilde{\delta}$ are transpose to each other. Therefore universal coefficients theorem implies that $SH_*(\tilde{W},H)\cong Hom(SH^*(W,H, k[\pi_1]),k)$. 
    \end{proof}

We cannot take limits to get rid of the dependence on the Hamiltonian since in general $Hom(\varprojlim M,N)\ncong\varinjlim Hom(M,N)$ although $Hom(\varinjlim M,N)\cong \varprojlim Hom(M,N)$. A possible workaround is to use a quadratic Hamiltonian $H_q$ and work with filtered symplectic homology and symplectic cohomology in the proof of \ref{adualc}. We should work with a filtration since we do not want to use a direct product in the definition and define filtered  symplectic cohomology $SH^*_{(-\infty,\alpha)}$. The proof of proposition \ref{adualc}  then implies that $SH_*^{(-\infty,\alpha)}(\tilde{W})\cong Hom(SH^*_{(-\infty,\alpha)}(W,k[\pi_1(W)]),k)$.

\begin{rem}\label{action bound}
    We can prove the applications that use the vanishing result \ref{vansc} without involving product structures explicitly, by the above discussion of using quadratic Hamiltonians which gives us the duality $SH_*^{(-\infty,\alpha)}(\tilde{W})\cong Hom(SH^*_{(-\infty,\alpha)}(W,k[\pi_1(W)]),k)$. The idea is to pick the action bound $\alpha$ large enough such that $SH_*^{(-\infty,\alpha)}(\tilde{W})$ vanishes. The existence of this bound $\alpha$ is guaranteed by the fact that we assume $SH_*({W})=0$ so we can increase our action window until the unit element gets killed. Therefore  by duality we have $SH^*_{(-\infty,\alpha)}(W,k[\pi_1(W)])=0$. This suffices for our applications if we use the filtered long exact sequence in symplectic cohomology given by  
\[\begin{tikzcd}
	{...} & {SH_{(-\infty,\epsilon)}^{-*}({W},k[\pi_1])} & {SH^{-*}_{(\epsilon,\alpha)}(W,k[\pi_1])} & {SH_{(-\infty,\alpha)}^{-*}(W,k[\pi_1])} & {...}
	\arrow[from=1-1, to=1-2]
	\arrow[from=1-2, to=1-3]
	\arrow[from=1-3, to=1-4]
	\arrow[from=1-4, to=1-5]
\end{tikzcd}\]

We identify $SC^{-*}({W},k[\pi_1])/SC^{-*}_{(>\epsilon)}({W},k[\pi_1])$ with $\frac{SC^{-*}({W},k[\pi_1])/SC^{-*}_{(>\alpha)}({W},k[\pi_1])}{SC^{-*}_{>\epsilon}({W},k[\pi_1])/SC^{-*}_{(>\alpha)}({W},k[\pi_1])}$ in deriving the above sequence. Note that $SH_{(-\infty,\epsilon)}^{-*}(W,k[\pi_1])\cong H_{*+n+1}(\tilde{W)}$  and $SH^{-*}_{(\epsilon,\alpha)}(W,k[\pi_1(W)])$ is generated by Reeb orbits by the action filtration constraint. This observation is enough to make conclusions regarding $H_{*+n+1}(\tilde{W})$ when symplectic cohomology vanishes and if there are index constraints on Reeb orbits.

\end{rem}

\section{Wrapped Floer homology} \label{3}
In this section we treat the relative version of result that the vanishing of symplectic homology implies the vanishing of symplectic homology with $\mathbb{Z}_2[\pi_1]$ coefficients. The argument of proving the result for wrapped Floer homology follows the same strategy as in the closed case using product structures in lifted Floer homologies. However in the open string version, product structures has not yet been defined in the lifted setting and is more difficult than the closed case. .

 Lagrangian Floer homology of the universal cover   was first introduced by Damian in \cite{damian2012floer}. The wrapped version was later treated in \cite{chantrainefloer} which is defined using bilinearized contact homology. We revisit  the construction in \cite{damian2012floer} and make necessary extensions to derive our result. Before doing so, we give a short review of wrapped Floer homology . More detailed accounts can be found in \cite{abouzaid2010open} and \cite{ritter2013topological} .Note we follow grading conventions as in \cite{lazarev2020contact}.

Let $W^{2n}$ be a Liouville domain with Liouville form $\lambda$ and $L$ be an exact Lagrangian in $W$ with a Legendrian boundary $\Lambda$ contained in the contact manifold $\partial W.$ More precisely by an exact Lagrangian $L$ in $W$ we mean a Lagrangian $L$ such that the Liouville form $\lambda$ restricted to $L$ is exact and the transverse intersection $\partial L=\partial W\pitchfork L$ is Legendrian ,that is $\lambda\vert\partial L=0 $. Furthermore using a Hamiltonian isotopy of L, without loss of generality, we can impose the condition that near the boundary $\partial L$, $\lambda|L$ vanishes.

Let $H$ be a quadratic Hamiltonian  and $J$ an admissible almost complex structure in the setup of section [\cite{ritter2013topological},18.2 ] or \cite{zhou2021symplectic} for the integrated maximum principle to work. We work with $\mathbb{Z}_2$ coefficients as it suffices for all our applications. However all of following theory can be upgraded to $\mathbb{Z}$ coefficients if the Lagrangian ${L}$ has vanishing Maslov class.

Let $\hat{L}:=L\bigcup_{\Lambda}[1,\infty)\times\Lambda$ be the completion of $L$ in the Liouville completion $\hat{W}$. For such a Hamiltonian we denote the Hamiltonian chords of $H$ by $C(H)$ and we define the chain group $CW(L,H,J):=\bigoplus_{\gamma\in C(H)}\mathbb{Z}_2\gamma$. The Hamiltonian chords comes with a $\mathbb{Z}/2\mathbb{Z} $ grading by the Conley Zehnder index $\mu$ of the chords which can be upgraded to $\mathbb{Z}$ grading when the Lagrangian has vanishing Maslov class.

Given two Reeb chords $x_-$ and $x_+$ of the Lagrangian $\hat{L}$, the moduli space of  wrapped trajectories is given by $\mathcal{M}(x_-,x_+):=\{u:\mathbb{R}\times [0,1]\rightarrow \hat{W}, \partial_s u+J(\partial_t u-X_H)=0, lim_{s\rightarrow\pm\infty} u(s,t) =x_\pm, u(s,0)\in\hat{L},u(s,1)\in \hat{L}\}/\mathbb{R}.$ Transversality and gluing implies that under a generic perturbation  $\mathcal{M}(x_+,x_-)$ forms a manifold with corners. Similar to section \ref{2}, the maximum principle ensures that the trajectories do not escape to infinity and that the moduli space is therefore compact. The differential $d:CW(L,H)\rightarrow CW(L,H)$  is defined by $$d(x_+)=\bigoplus_{x_-}|\mathcal{M}_0(x_+.x_-)|x_-$$ where $\mathcal{M}_0(x_+,x_-)$ represents rigid moduli spaces of wrapped trajectories. Similar arguments as in the closed case implies that $d\circ d=0$ and we denote the resulting homology group as   $HW(L)$.

Similar to the scenario in symplectic homology, we can associate an action filtration to the Reeb chords which gives us a filtered subcomplex of the wrapped Floer complex $CW(L,H)^{<k}$. This results in filtered wrapped Floer homology groups $HW(L)^{<k}$. By the choice of our Hamiltonian, the Hamiltonian chords of low action correspond to Morse critical points of $H$ in $L$  and the rest to contractible Reeb chords in the collar. Thereby for a small $\epsilon$, the filtered wrapped Floer homology group $HW(L)^{<\epsilon}$ is isomorphic to the Morse homology group and the subcomplex $CW(L,H)^+:= CW(L,H)/CW(L,H)^{<\epsilon}$ is generated by Reeb chords in the collar. We define the \textbf{positive wrapped Floer homology group} denoted by $HW^+(L)$ as the homology of $CW(L,H)$.By construction we have the following exact triangle.

\begin{prop}
    For an exact $L$ with vanishing Maslov class in a Liouville domain $W^{2n}$, there exists a long exact sequence of groups given by 
   
    \[\begin{tikzcd}
	{...} & {H^{n-k-2}(L)} & {HW_k(L)} & {HW_k^+(L)} & {H^{n-k-1}(L)} & {...}
	\arrow[from=1-1, to=1-2]
	\arrow[from=1-2, to=1-3]
	\arrow[from=1-3, to=1-4]
	\arrow[from=1-4, to=1-5]
	\arrow[from=1-5, to=1-6]
\end{tikzcd}\]

\end{prop}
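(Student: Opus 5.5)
The plan is to derive the sequence from the action filtration, exactly as for Proposition \ref{les} in the closed case, and then to fix the grading shift so that the Morse part appears as $H^{n-k-2}(L)$ in degree $k$.

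\textbf{Step 1: short exact sequence and limits.} Fix a Hamiltonian $H$ as above and $\epsilon>0$ smaller than the minimal action of a Reeb chord of $\Lambda$. The wrapped differential decreases action, so $CW(L,H)^{<\epsilon}$ is a subcomplex. This gives a short exact sequence
\[0\to CW(L,H)^{<\epsilon}\to CW(L,H)\to CW(L,H)^+\to 0,\]
and hence a long exact sequence in homology, with connecting map of degree $-1$. The continuation maps respect the filtration, and direct limits are exact. So it is enough to identify the homology of $CW(L,H)^{<\epsilon}$ in each degree $k$ for one such $H$, compatibly with continuation maps.

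\textbf{Step 2: the low-action part is Morse theory of $L$.} The generators of action $<\epsilon$ are the constant chords at critical points of $H|_L$, which we may take to be a $C^2$-small Morse function on $L$. Its gradient points outward along $\partial L$, because $H$ increases in $r$ on the collar. For $H$ time independent and $C^2$-small, and $J$ suitably chosen, the standard Floer argument applies: rigid strips between such chords have small energy and are $t$-independent, so they are gradient flow lines. Hence $CW(L,H)^{<\epsilon}$ is the Morse complex of $H|_L$. With outward gradient it computes $H_*(L,\partial L)$ or $H_*(L)$, depending on the direction convention; with the convention of \cite{lazarev2020contact}, in which Floer trajectories follow the positive gradient flow, it is $H_*(L,\partial L)$. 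Poincaré–Lefschetz duality then gives $H_i(L,\partial L)\cong H^{n-i}(L)$, with $\mathbb{Z}_2$ coefficients or over $\mathbb{Z}$ when $L$ is orientable.

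\textbf{Step 3: grading normalization.} The Maslov-zero hypothesis gives a $\mathbb{Z}$-grading. In the convention of \cite{lazarev2020contact}, a critical point of Morse index $i$ is to have chord degree $k$ with $n-i=n-k-2$, i.e. $i=k+2$, which places $H^{n-k-2}(L)$ in the slot of $HW_k^{<\epsilon}$. This shift must be checked by computing the Conley–Zehnder (Robbin–Salamon) index of a constant chord at a critical point of a small Morse function, with the same normalization used for $SH$, where the Morse part appears as $H_{*+n}(W,\partial W)$. Once the shift is fixed, the connecting map $HW_k^+(L)\to HW_{k-1}^{<\epsilon}$ lands in $H^{n-(k-1)-2}(L)=H^{n-k-1}(L)$. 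This is exactly the stated sequence.

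\textbf{Main obstacle.} I expect Step 3 to be the main obstacle, namely pinning down the index normalization for chords so that the shift is precisely $n-k-2$. I would first compute the index of a constant chord for a quadratic Morse function on $\mathbb{R}^n\subset\mathbb{C}^n$, and then cross-check against the cotangent-fibre example $HW(T^*_qQ)\cong H_{-*}(\Omega Q)$ in the same conventions. The identification of the low-action complex with the Morse complex in Step 2 is standard; it only requires choosing $J$ so that the maximum principle and transversality hold for $t$-independent solutions.
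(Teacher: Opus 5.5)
Your proposal is correct and is essentially the paper's argument. The paper obtains the sequence ``by construction'' from the short exact sequence $0\to CW(L,H)^{<\epsilon}\to CW(L,H)\to CW(L,H)^+\to 0$, identifying the low-action subcomplex with the Morse complex of $H|_L$ exactly as in your Steps 1--2. The paper does not verify the degree shift $n-k-2$ either: it takes that shift from Lazarev's grading conventions, so your Step 3 is a normalization check rather than a gap. Also, since the paper uses a single quadratic Hamiltonian, the direct limit in your Step 1 is unnecessary.
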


We can relate symplectic homology to wrapped Floer homology by the following result.
\begin{thm}[\cite{ritter2013topological}] For an exact Lagrangian $L$ in a Liouville domain $W$, the wrapped Floer homology $HW_*(L)$ forms a module over $SH_*(W)$. 
    \end{thm}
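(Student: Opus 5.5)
The plan is to build the closed-open map $\mathcal{CO}:SH_*(W)\otimes HW_*(L)\to HW_*(L)$ from a moduli space of \emph{half-pairs of pants}, following \cite{ritter2013topological}. Take the domain $S$ to be the strip $\mathbb{R}\times[0,1]$ with one interior puncture. Near the interior puncture we use a cylindrical end asymptotic to a Hamiltonian orbit $x$. The two boundary punctures are a strip end asymptotic to an input chord $a$ and a strip end asymptotic to an output chord $b$, and both boundary components are mapped to $\hat{L}$. We fix a subclosed one-form $\beta$ on $S$ with $d\beta\le 0$, whose weights at the ends match the slopes of the Hamiltonians used. We then count rigid solutions of the perturbed Floer equation $(du-X_H\otimes\beta)^{0,1}=0$. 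This defines
\[
\psi(x\otimes a)=\sum_b |\mathcal{M}(x,a;b)|\, b
\]
on chain level for fixed $(H,J)$. We work with $\mathbb{Z}_2$ coefficients throughout, as announced in the section.

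The analytic steps come next, in order. First, the maximum principle for admissible $(H,J)$ shows that solutions stay in a compact region; for quadratic Hamiltonians one uses the integrated maximum principle of \cite{ritter2013topological}. The energy identity with $d\beta\le0$ then bounds energy in terms of the actions of $x$, $a$ and $b$. Together these give Gromov compactness. Exactness of $W$ and $L$ rules out sphere and disc bubbling, and transversality is obtained for generic $J$ depending on the domain. The index formula $\dim\mathcal{M}(x,a;b)=\mu(x)+\mu(a)-\mu(b)-n$, up to the paper's grading shift, identifies which moduli spaces are rigid.

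Next we show $\psi$ is a chain map. The boundary of the one-dimensional moduli spaces consists of Floer cylinders breaking at the interior end and wrapped strips breaking at either boundary end. This gives $\psi\circ(d_{SC}\otimes1+1\otimes d_{CW})=d_{CW}\circ\psi$, so $\psi$ descends to homology. We then check compatibility with continuation maps as the slopes increase, using a homotopy of data $\beta$ and a one-parameter family of moduli spaces; this gives a map on the direct limits. The module axioms are proved by degenerating domains. Associativity $\psi(x*y\otimes a)=\psi(x\otimes\psi(y\otimes a))$ follows from a one-parameter family in which two interior punctures either collide, producing a pair of pants, or separate along the strip. Unitality follows by capping the interior puncture with a plane representing the unit $1\in SH_*(W)$, and comparing with the continuation strip, which induces the identity on $HW_*(L)$.

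The main obstacle is the analysis rather than the algebra: the moduli spaces must be compact and the relevant sums finite despite the wrapping. This requires choosing $\beta$ and the Hamiltonian weights so that $d\beta\le0$ holds together with the right slope inequalities at the three ends, so that the maximum principle and the action bounds apply simultaneously. I would handle this as Ritter does. Work with linear Hamiltonians of slopes $s_x+s_a\le s_b$, where $s_x$, $s_a$, $s_b$ are the slopes at the interior, input and output ends. Prove compactness at each finite slope, then pass to the limit. For the quadratic setup, appeal to the integrated maximum principle instead.
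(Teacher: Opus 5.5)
Your proposal is correct and is essentially the argument of \cite{ritter2013topological}, which the paper invokes by citation rather than reproving. The ingredients match: a TQFT operation on a strip with one interior puncture; $d\beta\le 0$ with output slope at least the sum of the input slopes, giving the maximum principle and energy bounds; the chain-map property from breaking at the three ends; and associativity and unitality from gluing $=$ composition, gluing a pair of pants, respectively the cap, into the interior end. This is the same TQFT framework the paper recalls from Ritter and then lifts to covers for its open-string operations in Section~\ref{3.1}.
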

\begin{rem}
    The above theorem implies that if symplectic homology vanishes for a manifold, then the wrapped Floer homology would also vanish for any exact Lagrangian by the existence of the unit element in the symplectic homology ring.
\end{rem}

\subsection{Wrapped Floer homology of covering spaces and  lifted TQFT maps.} \label{3.1}
We define wrapped Floer homology $HW(\tilde{L})$ for a  covering space $\tilde{L}\rightarrow \hat{L}$ following the construction  of \cite{damian2012floer} in the compact setting. Note that, another construction of wrapped Floer cohomology of the universal cover is done in  \cite{chantrainefloer} using bilinearized LCH. Another difference from section \ref{2.2}, is that we cannot lift the images of the holomorphic curves since they do not lie in $L$ and therefore we follow \cite{damian2012floer} in lifting the paths defined on the boundary side of the holomorphic strips. The novel part of this section is in the construction of a lifted open pair of pants  in the wrapped Floer homology of the covering space $\tilde{L}$.

Let $p:\tilde{L}\rightarrow \hat{L}$ be a covering of $\hat{L}$. Let $x,y\in C(H)$ be a Hamiltonian chord with endpoints in $ \hat{L}$.
The chain group is given by $$CW^{(0)}_*(\tilde{L}):\bigoplus_x\underset{p(\tilde{x}(0))=x(0)}{\Pi}\mathbb{Z}\tilde{x}(0).$$ To define the differential $\tilde{d}$, consider a wrapped trajectory $u\in\mathcal{M}(y,x)$ and define $\alpha_0(u)$  to be the path given by $u(s,0)$ extended to $[-\infty,\infty]$ in $L$ which starts at $x(0)$ and ends in $y(0)$ .
Let $\tilde{\mathcal{M}}(\tilde{y},\tilde{x})$ be the moduli space of lifted paths $\alpha_0(u)$ for $u\in \mathcal{M}(y,x)$ starting at $\tilde{x}$ and ending at $\tilde{y}$. For a compact rigid moduli space $\mathcal{M}(y,x)$, the  space $\sqcup_{\tilde{y}}\tilde{\mathcal{M}}(\tilde{y},\tilde{x})$ is finite by unique lifting. 
Therefore we can define a differential $\tilde{d}:CW^{(0)}_*(\tilde{L})\rightarrow CW^{(0)}_*(\tilde{L})$ given by $$\tilde{d}(\tilde{y})=\sum_{\tilde{x}}\tilde|{\mathcal{M}}(\tilde{y},\tilde{x})|\tilde{x}.$$ The proof that $d^2=0$ follows identically from the proof of the Lagrangian setting in \cite{damian2012floer}. We denote the homology of  $CW_*^{(0)}(\tilde{L})$ as the wrapped Floer homology of the covering space $HW_*^{(0)}(\tilde{L}).$

\begin{rem}
    
Note that the role of $s=0$ boundary side and $s=1$ side could be swapped in the definitions. In the above description we recover a chain complex $CW^{(1)}_*(\tilde{L})$ whose homology is denoted $HW^{(1)}(\tilde{L})$. However $HW^{(0)}(\tilde{L})$ need not be isomorphic to $HW^{(1)}(\tilde{L})$ unless one makes additional assumptions on the underlying topology (see remark \ref{product with restrictions}).
\end{rem}

Like in the non-lifted case, by filtering the wrapped complex by action, we have the following long exact sequence
\begin{prop}\label{les wcs}
     For a covering $\tilde{L}$ of an exact $L$ (with grading in $\mathbb{Z}_2$ if the Maslov class is not zero) in a Liouville domain $W^{2n}$, there exists a long exact sequence of groups given by 
   
    \[\begin{tikzcd}
	{...} & {H^{n-k-2}(\tilde{L})} & {{HW^{(0)}}_k(\tilde{L})} & {{HW^{(0)}}_k^+(\tilde{L})} & {H^{n-k-1}(\tilde{L})} & {...}
	\arrow[from=1-1, to=1-2]
	\arrow[from=1-2, to=1-3]
	\arrow[from=1-3, to=1-4]
	\arrow[from=1-4, to=1-5]
	\arrow[from=1-5, to=1-6]
\end{tikzcd}\]

\end{prop}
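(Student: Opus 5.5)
The proof mimics the non-lifted case: filter the lifted complex $CW^{(0)}_*(\tilde{L})$ by the action and check that the low-action piece is a Morse complex of the cover. First I will choose the quadratic Hamiltonian $H$ with $H|_L$ a $C^2$-small Morse function. Its low-action chords (action $<\epsilon$) are then the constant chords at critical points of $H|_{\hat L}$, and the remaining chords are Reeb chords of $\Lambda$ in the collar.

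Next I need the action filtration to be compatible with lifting. The differential $\tilde d$ counts exactly the same rigid strips $u\in\mathcal{M}(y,x)$ as $d$, with the lift determined by the path $\alpha_0(u)$. The action inequality for $u$ holds regardless of the lift, so $CW^{(0),<\epsilon}_*(\tilde L)$, spanned by lifts $\tilde x$ of low-action chords, is a subcomplex. Its quotient $CW^{(0),+}_*(\tilde L)$ defines $HW^{(0),+}(\tilde L)$. The short exact sequence $0\to CW^{(0),<\epsilon}\to CW^{(0)}\to CW^{(0),+}\to 0$ then gives the long exact sequence, once the first term is identified.

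The main step is identifying $H_*(CW^{(0),<\epsilon}_*(\tilde L))$ with $H^{n-*-\cdot}(\tilde L)$ in the same degrees as in the base proposition. For $C^2$-small $H$ and suitable $J$, I will use Floer's standard argument, as adapted to the wrapped setting in the base case, to show that rigid strips between constant chords are $t$-independent. They are therefore gradient flow lines of $H|_L$, with boundary path $\alpha_0(u)$ equal to the flow line itself. Lifting these paths to $\tilde L$ is then exactly lifting gradient trajectories of the pulled-back Morse function $\tilde f=H\circ p$, whose critical points are the lifts $\tilde x(0)$. So $CW^{(0),<\epsilon}$ is the Morse complex of $\tilde f$ on the noncompact manifold $\tilde L$. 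The direct product over each fiber, together with the direction of the differential (which, in the paper's convention, counts flow into $\tilde y$), matches Morse cochains of $\tilde L$, with locally finite sums guaranteed by unique lifting and compactness downstairs. I will then invoke the non-compact Morse theory statement, as in Damian's closed case: on the cover, a Morse function pulled back from a compact base computes the ordinary cohomology $H^*(\tilde L)$, the cohomology of the product-type complex. The grading shift by $n$ and the sign convention are copied from the unlifted proposition, since indices of lifts equal indices downstairs.

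I expect this last identification to be the main obstacle. The difficulty is that $\tilde L$ is noncompact and $\hat L$ has a cylindrical end, so one must check that the Morse complex built from products over fibers really computes $H^*(\tilde L)$ rather than a compactly supported or locally finite variant. I would handle it by noting that $\tilde f$ has gradient pointing outward along the lifted collar $p^{-1}(\Lambda\times[1,\infty))$, so $\tilde L$ deformation retracts onto $p^{-1}(L)$. I would then filter by lifts of a cellular structure of $L$: the product complex is $\mathrm{Hom}$ of the compactly supported (direct sum) Morse chain complex of $\tilde L$, which computes $H_*(\tilde L)$, and duality yields $H^*(\tilde L)$. With $\mathbb{Z}_2$ coefficients this dualization is exact, which matches the paper's working coefficients.
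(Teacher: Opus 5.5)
Your proposal is correct and follows the same route as the paper. The paper's proof is a one-line appeal to the unlifted case: filter the lifted complex by action. Your identification of the low-action piece supplies the detail the paper leaves implicit: fibrewise products with the lifted Morse coboundary of $\tilde f$ form $\mathrm{Hom}$ of the direct-sum Morse chain complex of $\tilde f$, which computes $H^*(\tilde L)$. Note also that this dualization does not need $\mathbb{Z}_2$ or field coefficients, because $\mathrm{Hom}(C_*,\mathbb{Z})$ of a free complex chain-equivalent to singular chains computes $H^*(\tilde L;\mathbb{Z})$ directly. That matters, since the paper later uses this sequence over $\mathbb{Z}$ in Theorem \ref{funda1}.
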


\label{modelriemannsurface}We describe the setup for defining the product and TQFT structures on $HW_*(L)$. For more details regarding the construction we refer to \cite{ritter2013topological} .  
Let $(S,j)$ be the closed unit disk $D$ with $n+1$ punctures  $a_1,...,a_n,b$ on the boundary $\partial D $. Near the punctures $a_i$ we can identify the neighborhoods around them with positive strip-like end $[0,\infty)\times[0,1]$ and around $b$ with negative strip-like ends $(-\infty,0]\times [0,1]$ .Let $\beta$ be a 1-form on $S$ which is equal to $A_idt$ and $ Bdt$  in the strip-like ends of $a_i$ and $b$ respectively, such that $\Sigma_i A_i\geq B , \beta|_{\partial S}=0$ and $\beta $ is exact near $\partial S$ and $d\beta(v,jv)\leq 0$ for all $v\in TS$. In case of the product one can choose $A_1=A_2=1$ and $B=2$, so that Hamiltonians correspond to $H,H$ and $2H$ in the strip like ends.
\begin{rem}
    For the TQFT products we use $\mathbb{Z}_2$ coefficients since it suffices for our main application, however the theory is equally valid over $\mathbb{Z}$ coefficients by considering orientations as in \cite{ritter2013topological} if the Lagrangian is spin.
\end{rem}

We give short recap of TQFT maps following \cite{ritter2013topological}. Let $H$ be a quadratic Hamiltonian and let $\mathcal{M}(x_1,x_2..,x_n;y,H,\beta)$ be the collection of maps $u:S\rightarrow\hat{W}$  such that $u(\partial S)\subset \hat{L}$ and $(du-X_H\otimes\beta)^{0,1}=0$ converging to Hamiltonian chords $x_i,y$ for Hamiltonians $A_iH,BH$ respectively. In essence $(du-X_H\otimes\beta)^{0,1}=0$ reduces to the Floer equation for the Hamiltonians $A_iH,BH$ in the strip like ends of $x_i,y$ respectively. For a generic $J$, the moduli space $\mathcal{M}(x_1,...,x_n,y,H,\beta)$ is cut out transversely and forms a manifold with corners. We use the one form $\beta$  to ensure that a maximum principle works with more than one positive or negative punctures to ensure compactness of the moduli space.

The TQFT product $\psi_S:CW(L,A_1H)\otimes....\otimes CW(L,A_nH)\rightarrow CW(L,BH)$ is defined by the map $\psi(x_1\otimes.....\otimes x_n)=\oplus|\mathcal{M}_0(x_1....,x_n;y,H,\beta)\vert y$ where $\mathcal{M}_0(x_1....,x_n;y,H,\beta)$ denotes a compact rigid moduli space. The product satisfies the Leibniz rule $d(\psi(x_1,....,x_n))=\psi(dx_1,...x_n)+.....\psi(x_1,....,dx_n)$ which corresponds to the breaking of index $1$ curves at the $n+1$ strip-like ends. Therefore $\psi_s$ descends to homology and we get a TQFT product $\psi:HW(L)\otimes......\otimes HW(L)\rightarrow HW(L)$.

We can now proceed with the construction of a lifted TQFT map $\tilde{\psi_S}$ on $HW^{(0)}(\tilde{L})$. Let $\gamma_0$ denote the path from $a_1$ to $b$ on $\partial S$. For $u\in$ $\mathcal{M}(x_1,...,x_n;y,H,\beta)$, we denote by $u(\gamma_0)$ the image of $\gamma_0$ in $\hat{L}$ extended to $[-\infty,\infty]$. Note that $u(\gamma_0)$ is a path from $x_1(0)$ to $y(0)$.  

Given a compactified moduli space $\mathcal{M}(x_1,....,x_n;y,H,\beta)$ we  denote by $\mathcal{M}_{\widetilde{x_1(0)},\widetilde{y(0)}}(x,y,z)$ to be the moduli space comprising the lifts of $u(\gamma_0)$ that start at $\widetilde{x_1(0)}$ and end at $\widetilde{y(0)}$. We define a map $\tilde{\psi_S}$ given by $$\tilde{\psi_S}:{CW}^{(0)}(\tilde{L},A_1H)\otimes ...\otimes CW(L,A_nH)\rightarrow \tilde{CW^{(0)}}(\tilde{L},BH)$$
$$\tilde{\psi_S}(\widetilde{x_1(0)},...,x_n)=\Sigma\vert\mathcal{M}_{\widetilde{x_1(0)},\widetilde{y(0)}}(x_1,....,x_n;y,H,\beta)\vert\widetilde{y(0)} $$

\begin{thm}
    The map $\tilde{\psi_S}:CW^{(0)}(\tilde{L},A_1H)\otimes ...\otimes CW(L,A_nH)\rightarrow \tilde{CW^{(0)}}(\tilde{L},BH)$ induces a well defined map $\Psi_S:{HW}^{(0)}(\tilde{L})\otimes .....\otimes HW(L)\rightarrow {HW}^{(0)}(\tilde{L})$ at the level of homology. 
\end{thm}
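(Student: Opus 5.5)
We need to show three things: $\tilde{\psi_S}$ is well defined at chain level (finite coefficients and a map into the product-type complex), it satisfies a lifted Leibniz rule so it descends to homology, and it is compatible with continuation maps so it passes to the limit over Hamiltonians. We follow Damian's argument for the lifted differential, but with the boundary path $\gamma_0$ from $a_1$ to $b$ in place of the $t=0$ side of a strip.

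\emph{Well-definedness.} Fix $\widetilde{x_1(0)}$ and chords $x_2,\dots,x_n,y$. The rigid moduli space $\mathcal{M}_0(x_1,\dots,x_n;y,H,\beta)$ is compact, hence finite. By unique path lifting in the covering $p:\tilde{L}\to\hat{L}$, each $u$ gives exactly one lift of $u(\gamma_0)$ starting at $\widetilde{x_1(0)}$. So only finitely many $\widetilde{y(0)}$ receive a nonzero coefficient, and the coefficients are finite $\mathbb{Z}_2$ counts. We extend $\tilde{\psi_S}$ to elements of $CW^{(0)}(\tilde{L},A_1H)$ that are infinite in the fiber direction by linearity over the fiber. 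For each fixed $\widetilde{y(0)}$, only the lifts $\widetilde{x_1(0)}$ obtained by lifting the reversed paths $u(\gamma_0)^{-1}$ from $\widetilde{y(0)}$ contribute, and there are finitely many of them. So the output is again a legitimate element of the product $\prod_{p(\tilde y(0))=y(0)}$.

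\emph{Chain map property.} This is the main step. Consider the one-dimensional moduli spaces $\mathcal{M}_1(x_1,\dots,x_n;y)$ and their Gromov–Floer compactification. The boundary consists of broken configurations in which a rigid strip splits off at one of the $n+1$ strip-like ends. We work with the lifted moduli space: a one-parameter family $u_\tau$ comes with the continuous family of paths $u_\tau(\gamma_0)$. Fixing the starting lift $\widetilde{x_1(0)}$, the endpoints of the lifts vary continuously in the discrete fiber, hence are constant on each component. At a breaking, the path $u_\tau(\gamma_0)$ converges to a concatenation. We split into cases.
\begin{enumerate}
\item Breaking at $a_1$: the limit is $\alpha_0(v)\cdot u'(\gamma_0)$ for a strip $v$ at $x_1$. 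The lift of this concatenation is the lift for $\tilde d$ followed by the lift for $\tilde\psi_S$, which gives the term $\tilde{\psi_S}(\tilde d\,\widetilde{x_1(0)},\dots)$.
\item Breaking at $b$: the limit is $u'(\gamma_0)\cdot\alpha_0(v)$, giving $\tilde d\,\tilde{\psi_S}$.
\item Breaking at $a_i$ with $i\ge 2$: the strip attaches along a boundary arc that does not meet $\gamma_0$. The path $u(\gamma_0)$ is then unchanged up to homotopy rel endpoints, giving $\tilde{\psi_S}(\dots,dx_i,\dots)$ with the unlifted differential on $CW(L,A_iH)$.
\end{enumerate}
Here we must check the ordering of punctures on $\partial D$: $\gamma_0$ should be the arc from $a_1$ to $b$ that contains no other puncture. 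This is why only the first slot is lifted. We must also check that gluing identifies the homotopy class of the limiting boundary path with the concatenation. This uses only that the glued boundary path converges uniformly, after reparametrization, to the broken path, as in \cite{damian2012floer}. Counting the boundary points of each compact one-dimensional component mod $2$ yields the lifted Leibniz rule
\[
\tilde d\,\tilde{\psi_S}=\tilde{\psi_S}(\tilde d\otimes 1\otimes\cdots)+\sum_{i\ge2}\tilde{\psi_S}(\cdots\otimes d\otimes\cdots).
\]
Hence $\tilde{\psi_S}$ descends to homology for fixed Hamiltonians.

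\emph{Limits.} Finally we pass to the direct limit over slopes. Lifted continuation maps are defined the same way, by lifting $u(s,0)$ for continuation strips. A homotopy-of-data argument shows that $\tilde{\psi_S}$ commutes with continuation maps up to chain homotopy. For this we use one-dimensional parametrized moduli spaces, whose boundary is lifted by the same concatenation principle. This gives the induced map $\Psi_S:HW^{(0)}(\tilde{L})\otimes\cdots\otimes HW(L)\to HW^{(0)}(\tilde{L})$.

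The main obstacle is the boundary analysis of the lifted one-dimensional spaces: we must verify that lift endpoints are locally constant along $\mathcal{M}_1$ and match the concatenated lifts at every breaking type. We must also check the finiteness for infinite fiber sums, in the reverse direction from $\widetilde{y(0)}$.
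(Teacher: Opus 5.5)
Your proposal is correct and follows essentially the paper's argument. Both partition the one-dimensional moduli space according to the homotopy class (equivalently, the lifted endpoint) of the boundary path $u(\gamma_0)$. Both then use Damian's Lemma 2.7 to identify the limiting paths at breakings at $a_1$, at $a_i$ with $i\ge 2$, and at $b$ with the corresponding concatenations, and conclude by counting boundary points mod $2$ class by class. Your additional checks, namely finiteness of the output for elements that are infinite in the fiber direction and compatibility with continuation maps, are not spelled out in the paper, but they are correct.
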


\begin{proof}
    The map $\tilde{\psi_S}$ induces a map at the level of homology if we can ensure that the Leibniz rule $$\tilde{\psi_S}(\tilde{d}\tilde{x_0},...,x_n)+\tilde{\psi_S}(\tilde{x_0},dx_n)=\tilde{d}\tilde{\psi_S}(\tilde{x_0},...,x_n)$$ holds in the chain level. We prove the above statement in the case of the two positive punctures $x_1$, $x_2$ and a negative puncture $y$ since the same proof will directly generalize to case of $n$ positive punctures. 
    
     After a generic perturbation of $J$, let $\mathcal{M}_1(x_1,x_2,y,H,\beta)$ be the moduli space of index 1 curves, which is a 1-dimensional manifold by standard transversality arguments. The boundary of $\overline{\mathcal{M}_1(x_1,x_2,y,H,\beta)}$ is given by the breaking of strip-like ends where we denote the broken moduli spaces at the strip-like end of $x_1,x_2$ and $z$ by $T_1(x_1,x_2,y), T_2(x_1,x_2,y)$ and $T_3(x_1,x_2,y)$   respectively (see figure \ref{fig}). 

    \begin{figure}
    \centering

\begin{tikzpicture}[scale=10, transform shape]

\draw[ultra thin, blue, xshift=-8pt] (1.37,3.5) -- (1.37,3.43);
  \draw[ultra thin, xshift=-8pt] (1.4,3.5) -- (1.4,3.43);
  
\draw[ultra thin, blue, xshift=-8pt] (1.28,3.32) -- (1.33,3.37);
\draw[ultra thin, xshift=-8pt] (1.3,3.3) -- (1.35,3.35);

  \draw[ultra thin, xshift=-8pt] (1.41,3.35) -- (1.45,3.3);
  \draw[ultra thin, xshift=-8pt] (1.44,3.36) -- (1.48,3.31);
 
\draw[ultra thin, blue, xshift=-8pt] (1.33,3.37) .. controls (1.36,3.4) and (1.36,3.4) .. (1.37,3.43);
  \draw[ultra thin, xshift=-8pt] (1.4,3.43) .. controls (1.4,3.4) and (1.41,3.4) .. (1.44,3.36);
  \draw[ultra thin, xshift=-8pt] (1.41,3.35) .. controls (1.39,3.38) and (1.37,3.38) .. (1.35,3.35);

  \draw[ultra thin, xshift=-8pt] (1.3,3.3) -- (1.3,3.2);
  \draw[xshift=-8pt] (1.28,3.3) -- (1.28,3.3);
  \draw[ultra thin, blue, xshift=-8pt] (1.28,3.32) -- (1.28,3.2);
  
\node[xscale=0.1, yscale=0.1, xshift=-8pt] at (0.95,3.19) {$x_1(0)$};
\node[xscale=0.1, yscale=0.1, xshift=-8pt] at (0.97,3.33) {$x_1'(0)$};
\node[xscale=0.1, yscale=0.1, xshift=-8pt] at (1.44,3.5) {$y(0)$};
\node[xscale=0.1, yscale=0.1, xshift=-8pt] at (1.63,3.2) {$x_2(1)$};
\node[xscale=0.1, yscale=0.1, xshift=-8pt] at (1.58,3.5) {$y(1)$};

\node[xscale=0.1, yscale=0.1, text=blue, xshift=-8pt] at (1,3.42) {$v(\gamma_{x_1x_1'y})$};

\node[xscale=0.15, yscale=0.15, xshift=-8pt] at (1.15,3.13) {$v \in T_1$};

  \draw[ultra thin, blue, xshift=2.6pt] (1.37,3.5) -- (1.37,3.43);
  \draw[ultra thin, xshift=2.6pt] (1.4,3.5) -- (1.4,3.43);
  
\draw[ultra thin, blue, xshift=2.6pt] (1.28,3.32) -- (1.33,3.37);
\draw[ultra thin, xshift=2.6pt] (1.3,3.3) -- (1.35,3.35);

  \draw[ultra thin, xshift=2.6pt] (1.41,3.35) -- (1.45,3.3);
  \draw[ultra thin, xshift=2.6pt] (1.44,3.36) -- (1.48,3.31);
 
\draw[ultra thin, blue, xshift=2.6pt] (1.33,3.37) .. controls (1.36,3.4) and (1.36,3.4) .. (1.37,3.43);
  \draw[ultra thin, xshift=2.6pt] (1.4,3.43) .. controls (1.4,3.4) and (1.41,3.4) .. (1.44,3.36);
  \draw[ultra thin, xshift=2.6pt] (1.41,3.35) .. controls (1.39,3.38) and (1.37,3.38) .. (1.35,3.35);
  
\draw[ultra thin, xshift=2.6pt] (1.45,3.3) -- (1.45,3.24);
  \draw[ultra thin, xshift=2.6pt] (1.48,3.31) -- (1.48,3.24);

\node[xscale=0.1, yscale=0.1, xshift=2.6pt] at (1.72,3.24) {$x_1(0)$};
\node[xscale=0.1, yscale=0.1, xshift=2.6pt] at (1.05,3.52) {$y(0)$};
\node[xscale=0.1, yscale=0.1, xshift=2.6pt] at (1.16,3.52) {$y(1)$};
\node[xscale=0.1, yscale=0.1, xshift=2.6pt] at (2.01,3.26) {$x_2(1)$};
\node[xscale=0.1, yscale=0.1, text=blue, xshift=2.6pt] at (1.36,3.41) {$v(\gamma_{x_1y})$};
\node[xscale=0.1, yscale=0.1, xshift=2.6pt] at (1.63,3.31) {$x_2'(1)$};
\node[xscale=0.15, yscale=0.15, xshift=2.6pt] at (1.45,3.13) {$v \in T_2$};

\draw[ultra thin, blue, xshift=14pt, yshift=-2pt] (1.37,3.5) -- (1.37,3.43);
  \draw[ultra thin, xshift=14pt, yshift=-2pt] (1.4,3.5) -- (1.4,3.43);
  
\draw[ultra thin, blue, xshift=14pt, yshift=-2pt] (1.28,3.32) -- (1.33,3.37);
\draw[ultra thin, xshift=14pt, yshift=-2pt] (1.3,3.3) -- (1.35,3.35);

  \draw[ultra thin, xshift=14pt, yshift=-2pt] (1.41,3.35) -- (1.45,3.3);
  \draw[ultra thin, xshift=14pt, yshift=-2pt] (1.44,3.36) -- (1.48,3.31);
 
\draw[ultra thin, xshift=14pt, yshift=-2pt] (1.33,3.37) .. controls (1.36,3.4) and (1.36,3.4) .. (1.37,3.43);
  \draw[ultra thin, xshift=14pt, yshift=-2pt] (1.4,3.43) .. controls (1.4,3.4) and (1.41,3.4) .. (1.44,3.36);
  \draw[ultra thin, xshift=14pt, yshift=-2pt] (1.41,3.35) .. controls (1.39,3.38) and (1.37,3.38) .. (1.35,3.35);
  \draw[ultra thin, blue, xshift=14pt, yshift=-2pt] (1.37,3.5) -- (1.4,3.6);
  \draw[ultra thin, xshift=14pt, yshift=-2pt] (1.4,3.5) -- (1.43,3.6);

\node[xscale=0.1, yscale=0.1, xshift=14pt, yshift=-2pt] at (1.28,3.31) {$x_1(0)$};
\node[xscale=0.1, yscale=0.1, xshift=14pt, yshift=-2pt] at (1.75,3.44) {$y'(0)$};
\node[xscale=0.1, yscale=0.1, xshift=14pt, yshift=-2pt] at (1.93,3.43) {$y'(1)$};
\node[xscale=0.1, yscale=0.1, xshift=14pt, yshift=-1.1pt] at (1.2,3.33) {$x_1(1)$};
\node[xscale=0.1, yscale=0.1, text=blue, xshift=14pt, yshift=-2pt] at (1.72,3.37) {$v(\gamma_{x_1y'y})$};

\node[xscale=0.15, yscale=0.15, xshift=14pt, yshift=-3pt] at (1.83,3.14) {$v \in T_3$};
\node[xscale=0.1, yscale=0.1, xshift=200pt, yshift=-2pt] at (1.13,3.56) {$y(0)$};
\node[xscale=0.1, yscale=0.1, xshift=14pt, yshift=-2pt] at (1.93,3.54) {$y(1)$};

\end{tikzpicture}

    \caption{Breaking at strip-like ends\label{fig}}
    \label{fig:placeholder}
\end{figure}

   Recall that $u(\gamma_0)$ denotes the path from $x_1(0)$ to $y(0)$. For $u$ in  $\mathcal{M}_1(x_1,x_2,y,H,\beta) $, we call the smooth path $u(\gamma_0)$ from $x_1(0)$ to $y(0)$ as $u(\gamma_{x_1y})$. Let $v\in T_1(x_1,x_2;y)$ be a broken pair of pants where the holomorphic strip in the end of $x_1$ breaks at $x_1'$. For $v$ denote by $v(\gamma_0)$ or $v(\gamma_{x_1x_1'y})$, the broken path in $L$ obtained from $v$ as the concatenation on the side $t=0$  from $x_1(0)$ to  $y(0)$ with a breaking at $x_1'(0)$. 

    \begin{lem}[\cite{damian2012floer} Lemma 2.7]\label{damian lemma} For a sequence $\{v_n\}_{n\in \mathbb{N}} \in \mathcal{M}_1(x_1,x_2;y)$ converging to the broken pair of pants $v$, for $n$ large enough the paths $v_n(\gamma_{x_1y})$ are homotopic to the broken path $v(\gamma_{x_1x_1'y})$
        \end{lem}

    In \cite{damian2012floer}, the above lemma is stated for holomorphic strips between  chords and not in the case of the pair of pants. 
    However since the breaking in our case happens in the holomorphic strip-like end, the theorem applies without much modification . Without loss of generality the lemma also holds for $v\in T_3(x_1,x_2,y)$ where the broken path is denoted by $v(\gamma_{x_1y'y})$ (see Figure \ref{fig}).
 For the homotopy class $[\gamma]$ define 
 $$\mathcal{M}_1(x_1,x_2,y,[\gamma]):=\{u\in \mathcal{M}_1(x_1,x_2,y,H,\beta):[u(\gamma_0)]=[\gamma]\}.$$ The paths $u(\gamma_0)$ for $u$ in a connected component of $\mathcal{M}(x_1,x_2,y)$ are homotopic and the previous lemma implies that each broken curve $v$ in $\overline{\partial\mathcal{M}(x_1,x_2,y)}$ occurs as the boundary of $\overline{\mathcal{M}(x_1,x_2,y,[v(\gamma_0)])}$. This implies that the Leibniz rule for the product $\psi_S$ can be proven by partitioning $\overline{\mathcal{M}_1(x_1,x_2,y,H,\beta)}$ based on the homotopy class of the path $u(\gamma_0)$ and showing that the  boundaries adds up to zero in each class that is $$|\partial\overline{\mathcal{M}(x_1,x_2,y,[\gamma])}|=0.$$

 We now can use this to prove the Leibniz rule $$\tilde{\psi_S}(\tilde{d}\widetilde{x_1(0)},x_2)+\tilde{\psi_S}(\widetilde{x_1(0)},dx_2)+\tilde{d}\tilde{\psi_S}(\widetilde{x_1(0)},x_2)=0$$ holds. Let $\tilde{y_0}$ be a lift of $y(0)$ and let $n(\widetilde{x_1(0)},\widetilde{y(0)})$ be the coefficient of  $\widetilde{y(0)}$ in $\tilde{\psi_S}(\tilde{d}\widetilde{x_1(0)},x_2)+\tilde{\psi_S}(\widetilde{x_1(0)},dx_2)+\tilde{d}\tilde{\psi}(\widetilde{x_1(0)},x_2)$. Note that fixing $\widetilde{x_1(0)}$ and $\widetilde{y(0)}$ fixes a homotopy class say $[\gamma]$ of paths between $x_1$ and $y$ in the base. The contribution of $\tilde{\psi_S}(\tilde{d}\widetilde{x_1(0)},x_2)$ to $n(\widetilde{x_1(0)},\widetilde{y(0)})$ is given by $T_1(x_1,x_2,y)\cap \overline{\mathcal{M}(x_1,x_2,y,[\gamma])}$, that of $\tilde{\psi}(\widetilde{x_1(0)},dx_2)$ is given by $T_2(x_1,x_2,y)\cap \overline{\mathcal{M}(x_1,x_2,y,[\gamma])}$ and of $\tilde{d}\tilde{\psi_S}(\widetilde{x_1(0)},x_2)$ is given by $T_3(x_1,x_2,y)\cap \overline{\mathcal{M}(x_1,x_2,y,[\gamma])}$. Then $$n(\widetilde{x_1(0)},\widetilde{y(0)})=|T_1(x_1,x_2,y)\cap \overline{\mathcal{M}(x_1,x_2,y,[\gamma])}|+|T_2(x_1,x_2,y)\cap \overline{\mathcal{M}(x_1,x_2,y,[\gamma])}|+$$ $$|T_3(x_1,x_2,y)\cap \overline{\mathcal{M}(x_1,x_2,y,[\gamma])}|=|\partial \overline{\mathcal{M}(x_1,x_2,y,[\gamma])}|=0.$$ Therefore $\psi_S$ induces a well defined map in homology.
\end{proof}

We adapt the "gluing=composition" result of \cite{ritter2013topological}[Section 16.10], to what we need in the lifted setting. Recall the setting \ref{modelriemannsurface} where $(S,\beta)$ is a Riemann surface with $n+1$ punctures and $\beta$ is a 1-form with weights $A_1,...A_n,B$ at the strip-like ends of the punctures. We denote similarly $S'$ to be a punctured disk with $n'+1$ punctures, with $n'$ positive punctures and one negative puncture. We choose a 1 form $\beta'$ satisfying the properties of our model setup with weights $A'_1,...A'_{n'},B'$ (\ref{modelriemannsurface}). We make the additional assumption that $B'=A_k$ for $k$ since we would like the glue the negative end of $S'$ to a positive end of $S$. From the construction in \cite{ritter2013topological}[Section 16.10][Theorem 16.12], after gluing $S$ to $S'$ with a parameter $\lambda$ at the puncture $k$ to obtain a surface $S\#_\lambda S'$ and attaching trivial cylinders (that induce identity by continuation map) at the other ends of $S$, we obtain that $\psi_{S\#_{\lambda}S'}=\psi_S\circ\psi_{S'}$ for $\lambda>>0$.

Note that the proof of lemma \ref{damian lemma} in \cite{damian2009constraints}[lemma 3.16] only uses the general framework of breaking and can be adapted without much modifications. This implies that given holomorphic curves $v_\lambda$ that contribute to the coefficient of $y$ in $\psi_{S\#_{\lambda}S'}(x_1,...,x_{n'})$ and the broken curve $v\#v'$ which contributes  to the coefficient of $y$ in $\psi_S\circ\psi_{S'}(x_1,....,x_{n'})$,  the paths evaluated from $x_1$ to $y$ denoted by $v_\lambda(\gamma _{x_1 y})$ and $v\#v'(\gamma_{x_1x_1'y})$ are homotopic for $\lambda$ large enough. This allows us to conclude the following corollary.

\begin{cor}["Gluing=Composition"] \label{gluing=composition}
    For ${S\#_{\lambda}S'}$ and $\lambda$ large enough, $\tilde{\psi}_{S\#_{\lambda}S'}=\tilde{\psi}_S\circ\tilde{\psi}_{S'}$ for lifted TQFT operations.
\end{cor}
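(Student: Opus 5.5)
The plan is to reduce the lifted statement to the non-lifted ``gluing=composition'' result of \cite{ritter2013topological}[Theorem 16.12], together with a homotopy statement for boundary paths in the spirit of Lemma \ref{damian lemma}. The key observation is that a coefficient of $\tilde{\psi}$ is not a total count of curves. It is a count of curves $u$ whose boundary path $u(\gamma_0)$, lifted from a fixed $\widetilde{x_1(0)}$, ends at a prescribed $\widetilde{y(0)}$. Equivalently, it is the count of curves in a fixed homotopy class $[\gamma]$ of paths in $\hat{L}$ from $x_1(0)$ to $y(0)$. So it suffices to show that the gluing bijection between rigid broken configurations $(v,v')$ and rigid glued curves $v_\lambda$ respects this homotopy class.

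First, I fix generic data and recall the non-lifted gluing result. For $\lambda$ large, the rigid moduli space $\mathcal{M}_0(x_1,\dots,x_{n'},\dots;y,S\#_\lambda S')$ is in bijection with pairs $(v,v')$, where $v'$ is a rigid $S'$-curve with output $x'$ at the glued puncture $k$ and $v$ is a rigid $S$-curve with input $x'$ at that puncture. Trivial cylinders (continuation identities) sit at the other ends. This bijection is given by the gluing map, and by compactness (Gromov--Floer convergence) every $v_\lambda$ with $\lambda\to\infty$ arises this way.

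Second, I check compatibility of the boundary paths $\gamma_0$. On $S\#_\lambda S'$, the distinguished boundary arc runs from the first input to the output. If the glued puncture is $k=1$, this arc is the concatenation of $\gamma_0$ on $S'$, from $x_1$ to $x'$, with $\gamma_0$ on $S$, from $x'$ to $y$. The analogue of Lemma \ref{damian lemma}, as noted above via \cite{damian2009constraints}[Lemma 3.16], then gives that $v_\lambda(\gamma_{x_1y})$ is homotopic to the broken path $v'(\gamma_0)\# v(\gamma_0)$. Lifting, the endpoint of the lift of $v_\lambda(\gamma_0)$ starting at $\widetilde{x_1(0)}$ equals the endpoint obtained by first lifting $v'(\gamma_0)$ to reach $\widetilde{x'(0)}$ and then lifting $v(\gamma_0)$ from there. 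This is exactly the coefficient of $\widetilde{y(0)}$ in $\tilde{\psi}_S(\tilde{\psi}_{S'}(\widetilde{x_1(0)},\dots),\dots)$.

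Third, I treat the case $k\neq 1$ separately. Here $S'$ is glued into a non-distinguished input, and the $t=0$ arc from $a_1$ to $b$ on the glued surface passes only through $S$ and trivial strips. The lifted input therefore stays in the first slot of $\tilde{\psi}_S$, and $\tilde{\psi}_{S'}$ acts by the unlifted operation $\psi_{S'}$ on $CW(L)$. Up to reparametrization the path is again homotopic to $v(\gamma_0)$, with trivial cylinder contributions being constant paths. Summing over classes then gives the identity $\tilde{\psi}_{S\#_\lambda S'}=\tilde{\psi}_S\circ\tilde{\psi}_{S'}$ at chain level, up to the continuation identities, and hence on homology.

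The main obstacle is the homotopy claim for the glued boundary arc. Lemma \ref{damian lemma} is stated for breaking along strips, whereas here the neck is a strip-like end between two higher-punctured disks. I would argue this directly: in the neck region $[-\lambda,\lambda]\times[0,1]$, the glued curve $v_\lambda$ is $C^0$-close on the $t=0$ boundary to the broken pair, since exponential convergence to the chord $x'$ holds on both sides. Away from the neck, $v_\lambda$ is $C^0$-close to $v$ and $v'$. A small-distance homotopy within a Weinstein neighborhood of $\hat{L}$, projected back to $\hat{L}$, then identifies the paths. For $\lambda$ large, this holds uniformly over the finitely many rigid curves.
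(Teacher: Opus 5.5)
Your proposal is correct and follows the paper's own route: it combines Ritter's non-lifted gluing$=$composition bijection with the adaptation of Damian's lemma. That lemma shows that $v_\lambda(\gamma_{x_1y})$ is homotopic to the concatenated broken boundary path, so the lifted endpoints (and hence the lifted coefficients) agree for $\lambda$ large; your separate treatment of gluing at a slot $k\neq 1$ and your $C^0$-closeness sketch of the homotopy claim make explicit what the paper leaves as ``can be adapted without much modification.''
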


\begin{thm}[Theorem \ref{mod}] \label{module}
    The lifted wrapped Floer homology of a covering $\tilde{L}$, $ HW^{(0)}(\tilde{L})$ forms a module over the the wrapped homology of the base $HW(L)$.
\end{thm}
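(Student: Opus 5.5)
The module structure will be the lifted open pair of pants. I take $S$ to be the disk with two positive punctures $a_1,a_2$ and one negative puncture $b$, with weights $A_1=A_2=1$ and $B=2$. The lifted TQFT map $\tilde{\psi}_S$ then gives, by the preceding theorem, a well-defined map
$$\Psi_S: HW^{(0)}(\tilde{L})\otimes HW(L)\rightarrow HW^{(0)}(\tilde{L}).$$
In this map the lifted input sits at $a_1$, whose boundary arc $\gamma_0$ runs from $a_1$ to $b$ along the $t=0$ side. Before anything else I would check that $\tilde{\psi}_S$ is well defined on the chain group, which consists of infinite products in the fiber direction. Fix $\widetilde{y(0)}$. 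Only finitely many pairs $(x_1,x_2)$ contribute, by action and compactness. Unique path lifting, run backwards from $\widetilde{y(0)}$, then gives finitely many $\widetilde{x_1(0)}$. So the coefficient of each $\widetilde{y(0)}$ is a finite sum and the product is well defined. I also have to compose with the continuation maps $CW(L,2H)\to CW(L,H)$, which are lifted in the same way by lifting the $t=0$ boundary path of continuation strips. The same quadratic-Hamiltonian rescaling argument as in \cite{ritter2013topological} then makes the weights compatible at the level of homology.

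The next step is associativity,
$$\Psi(\Psi(\tilde a,b),c)=\Psi(\tilde a,\mu(b,c)),$$
where $\mu$ is the ordinary product on $HW(L)$. I would proceed as in \cite{ritter2013topological}. Both sides arise from gluings of two pairs of pants, giving a disk $S''$ with three positive punctures and one negative puncture. The left side glues $S$ to $S$ at the first input. The right side glues $S$ to the second input of $S$. In both cases the lifted input is at the first puncture, and $\gamma_0$ runs from that puncture to the output along the same boundary component. Corollary \ref{gluing=composition} identifies each composite with $\tilde{\psi}_{S''_\lambda}$ for the respective glued surface and large $\lambda$.

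It then remains to show that the two glued surfaces give the same map on homology. They lie in the same connected family of conformal structures and 1-forms on the four-punctured disk. The one-parameter family of moduli spaces over a path joining them gives a chain homotopy. For the lifted version I would partition the parametrized moduli space by the homotopy class of the path $u(\gamma_0)$. This works because $\gamma_0$ is always the boundary arc from the lifted puncture to the output. The Damian-type Lemma \ref{damian lemma} then shows that broken configurations, including Deligne--Mumford degenerations in the interior of the family, lie in the closure of the component with the matching class. Hence the boundary count vanishes class by class, exactly as in the proof of the Leibniz rule. I expect this step to be the main obstacle. One must check that when the domain degenerates at an interior point of the family, so that it splits along a chord not adjacent to the lifted puncture, the concatenated $t=0$ path still represents the same homotopy class. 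This should hold because such splittings occur on the side of $\gamma_0$ or off it, and in either case the path is a concatenation of limiting paths.

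Finally I would prove unitality. The unit $1\in HW(L)$ is given by the disk with one negative puncture, that is, the minimum of the Morse function in the low-action part. Gluing it into the second input of $S$ produces a strip, which is a continuation cylinder. The lifted version of this continuation map is the identity on $HW^{(0)}(\tilde{L})$ after the limit, again by Corollary \ref{gluing=composition} and the lifted continuation maps. Combining these steps, $\Psi$ makes $HW^{(0)}(\tilde{L})$ a unital right module over $HW(L)$.
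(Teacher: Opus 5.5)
Your proposal is correct and follows the same route as the paper. Associativity comes from applying Corollary \ref{gluing=composition} to the two gluings of pairs of pants, plus independence of the lifted operation from $(j,\beta,J)$; unitality comes from gluing the cap to get a continuation strip. You supply more than the paper does: the finiteness check on infinite fiber products, and a lifted chain homotopy partitioned by the homotopy class of $u(\gamma_0)$, where the paper simply asserts independence. The Deligne--Mumford issue you flag as the main obstacle does not arise if the path of $(j,\beta)$ stays inside the open moduli space of four-punctured disks, where only strip-end breakings occur.
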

\begin{proof}
   Let $(P,j,\beta)$ be a pair of pants and , then using corollary \ref{gluing=composition} we have $${\Psi}_P(\widetilde{x_1(0)},x_2.x_3)=\Psi_P(\widetilde{x_1(0)},\Psi_{P'}(x_2,x_3))={\Psi}_{P\#_\lambda P'}(x_1(0),x_2,x_3)$$ for $P'$ another model open pair of pants whose negative end can be glued to  the positive end of P. Since the choice of TQFT operations $\Psi_S$ do not depend on $j,\beta,J$, we have proved associativity of the module structure. 
 Note that for  model Riemann surface $E$ with one positive puncture and no negative punctures  induces the TQFT map $$\psi_E:\mathbb{Z}_2\rightarrow HW(L).$$ The unit $e$ is given by  $\psi_E(1)$. Therefore again using the "gluing=composition" lemma \ref{gluing=composition} , we have ${\Psi_P}(\widetilde{x_1(0)},e)={\Psi}_{P\#_\lambda E}(\widetilde{x_1}(0))=\widetilde{x_1}(0).$ The last equality follows since after gluing we get a continuation cylinder at the chain level which corresponds to identity at homology.
\end{proof}

\begin{rem}
    One should also be able to prove that ${HW}^{(0)}(\tilde{L})$ forms a module over $HW(L)$ by adapting the lifting arguments to the setting of the $A_\infty$ product $\mu_3$ in the wrapped Fukaya category of $L$ by varying the conformal structure in the domain in contrast to a fixed $j$ in the TQFT approach which introduces additional breakings correponding to the boundary of the Deligne-Mumford moduli space, to prove the associativity of the product in $HW(L)$(cf \cite{auroux2013beginner}).
\end{rem}

We can now deduce our main vanishing result.
\begin{cor}\label{Vanishing WH}
    Let $L$ be an exact Lagrangian in a Liouville domain $W$ such that the wrapped Floer homology of $L$, $HW(L)$ vanishes. Then the lifted wrapped Floer homology of the a covering $\tilde{L}$, $HW^{(0)}(\tilde{L})$ vanishes. 
\end{cor}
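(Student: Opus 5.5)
The plan is to deduce the corollary directly from the module structure of Theorem \ref{module}, using the unit of $HW(L)$. First, observe that $HW(L)$ is a unital ring: the unit $e=\psi_E(1)$ comes from the model surface $E$ with one positive puncture and no negative punctures. If $HW(L)=0$, then in particular $e=0$ in homology, that is, $e$ is represented by a boundary at the chain level.

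Next, recall from the proof of Theorem \ref{module} that for every class $[\tilde{a}]\in HW^{(0)}(\tilde{L})$ we have $\Psi_P([\tilde{a}],e)=[\tilde{a}]$. This identity is obtained from the ``gluing=composition'' statement (Corollary \ref{gluing=composition}): gluing $E$ to the second positive end of the pair of pants $P$ gives a continuation strip, which induces the identity on homology. On the other hand, $\Psi_P$ is well defined on homology and bilinear. The Leibniz rule in the preceding theorem shows that $\tilde{\psi}_P(\tilde{a},\cdot)$ sends boundaries to boundaries when $\tilde{a}$ is a cycle, so $\Psi_P([\tilde{a}],0)=0$. Since $e=0$ in $HW(L)$, we conclude that $[\tilde{a}]=\Psi_P([\tilde{a}],e)=0$. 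As $[\tilde{a}]$ was arbitrary, $HW^{(0)}(\tilde{L})=0$.

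The only point needing care is the compatibility of slopes and of the direct limit. The product lands in $CW^{(0)}(\tilde{L},BH)$ with $B=A_1+A_2$, so the identity $\Psi_P(\cdot,e)=\mathrm{id}$ holds only after composing with the continuation map from slope $A_1H$ to slope $BH$. I would therefore phrase the argument at the level of the colimit, or of the quadratic Hamiltonian with the rescaling identification used by Ritter. In that setting the continuation maps in the lifted complex are again defined by lifting boundary paths, exactly as in the definition of $\tilde{d}$. I expect this bookkeeping to be the main obstacle, and I would handle it by noting that the lifted continuation maps commute with the lifted TQFT maps, again by Corollary \ref{gluing=composition}.
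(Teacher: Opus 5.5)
Your proposal is correct and follows the paper's argument. Theorem \ref{module} makes the unit $e\in HW(L)$ act as the identity on $HW^{(0)}(\tilde{L})$, so $HW(L)=0$ forces $e=0$ and hence $HW^{(0)}(\tilde{L})=0$. Your extra remarks on matching slopes (the glued surface gives a continuation map from $A_1H$ to $BH$, which must be identified with the identity through lifted continuation maps) handle bookkeeping that the paper leaves implicit in the proof of Theorem \ref{module}.
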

\begin{proof}
    From \ref{module}, we know there exists a unit element $e$ in $HW(L)$ which acts as identity on $HW^{(0)}(\tilde{L})$. Therefore vanishing of $HW(L)$ forces $e=0$ and therefore $HW^{(0)}(\tilde{L})=0$.
\end{proof}
\begin{rem}\label{product with restrictions}
    Note that if group $\pi_2(W,L)=0$, one can show that $CW^{(0)}(\tilde{L})$ is isomorphic to $CW^{(1)}(L)$ at the chain level since  the paths induced on either side of the strip are homotopic in $W$ and hence in $L$. After identifying the groups one can similarly use the homotopy induced by open pair of pants to define a product structure on $HW^{(0)}(\tilde{L})\cong HW^{(1)}(\tilde{L})$ and in more generality $A_\infty $ products at chain level. 
\end{rem}

\section{Applications}\label{4}
We prove the theorems in the introduction using the vanishing results of the previous sections.
\subsection{Dynamical Convexity and topology} \label{4.1}
Kwon-Oba proved in \cite{kwon2024dynamically}  that given a topologically simple Liouville filling $W$ with vanishing $SH_*(W)$ of a  dynamically convex contact manifold, the $W$
   is a homology ball, that is $H_*(W,\partial W)=H_*(D^{2n},\partial D^{2n})$. We use both symplectic homology and symplectic cohomology to get similar information on  $\partial W$ which does not depend on the filling $W.$ 

\begin{thm}[Theorem \label{hom} \ref{homotop}] Let $M^{2n-1}$ be a topologically simple dynamically convex contact manifold that admits an exact filling $W$ with vanishing $SH_*(W)$. Then $M$ is a homology sphere.
    
\end{thm}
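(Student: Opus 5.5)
We want $H_*(M)\cong H_*(S^{2n-1})$. The plan is to play the long exact sequence for symplectic homology (Proposition \ref{les}) against its dual for symplectic cohomology (Proposition \ref{les2}). Dynamical convexity forces the positive parts to vanish in a range of degrees, while the vanishing of $SH$ kills the middle terms. Poincaré--Lefschetz duality then converts the information about $W$ into information about $M=\partial W$.

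\textbf{Step 1 (homology side).} Since $SH_*(W)=0$, Proposition \ref{les} gives $SH^+_*(W)\cong H_{*+n-1}(W,\partial W)$. Because $M$ is topologically simple ($\pi_1(M)\to\pi_1(W)$ injective, $c_1=0$), every Reeb orbit that is contractible in $W$ is contractible in $M$. Dynamical convexity then says all generators of $SC^+_*$ have degree $\mu_{CZ}-n+3\cdot 0\ge$ some threshold; with the paper's grading $\mu_{CZ}\ge n+1$ shifts to degree $\geq 1$ after normalization. Modulo checking the grading shift carefully, I expect the conclusion $SH^+_k(W)=0$ for $k\le 0$ (in the normalization where $H_{*+n}(W,\partial W)$ appears). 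This gives $H_j(W,\partial W)=0$ for $j\le n-1$, and Lefschetz duality yields $H^{2n-j}(W)=0$, i.e.\ $H^i(W)=0$ for $i\ge n+1$.

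\textbf{Step 2 (cohomology side).} By Theorem \ref{adual}, $SH^*(W;k)=0$ over any field. Proposition \ref{les2} then gives $SH^{-*}_+(W)\cong H_{*+n+1}(W)$. The same index bound shows $SH^{-*}_+$ is supported in the complementary range, so $H_i(W;k)=0$ for $i\ge n+1$. Combined with Step 1 and universal coefficients over all fields (including the Bockstein-type argument to pass to $\mathbb{Z}$), this should give $\tilde H_i(W)=0$ for all $i\neq n$ and $H_n(W)$ controlled.

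\textbf{Step 3 (middle degree).} Here I would sharpen Step 1 using the strict bound $\mu_{CZ}\ge n+1$, which rules out one more degree and so kills $H_n(W,\partial W)$ as well. Duality then kills $H^n(W)$, so $W$ is a $k$-homology ball for every field $k$, hence an integral homology ball. Finally, the long exact sequence of the pair $(W,\partial W)$, together with $H_*(W,\partial W)\cong H^{2n-*}(W)$, gives $\tilde H_i(M)\cong H_{i+1}(W,M)=0$ except for $i=2n-1$. Thus $M$ is a homology sphere.

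\textbf{Main obstacle.} The hardest part is the grading bookkeeping: matching the shift in Propositions \ref{les}, \ref{les2} with the dynamical convexity bound so that the two exact sequences together cover every degree, especially the borderline degrees $n$ and $n-1$. A secondary difficulty is justifying that the non-contractible orbits in $M$ do not contribute, which is where topological simplicity is used. To handle torsion, I would work over $\mathbb{Q}$ and over every $\mathbb{F}_p$ separately.
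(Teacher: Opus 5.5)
Your overall strategy is the paper's: use $SH_*(W)=0$ and Theorem \ref{adual} to kill the middle terms of Propositions \ref{les} and \ref{les2}, use dynamical convexity to kill the positive terms in a range, and finish with the long exact sequence of the pair $(W,\partial W)$. Your remark that topological simplicity is what lets dynamical convexity apply to orbits contractible in $W$ is correct; the paper leaves it implicit.

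However, as written the argument does not close, and the failure is exactly the grading you deferred. With the ranges you derive, Step 1 gives $H^i(W;k)=0$ for $i\ge n+1$, and Step 2 gives $H_i(W;k)=0$ for $i\ge n+1$. Over a field $H^i(W;k)\cong \mathrm{Hom}(H_i(W;k),k)$, so these are the same statement, not complementary ones. Step 3 adds only degree $n$. Nothing controls $H_i(W;k)$ for $1\le i\le n-1$, so the conclusion that $W$ is a $k$-homology ball does not follow.

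The missing point is that the normalization of Propositions \ref{les} and \ref{les2} puts no shift on the Reeb generators. An orbit sits in degree $\mu_{CZ}$, and the shift by $n$ is carried entirely by the Morse terms. As a sanity check on $B^{2n}$: the connecting map must send the index-$(n+1)$ orbit class onto $H_{2n}(B,\partial B)=H_{(n+1)+n-1}$. Hence dynamical convexity gives $SH^+_i(W;k)=0$ and $SH^i_+(W;k)=0$ for all $i\le n$. Then:
\begin{itemize}
\item $SH_*=0$ yields $H_j(W,\partial W;k)\cong SH^+_{j-n+1}(W;k)=0$ for every $j\le 2n-1$;
\item $SH^*=0$ yields $H_i(W;k)\cong SH^{n+1-i}_+(W;k)=0$ for every $i\ge 1$.
\end{itemize}
There is no borderline degree to sharpen. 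The sequence of the pair immediately gives $\tilde H_i(\partial W;k)=0$ for $i\le 2n-2$, and finite generation plus universal coefficients passes to $\mathbb{Z}$. This is the paper's proof. With the correct grading, the first sequence plus Lefschetz duality would already suffice; the paper uses the second sequence in place of duality.
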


\begin{proof}
    Let $k$ be an arbitrary coefficient field. Dynamical convexity implies  $SH_i^+(W,k)=0$ and $SH^i_+(W,k)=0$ for $i<n+1$ and using exactness of the long exact sequence of symplectic homology (Proposition \ref{les}) with the vanishing $SH_*$ assumption forces $H_i(W,\partial W,k)=0$ for $i<2n.$ Moreover $SH_*(W,k)=0$ implies $SH^*(W,k)=0$ by Theorem \ref{adual}. A similar exactness argument using the long exact sequence of symplectic cohomology (Proposition \ref{les2}) implies that $H_i(W,k)=0 $ for $i>0$. The long exact sequence of homology $$...\rightarrow H_i(\partial W,k)\rightarrow H_i(W,k)\rightarrow H_i(W,\partial W)\rightarrow H_{i-1}(\partial W,k)\rightarrow ...$$ implies $\partial W$ is a homology sphere over arbitrary field coefficients. Universal coefficients theorem thereby implies $\partial W$ is a homology sphere over $\mathbb{Z}$ since the homology is finitely generated as $\partial M$ is a compact manifold.
\end{proof}

\begin{thm} \label{4.2}
    Let $M^{2n-1}$ be a dynamically convex contact manifold such that there exists an exact filling $W$ of $M$ with $SH_*(W)=0$ and $\pi_1(W)$ is finite. Then, $\pi_k(W)\cong \pi_k(M)$ for $1<k<2n-1$.
\end{thm}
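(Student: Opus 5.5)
Since $\pi_1(W)$ is finite, the universal cover $\tilde{W}$ is a compact Liouville domain with boundary $\tilde{M}=\partial\tilde{W}$. This boundary is the cover of $M$ induced by $\pi_1(M)\to\pi_1(W)$, and it is a finite cover of $M$. The plan is to run the homological argument of Theorem \ref{hom} on $\tilde{W}$, using the $k[\pi_1]$-coefficient results of Section \ref{2}, and then pass from homology to homotopy with Hurewicz and Whitehead.

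\textbf{Step 1: vanishing on the cover.} By Corollary \ref{loccoeff} (over a field $k$), $SH_*(W)=0$ gives $SH_*(W,k[\pi_1])=0$. Because $\pi_1(W)$ is finite, the paper's discussion shows that this group is $SH_*(\tilde{W},k)$. Theorem \ref{adual} gives $SH^*(W,k)=0$, and Theorem \ref{vansc} then gives $SH^*(W,k[\pi_1])=0$.

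\textbf{Step 2: index bounds.} Generators of the lifted positive complexes are lifts of contractible Reeb orbits of $M$ that stay contractible in $W$. Dynamical convexity should therefore give $SH^+_i(W,k[\pi_1])=0$ and $SH^i_+(W,k[\pi_1])=0$ for $i<n+1$, exactly as in Theorem \ref{hom}. I need to check one point here. Orbits contractible in $W$ but not in $M$ carry no index bound. Finite $\pi_1(W)$ does not rule them out. I would first try to handle this by working with orbits contractible in $\tilde{M}$, or by invoking the lower bound via $c_1$ torsion for all orbits whose iterates are contractible. Granting Step 2, the lifted exact sequences (the lifted version of Proposition \ref{les}, and Proposition \ref{lesloc2}) give $H_i(\tilde{W},\tilde{M};k)=0$ for $i<2n$ and $\tilde{H}_i(\tilde{W};k)=0$. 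This holds for every field $k$, so it also holds over $\mathbb{Z}$ by finite generation. Hence $\tilde{W}$ is an integral homology ball, and by the pair sequence $\tilde{M}$ is an integral homology sphere $S^{2n-1}$.

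\textbf{Step 3: homotopy.} $\tilde{W}$ is simply connected with vanishing reduced homology, so Hurewicz and Whitehead make it contractible. This gives $\pi_k(W)=\pi_k(\tilde{W})=0$ for $k>1$, which is Theorem \ref{homotopy}'s first half. For $M$, it suffices to show $\pi_1(\tilde{M})=0$. Then $\tilde{M}$ is a simply connected homology sphere, hence a homotopy sphere, so $\pi_k(M)=\pi_k(\tilde{M})=0=\pi_k(W)$ for $1<k<2n-1$.

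The main obstacle is $\pi_1(\tilde{M})=0$. I would use Lefschetz duality on the homology ball $\tilde{W}$ with local coefficients. Take any cover of $\tilde{M}$, or more robustly pass to the universal cover $\hat{M}$ of $\tilde{M}$. The condition $H_1(\tilde{W},\tilde{M})=0$ only gives $H_1(\tilde{M})=0$. So I would instead repeat Step 1 with the cover $\tilde{W}$ and an arbitrary flat $k[\pi_1(\tilde{M})]$-type local system pulled from the boundary collar. Equivalently, I would use that the Reeb orbit index constraint together with $SH=0$ kills $H_1$ of every finite cover. If this fails, I would fall back on the filling-independence of $SH^+$ for covers (Proposition \ref{adccover}-type arguments). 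This is the step I expect to require the most care.
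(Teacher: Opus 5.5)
\textbf{Verdict: genuine gap.} Your reduction is sound modulo the two points you flag, but the step $\pi_1(\tilde{M})=0$ is a real gap, and none of your fallback ideas can fill it.

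Note that $\pi_1(\tilde{M})=K:=\ker(\pi_1(M)\to\pi_1(W))$. All your Floer-theoretic input (Corollary \ref{loccoeff}, Theorem \ref{vansc}, the lifted long exact sequences) is twisted by local systems pulled back from $W$, so it is trivial on $K$. Once you pass to the simply connected (by your Step 3, contractible) $\tilde{W}$, it only computes untwisted homology of $(\tilde{W},\tilde{M})$. That gives $H_1(\tilde{M})=0$, i.e.\ $K$ is perfect, and nothing more.

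Your fallbacks do not help:
\begin{itemize}
\item A local system on $\tilde{M}$ that is nontrivial on $K$, or a nontrivial cover of $\tilde{M}$, cannot extend over the simply connected $\tilde{W}$. Floer cylinders and pairs of pants pass through the interior, so there is no twisted complex in which to ``repeat Step 1''.
\item Killing $H_1$ of every finite cover would not suffice either, since a perfect $K$ may have no nontrivial finite quotients at all.
\item Filling-independence as in Proposition \ref{adccover} needs a second filling, with matching boundary cover, on which the answer is already known. You do not have one.
\end{itemize}
The index problem you raise in Step 2 is the same phenomenon: the orbits with no index control are exactly those whose free homotopy class lies in $K$.

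\textbf{Comparison with the paper.} The paper's proof uses only $SH_*(W,\mathbb{Z}[\pi_1])=0$, with no symplectic cohomology; your contractibility of $\tilde{W}$ is more than the statement needs. From this it gets $H_k(\tilde{W},\partial\tilde{W})=0$ for $k<2n$, deduces that $\partial\tilde{W}$ is connected and $\pi_1(\tilde{W},\partial\tilde{W})=0$, and then invokes relative Hurewicz to get $\pi_k(\tilde{W},\partial\tilde{W})=0$ for $k<2n$.

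That last step tacitly needs $\pi_1(\partial\tilde{W})$ to act trivially. Since $\pi_1(\tilde{W})=0$, the boundary map $\pi_2(\tilde{W},\partial\tilde{W})\to\pi_1(\partial\tilde{W})$ is onto. Relative Hurewicz then identifies $H_2(\tilde{W},\partial\tilde{W})$ only with the quotient of $\pi_2(\tilde{W},\partial\tilde{W})$ by the $\pi_1(\partial\tilde{W})$-action, which here is its abelianization. So its vanishing again says only that $K$ is perfect. A contractible manifold bounded by a non-simply-connected homology sphere (e.g.\ a Mazur manifold) shows the inference fails in general.

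So the paper does not contain the idea you are missing. Its conclusion $\pi_1(\partial\tilde{W})\cong\pi_1(\tilde{W})=0$ is exactly your obstacle.

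\textbf{Repair.} The argument closes once you add the hypothesis that $\pi_1(M)\to\pi_1(W)$ is injective (the topological simplicity assumed elsewhere in the paper). Then $\tilde{M}$ is the universal cover of $M$, and orbits contractible in $W$ are contractible in $M$, so Step 2 holds. Either your homotopy-sphere argument or the paper's relative Hurewicz argument then finishes.
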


  \begin{proof}

      Let $\tilde{W}$ denote the universal covering of $W$.Since $M$ is dynamically convex, $SH_k^+(W,\mathbb{Z}[\pi_1])=0$ for $k\leq n$ since there are no generators. The condition $\pi_1(W)$ is finite implies $\tilde{W}$ is compact. After using Lefshetz duality to identify the  cohomology ${H^k}(\tilde{W})$ and $H_{2n-k}(\tilde{W},\partial \tilde{W})$, the canonical long exact sequence with $\mathbb{Z}[\pi_1]$ coefficients is given by

  \[\begin{tikzcd}[cramped]
	{...} & {H_{*+n}(\tilde{W},\partial \tilde{W})} & {SH_{*}(W,\mathbb{Z}[\pi_1])} & {SH_{*}^{+}(W,\mathbb{Z}[\pi_1])} & {H_{*+n-1}(\tilde{W},\partial \tilde{W})} & {...}
	\arrow[from=1-1, to=1-2]
	\arrow[from=1-2, to=1-3]
	\arrow[from=1-3, to=1-4]
	\arrow[from=1-4, to=1-5]
	\arrow[from=1-5, to=1-6]
\end{tikzcd}\] 
    
$SH(W)=0$ implies $SH(W,\mathbb{Z}[\pi_1])=0$ by corollary \ref{loccoeff}.  The long exact sequence implies ${H_{k}(\tilde{W},\partial \tilde{W})}=0$ for $k<2n$. By examining the part $H_1(\tilde{W},\partial \tilde{W}) \rightarrow H_0(\tilde{W})\rightarrow H_0(\partial\tilde{W})$   of the long exact sequence of homology groups, we can conclude that $H_0(\tilde{W})\rightarrow H_0(\partial\tilde{W})$ is injective since $H_1(\tilde{W},\partial \tilde{W})=0$. Therefore injectivity forces $H_0(\partial\tilde{W})=\mathbb{Z}$ since $H_0(\tilde{W})=\mathbb{Z}.$ We conclude that the boundary $\partial\tilde{W}$ is connected.

      Examining the part of long exact sequence of the relative homotopy groups of the pair $(\partial \tilde{W}, \tilde{W}) $ given by  $\rightarrow \pi_1(\tilde{W})\rightarrow\pi_1(\tilde{W},\partial\tilde{W})\rightarrow\pi_0(\partial\tilde{W})\rightarrow \pi_0(\partial W) $, we deduce that $\pi_1(\tilde{W},\partial\tilde{W})=0.$

 Relative Hurewicz theorem implies ${\pi_{k}(\tilde{W},\partial \tilde{W})}=0$ for $k<2n$. This implies $\pi_k(\tilde{W})\cong \pi_k(\partial \tilde{W})$ for  $k<2n-1$ using the long exact sequence of relative homotopy groups. Since projection from a covering space induces isomorphisms on $\pi_k$ for $k\geq 2$, we get $\pi_k(W)\cong \pi_k(M)$ for $1<k<2n-1$.
      \end{proof}  
  We prove similarly to theorem \ref{hom} by using symplectic homology and cohomology with local coefficients ,that for dynamically convex contact manifolds $M$ that admit a filling with vanishing symplectic homology,  $\pi_k(M)=0$ for $1<k<2n-1.$

  \begin{thm}[Theorem \ref{homotop}] \label{homotopy}
      Let $M^{2n-1}$ be a dynamically convex contact manifold that admits an exact filling $W$ with vanishing symplectic homology, then $\pi_i(W)=0$ for $i>1$. If $W$ has a finite fundamental group, then $\pi_k(M)=0$ for $1<k<2n-1$.
  \end{thm}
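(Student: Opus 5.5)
We first show that the universal cover $\tilde{W}$ is acyclic, then use the Hurewicz theorem to get $\pi_i(W)=0$ for $i>1$. For the boundary statement we combine this with Theorem \ref{4.2}.

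Let $k$ be an arbitrary field and $\tilde{W}\to W$ the universal cover. Since $SH_*(W)=0$, Theorem \ref{adual} gives $SH^*(W,k)=0$. Theorem \ref{vansc} then gives $SH^*(W,k[\pi_1(W)])=0$. If one wants to avoid the coproduct, Remark \ref{action bound} gives the filtered version $SH^*_{(-\infty,\alpha)}(W,k[\pi_1])=0$ for $\alpha$ large, which is enough here. Now plug this into the long exact sequence of Proposition \ref{lesloc2}. All generators of $SH^{-*}_+(W,k[\pi_1])$ are lifts of contractible Reeb orbits, and dynamical convexity gives them Conley--Zehnder index at least $n+1$. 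This is the same index count used in Theorem \ref{hom}, now applied to the positive cohomological part of Proposition \ref{les2}. That part is only known to be free of generators in the relevant degree range, so exactness gives $H_j(\tilde{W},k)=0$ for all $j>0$. The bookkeeping we must check is the following:
\begin{itemize}
\item the generators of $SH^{-*}_+$ sit only in degrees $-*\geq n+1$, with the grading conventions of Proposition \ref{les2};
\item hence the connecting map $H_{*+n+1}(\tilde{W})\to SH^{-*}_+$ and the map $SH^{-*}\to H_{*+n}(\tilde{W})$ force $H_{m}(\tilde{W},k)=0$ for every $m\geq 1$.
\end{itemize}
Since $\tilde{W}$ is homotopy equivalent to a CW complex of dimension at most $2n$, this holds over every field $k$, including $\mathbb{Q}$ and all $\mathbb{F}_p$. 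When $\pi_1$ is infinite the homology need not be finitely generated, so we do not appeal to the universal coefficient theorem. Instead we argue that a chain complex of free abelian groups whose homology vanishes over $\mathbb{Q}$ and over every $\mathbb{F}_p$ has vanishing integral homology. Indeed, $H_m(\tilde{W};\mathbb{Z})\otimes\mathbb{F}_p$ injects into $H_m(\tilde{W};\mathbb{F}_p)$, so each $H_m(\tilde{W};\mathbb{Z})$ is a torsion group that is $p$-divisible for all $p$. The Tor term gives no $p$-torsion. Together these give $\tilde{H}_*(\tilde{W},\mathbb{Z})=0$.

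Since $\tilde{W}$ is simply connected with vanishing reduced integral homology, the Hurewicz theorem shows $\tilde{W}$ is weakly contractible, hence contractible. So $\pi_i(W)=\pi_i(\tilde{W})=0$ for $i>1$.

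For the second claim, assume $\pi_1(W)$ is finite. Theorem \ref{4.2} applies and gives $\pi_k(M)\cong\pi_k(W)$ for $1<k<2n-1$. Combined with the first part, $\pi_k(M)=0$ in that range.

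The step I expect to be the main obstacle is the degree bookkeeping in Proposition \ref{lesloc2}. The issue is whether dynamical convexity, which only bounds indices from below, really kills $H_m(\tilde{W})$ in all positive degrees $m$ rather than only in a half range. In the homological version (Theorem \ref{hom}) the lower bound gave vanishing of $H_i(W,\partial W)$ for $i<2n$. The cohomological sequence reverses the role of the grading, and this is exactly what converts the same bound into vanishing of absolute homology $H_i(W)$ for $i>0$. I would check the grading shift $-*\leftrightarrow *+n$ carefully against the analogous step in Theorem \ref{hom}. If necessary I would work with the filtered sequence of Remark \ref{action bound}, where the quotient $SH^{-*}_{(\epsilon,\alpha)}$ is manifestly generated by Reeb orbits of index $\geq n+1$.
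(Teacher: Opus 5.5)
Your proposal is correct and follows the paper's proof step for step: algebraic duality and Theorem \ref{vansc} give $SH^*(W,k[\pi_1])=0$; Proposition \ref{lesloc2} then yields $H_m(\tilde{W};k)\cong SH^{n+1-m}_+(W,k[\pi_1])$, which has no generators for $m\geq 1$, so the bookkeeping you flagged works out; the tensor/Tor argument over $\mathbb{Q}$ and all $\mathbb{F}_p$ passes to $\mathbb{Z}$ (this is just the universal coefficient theorem, which needs no finite generation); and Hurewicz together with Theorem \ref{4.2} finishes both claims. Rely on Theorem \ref{vansc} rather than on the fallback via Remark \ref{action bound}: even when $SH=0$, the filtered groups $SH^*_{(-\infty,\alpha)}$ need not vanish for any finite $\alpha$ (for the ball they never do, since the top generator in the action window survives), so that remark does not deliver the vanishing you quote.
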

\begin{proof}
    Let $k$ be an arbitrary field. By universal coefficients, we know that $SH_*(W,\mathbb{Z})=0\implies SH_*(W,k)=0.$ The algebraic duality statement \ref{adual} implies $SH^*(W,k)=0.$ This further implies that $SH^*(W,k[\pi_1(W)])=0$ by \ref{vansc}. Now because of dynamical convexity  there are no generators of $SH_+^*(W,k[\pi_1(W)])$ of index less than $n+1$ implying $SH_+^i(W,k[\pi_1(W)])=0$ for $i<n+1$. The long exact sequence \ref{lesloc2} implies that $H_i(\tilde{W},k)=0$ for $i>0$ where $\tilde{W}$ is the universal cover of $W$. 

    Note that $H_i(\tilde{W})$ need not be finitely generated say if $\pi_1(W)$ is infinite. By splitting of the  universal coefficients  short exact sequence we have $H_i(\tilde{W},\mathbb{Z})\otimes k\bigoplus Tor(H_{i-1}(\tilde{W}),k)\cong H_i(\tilde{W},k)$. For $i=1$,we have $H_1(\tilde{W},\mathbb{Z})\otimes k\cong H_1(\tilde{W},k)=0$ since $Tor(\mathbb{Z},k)=0.$ From universal coefficients with $i=2$, we have $Tor(H_1(\tilde{W},\mathbb{Z}),k)=0$. Now $Tor(H_1(\tilde{W},\mathbb{Z}),k)=0$ and $H_1(\tilde{W},\mathbb{Z})\otimes k=0$ implies $H_1(\tilde{W},\mathbb{Z})=0.$ By induction we have $H_i(\tilde{W})=0$ for $i>0$.

For the universal cover $\tilde{W}$, by Hurewicz $\pi_2(W)\cong H_2(\tilde{W})=0$ and by induction $\pi_i(W)\cong0$ for $i>1$. Theorem \ref{4.2} implies that $\pi_i(W)\cong \pi_i(M)$ for $1<i<2n-1$. Therefore we have $\pi_i(M)=0$ for $1<i<2n-1$ since $\pi_i(\tilde{W})\cong \pi_i(W)$ for $i>1$.
    \end{proof}

\begin{rem}
    The above proofs can be adapted to working with index bounds that is not necessarily $n+1$ of dynamical convexity. An interesting source of lower bounds on Conley Zehnder index comes from the minimal discrepancy number of the link of a singularity cf \cite{mclean2016reeb}. One can impose restrictions on fillings of its link from the minimal discrepancy number of a singularity.
\end{rem}

\subsection{Conley-Zehnder indices and topology} We proved in the previous subsection that dynamical convexity and vanishing of symplectic homology of a filling force strong topological restrictions on the contact manifold. We would like to show here  that the general idea of working with both symplectic cohomology and homology of any strong filling can still be used to reveal topological information of the contact manifold from Reeb dynamics by weakening the hypothesis of dynamical convexity. 

Throughout this subsection, we assume that the contact manifolds admits a strong aspherical fillings. This induces a $\mathbb{Z}$ grading on symplectic (co)homology. 

\begin{thm}[Theorem \ref{czi}]\label{cz}
    Let $M^{2n-1}$ be a contact manifold such that it admits an exact filling $W$ with $SH_*=0$. If there exists a non-degenerate contact form $\alpha$ on $M$ with no Reeb orbits of Conley Zehnder index $2+k$ and $1-k$, then $H_{k+n}(\partial W)=0$ and $H_{n-k-1}(\partial W,\mathbb{Z})=0$.
\end{thm}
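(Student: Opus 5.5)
The plan is to run the same two-sided argument as in Theorem \ref{hom}. The positive symplectic homology and positive symplectic cohomology of $W$ are both generated by Reeb orbits of $\alpha$. Since $SH_*(W)=0$, Propositions \ref{les} and \ref{les2} turn the missing Conley--Zehnder indices into vanishing of single groups of $W$. Lefschetz duality and the long exact sequence of the pair $(W,\partial W)$ then transfer this to $\partial W$. The grading is $\mathbb{Z}$ because of the standing asphericity assumption. We work first over an arbitrary field $k$ and pass to $\mathbb{Z}$ at the end.

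\emph{Step 1 (vanishing of $SH$ and $SH^*$).} Over a field $k$, universal coefficients gives $SH_*(W,k)=0$, and Theorem \ref{adual} gives $SH^*(W,k)=0$.

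\emph{Step 2 (positive parts in one degree).} We compute positive symplectic (co)homology with the nondegenerate form $\alpha$; this is allowed since $SH^+$ does not depend on the choice of contact form. The chain groups $SC^+$ and $SC_+$ are generated by Reeb orbits, so the absence of orbits of index $2+k$ gives $SH^+_{2+k}(W,k)=0$. Dually, the absence of orbits of index $1-k$ kills the corresponding cohomological degree; with the sign convention of Proposition \ref{les2} this is $SH_+^{-(1-k)}$ or a neighbouring degree, and the bookkeeping has to be fixed here.

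\emph{Step 3 (transfer to $W$).} Proposition \ref{les} gives the segment
\[SH_{*+1}(W)=0\to SH^+_{*+1}(W)\to H_{*+n}(W,\partial W)\to SH_*(W)=0 .\]
Hence $H_{*+n}(W,\partial W)\cong SH^+_{*+1}(W)$, and with $*+1=2+k$ this says $H_{k+n+1}(W,\partial W;k)=0$. In the same way, Proposition \ref{les2} gives $H_{*+n+1}(W)\cong SH_+^{-*}(W)$, so the missing index $1-k$ yields the vanishing of one group $H_j(W;k)$. Here $j$ should be $n-k-1$ or $n-k$, depending on the grading conventions.

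\emph{Step 4 (to the boundary).} By Lefschetz duality, $H_{k+n+1}(W,\partial W)\cong H^{n-k-1}(W)$. The plan is to feed the two vanishing groups into
\[H_{i+1}(W,\partial W)\to H_i(\partial W)\to H_i(W)\to H_i(W,\partial W)\]
and deduce $H_{k+n}(\partial W;k)=0$ and $H_{n-k-1}(\partial W;k)=0$.

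\textbf{Main obstacle.} The two vanishing groups $H_{k+n+1}(W,\partial W)$ and $H_j(W)$ do not sit adjacent in this sequence around $H_{k+n}(\partial W)$. Exactness kills $H_{k+n}(\partial W)$ only if we also know $H_{k+n}(W)=0$, or $H_{k+n+1}(W,\partial W)=0$ together with the next map being injective. To close the gap, I would use Lefschetz duality to convert $H_j(W)$ into $H^{2n-j}(W,\partial W)$. I would also use Poincaré duality on $\partial W$, which identifies $H_{k+n}(\partial W)$ with $H^{n-k-1}(\partial W)$, so each boundary group is flanked by exactly one known vanishing term of $W$ and one of $(W,\partial W)$. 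The precise degree shifts $2+k$ versus $1-k$ in the hypothesis are presumably chosen so that this adjacency works, and checking them is the delicate point.

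\emph{Step 5 (integral coefficients).} Finally, the field-coefficient vanishing for every field $k$, together with finite generation of $H_*(\partial W;\mathbb{Z})$, gives the integral statement. This requires vanishing in two consecutive degrees to control torsion. Where only one degree is available, Poincaré duality with the universal coefficient theorem on the closed manifold $\partial W$ should supply it, as in the proof of Theorem \ref{hom}.
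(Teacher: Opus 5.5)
There is a genuine gap, and it sits exactly at the step you flag as the ``main obstacle''. That obstacle is produced by an arithmetic slip in Step 3.

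Write $F$ for the coefficient field. From Proposition~\ref{les2} and $SH^*(W;F)=0$ you correctly get $H_{*+n+1}(W;F)\cong SH_+^{-*}(W;F)$. Substituting $-*=1-k$, i.e.\ $*=k-1$, the missing index $1-k$ gives $H_{n+k}(W;F)=0$, not $H_{n-k-1}$ or $H_{n-k}$. That is precisely the term adjacent to $H_{n+k+1}(W,\partial W)$ in
\[ H_{n+k+1}(W,\partial W)\to H_{n+k}(\partial W)\to H_{n+k}(W), \]
so $H_{n+k}(\partial W)=0$ follows at once and no adjacency problem exists.

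With your values of $j$, the duality manoeuvre you sketch could not close the argument:
\begin{itemize}
\item If $j=n-k-1$: over a field, Lefschetz duality makes $H_{n-k-1}(W;F)=0$ equivalent to $H_{n+k+1}(W,\partial W;F)=0$. You would have one piece of information, not two, and nothing kills $H_{n+k}(W)$.
\item If $j=n-k$: you only get $H_{n+k}(W,\partial W;F)=0$, which yields $H_{n+k}(\partial W)\cong H_{n+k}(W)$ rather than vanishing.
\end{itemize}
So the step carrying the whole proof is left as ``presumably the shifts work'', with an incorrect computation behind it.

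For the second group, the paper uses each missing index in both sequences. Proposition~\ref{les} turns $SH^+_{1-k}=0$ into $H_{n-k}(W,\partial W)=0$. Proposition~\ref{les2} turns $SH_+^{2+k}=0$ into $H_{n-k-1}(W;F)=0$. These two groups flank $H_{n-k-1}(\partial W)$ in the sequence of the pair.

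Your Poincaré duality idea is a valid alternative, but only if run over fields. Once $H_{n+k}(\partial W;F)=0$ for every field $F$, orientability of the contact manifold $\partial W$ gives
\[ H_{n-k-1}(\partial W;F)\cong H^{n+k}(\partial W;F)\cong \mathrm{Hom}_F(H_{n+k}(\partial W;F),F)=0. \]
Integrally, $H^{n-k-1}(\partial W;\mathbb{Z})=0$ alone would not detect torsion in $H_{n-k-1}(\partial W;\mathbb{Z})$.

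Finally, your claim in Step 5 that two consecutive degrees are needed to control torsion is mistaken. Since $H_j(X;\mathbb{Z})\otimes F$ injects into $H_j(X;F)$, vanishing of $H_j(X;F)$ for all fields $F$ together with finite generation already forces $H_j(X;\mathbb{Z})=0$. This single-degree argument is all the paper uses to pass to integral coefficients.
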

\begin{proof}
    Our assumption that there exists no Reeb orbit of index $2+k$ and $1-k$ implies that the positive symplectic homology groups $SH^+_{2+k}(W)$ and $SH^+_{1-k}(W)$ vanish since these groups are generated by Reeb orbits of index $2+k$ and $1-k$ respectively. Using the long exact sequence \ref{les} and the fact that $SH_*(W)=0$ we get that $H_{n+k+1}(W,\partial W)=0$ and $H_{n-k}(W,\partial W)=0.$
    
    Working with arbitrary field coefficients $k$, we can similarly get information about $H_*(W,k)$. Absence of Reeb orbits of index $2+k$ and $1-k$ implies that the  positive symplectic cohomology group $SH_+^{2+k}(W,k)$ and $SH_{+}^{1-k}(W,k)$ vanishes. Furthermore vanishing of symplectic homology $SH_*(W,k)$ forces $SH^*(W,k)=0$ by \ref{adual}. Therefore from the exactness of the long exact sequence in symplectic cohomology \ref{les2}, we deduce $H_{n+k}(W,k)$ and $H_{n-k-1}(W,k)=0$ for all field coefficients $k$. Since $W$ is compact , the homology groups $H_*(W,\mathbb{Z})$ are finitely generated. We can conclude using universal coefficients theorem, that $H_{n+k}(W,\mathbb{Z})$ and $H_{n-k-1}(W,\mathbb{Z})=0.$ 

The long exact sequence of relative homology groups of the pair $(W,\partial W)$ is given by \[\begin{tikzcd}
	{...} & {H_{k+n+1}(W,\partial W)} & {H_{k+n}(\partial W)} & {H_{k+n}(W)} & {....}
	\arrow[from=1-1, to=1-2]
	\arrow[from=1-2, to=1-3]
	\arrow[from=1-3, to=1-4]
	\arrow[from=1-4, to=1-5]
\end{tikzcd}\]. The exactness of the above sequence implies $H_{k+n}(\partial W)=0$ and $H_{n-k-1}(\partial W,\mathbb{Z})=0.$
\end{proof}
\begin{rem}
    Note that  one can further weaken the hypothesis of vanishing symplectic homology to vanishing of symplectic homology in particular degrees for instance $SH_{1+k}(W),SH_{-k}(W)$ to force $H_{n+k}(\partial W)=0$  to hold.
\end{rem}
\begin{rem}
    The theorem \ref{cz} implies that if $H_{k+n}(\partial W)\neq 0$ or $H_{n-k-1}(\partial W,\mathbb{Z})\neq 0$ then there necessarily exists a Reeb orbit of Conley Zehnder index $2+k$ or $1-k$. 
\end{rem}

\subsection{Lagrangian fillings of the standard Legendrian sphere}\label{4.3}
In this section, using the vanishing result theorem \ref{Vanishing WH}, we prove the main application of the paper. Note that by a lacunary contact form, we mean a form where the degree of  the Reeb chord generators are either all odd or all even.
\begin{thm}\label{funda}
    Let $L$ be an exact Lagrangian filling with a Maslov zero class of a Legendrian $\Lambda$ in a Liouville domain $W$ with vanishing wrapped Floer homology $HW(L)$. If $\Lambda$ admits a lacunary contact form, then $\pi_1(L)$ is trivial.
\end{thm}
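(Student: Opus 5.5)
The plan is to pass to the universal cover $\tilde{L}$ of $L$ and show that its cohomology vanishes in positive degrees, then conclude from $H^1$ that $\pi_1(L)$ is trivial. Since $HW(L)=0$, Corollary \ref{Vanishing WH} gives $HW^{(0)}(\tilde{L})=0$ for the universal cover $\tilde{L}$. The long exact sequence of Proposition \ref{les wcs} then yields isomorphisms
\[
HW^{(0)+}_k(\tilde{L})\cong H^{n-k-1}(\tilde{L})
\]
for all $k$. The Maslov zero hypothesis gives an honest $\mathbb{Z}$ grading on both sides.

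Next I use lacunarity. The complex $CW^{(0)+}(\tilde{L})$ is generated, in each degree, by lifts of Reeb chords of $\Lambda$ for the chosen contact form, placed in the same degree as the chord downstairs. If the form is lacunary, every generator has degree of a fixed parity. The lifted differential has degree $-1$, so it vanishes identically, and $HW^{(0)+}_k(\tilde{L})=CW^{(0)+}_k(\tilde{L})$ is a product of copies of $\mathbb{Z}_2$ indexed by lifts of chords of degree $k$.

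I need to be careful that the wrapped complex is built with an admissible Hamiltonian whose chords in the collar correspond to Reeb chords of the given lacunary form. For this I would use linear Hamiltonians adapted to that form and take the direct limit. Continuation maps between lacunary complexes preserve the parity of degrees, so the parity restriction survives the limit and the positive part remains concentrated in one parity.

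Combining the two steps, $H^{j}(\tilde{L})$ vanishes for all $j$ of one parity. The key degree is $j=1$, which corresponds to $k=n-2$. Here is the main obstacle: if the chord parity is such that degree $n-2$ chords can occur, this only kills $H^{j}$ for $j$ of the other parity, so $H^0$ is killed instead. That is impossible, since $H^0(\tilde{L})\neq 0$.

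So the first thing I would establish is that lacunarity, together with $HW(L)=0$, forces the parity to be the one with no generators in degree $n-1$ (equivalently $k=n-1$ cannot carry $H^0$). Indeed, the unit $1\in H^0(\tilde{L})$ must be killed by a chord in degree $n-1$. Hence all chords have the parity of $n-1$, and none lie in degree $n-2$. Therefore $H^1(\tilde{L})\cong HW^{(0)+}_{n-2}(\tilde{L})=0$, and indeed $H^{odd}(\tilde{L})=0$.

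Finally, $H^1(\tilde{L};\mathbb{Z}_2)=\mathrm{Hom}(\pi_1(\tilde{L}),\mathbb{Z}_2)$ vanishes trivially for the universal cover, so $H^1$ alone gives nothing. I would therefore instead apply the argument to intermediate covers, or use compactly supported/relative versions. The step I would actually try is to work with the two-sheeted and more general finite covers. Any nontrivial finite or normal cover $\tilde{L}'$ with deck group $G$ has $H^0(\tilde{L}')$ or $H^0_c$ counting components at infinity. Comparing ranks of $HW^{(0)+}$ in degree $n-1$ (the number of lifts of each chord is $|G|$ times the number of chords) with $H^0$ (rank $1$) then forces $|G|=1$.

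For infinite $\pi_1$, I would run the same count with the product over the fiber: the chords of degree $n-1$ give $\prod_{G}\mathbb{Z}_2$ per chord, which must equal $H^0(\tilde{L})=\mathbb{Z}_2$. This is impossible unless $G$ is trivial. I expect making this rank count rigorous, specifically identifying which cohomology (ordinary versus compactly supported) appears in Proposition \ref{les wcs} for noncompact $\tilde{L}$, to be the main technical obstacle.
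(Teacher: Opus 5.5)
Your closing argument is the paper's proof. On the universal cover, lacunarity kills the differential, so $HW^{(0)+}_{n-1}(\tilde L)$ is a direct sum, over the degree-$(n-1)$ chords, of products indexed by the fibre $\cong\pi_1(L)$; there is at least one such chord because this group is isomorphic to $H^0(\tilde L)\neq 0$, and comparing the two forces a single chord and $\pi_1(L)=1$. The $H^1$ plan, the parity bookkeeping in other degrees and the detour through finite covers can be dropped, and your worry about compactly supported cohomology is settled by the fibrewise products in $CW^{(0)}(\tilde L)$, which is why Proposition \ref{les wcs} involves ordinary cohomology with $H^0(\tilde L)\cong\mathbb{Z}$.
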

\begin{proof}
 The vanishing result of corollary \ref{Vanishing WH} implies that $HW^{(0)}(\tilde{L})=0$ for the universal cover $\tilde{L}$ of $L$. From the long exact sequence \ref{les wcs}, we can conclude that we have an isomorphism ${HW_{n-1}^0}^+(\tilde{L})\cong H^0(\tilde{L}) $. Since we assume that the form is lacunary, the differentials in ${HW_{k}^{(0)}}$ vanish and ${HW_{n-1}^{(0)}}$ is generated by Reeb chords of which there exists at least one since its non-zero. However $H^0(\tilde{L})\cong \mathbb{Z}$ whereas ${HW_{n-1}^{(0)}}^+(\tilde{L})={CW^{(0)}_{n+1}}^+(\tilde{L})= \bigoplus_{\mu_{CZ}(x)=n+1}\underset{p(\tilde{x}(0))=x(0)}{\Pi}\mathbb{Z}\tilde{x}(0)\cong \bigoplus_{\mu_{CZ}(x)=n+1}\mathbb{Z}[[\pi_1(L)]]$. Therefore the isomorphism ${HW_{n-1}^{(0)}}^+(\tilde{L})\cong H^0(\tilde{L}) $ implies $\pi_1(L)=1$.
\end{proof}
\begin{rem}
    In case the filling is not Maslov zero and there exists only one single transverse Reeb chord as in \cite{chantrainefloer}, the above proof works over $\mathbb{Z}_2$ grading. However this is not be the case for the standard Legendrian $\Lambda_{std}$ in $(S^{2n-1},\xi_{std})$, as there are infinitely many Reeb chords for a non-degenerate contact form and the form need not be lacunary. (\cite{kim2024topology},Example 2.6).
\end{rem}

Before proving theorem \ref{lagrangian filling}, we state an invariance result for wrapped Floer homology of universal covers. Let $\alpha$ be contact form on $M$ and $\Lambda$, a Legendrian submanifold in $M$. We denote by $\mathcal{C}^{>D}(\Lambda,\alpha)$ to be the set of all  Reeb chords of $\Lambda$ in $Y$ that are contractible relative to $\Lambda$ and have action bounded below by $D$.

\begin{defn}[Lazarev \cite{lazarev2020contact}] Let $\Lambda$ be a Legendrian in a contact manifold $(Y,\xi)$ such that $c_1(Y,\mathbb{Q})=0$, $\pi_1(Y,\Lambda)=0$ and the Maslov class $\mu_{\Lambda}(Y,\Lambda)$ vanishes. $\Lambda$ is asymptotically dynamically convex if there exists an infinite sequence of contact forms $\alpha_1\geq\alpha_2...$ for $Y$ and an infinite sequence of positive numbers $D_1<D_2<....
$ tending to infinity, such that for any $\gamma\in\mathcal{C}^{>D_i}(\Lambda,\alpha_i)$, $\gamma$ is non-degenerate and index $\mu_{CZ}(\gamma)>1$.
    
\end{defn}

We have an invariance result for positive wrapped Floer homology which is proved in a similar way in the case of positive symplectic homology.

\begin{prop}[\cite{lazarev2020contact},proposition 7.10]\label{adcleg}
    Let $\Lambda$ be an asymptotically dynamically convex Legendrian in an asymptotically dynamically contact manifold $Y$. For two exact Maslov-zero Lagrangians $L_1$ and $L_2$ in Liouville fillings $W_1$ and $W_2$ of $M$, then $HW^+(L_1)\cong HW^+(L_2)$ 
\end{prop}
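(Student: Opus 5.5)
The plan is to follow Lazarev's proof of invariance of positive symplectic homology for ADC contact manifolds, adapted to the open-string setting. The key input is a neck-stretching argument: no holomorphic strip contributing to the positive wrapped complex, with action below the window $D_i$, can enter the filling.

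\textbf{Step 1: filtered positive complexes.} For each $i$, use the contact form $\alpha_i$ and the action bound $D_i$. Realise $W_1$ and $W_2$ (after Liouville rescaling) so that $(M,\alpha_i)$ appears as a contact hypersurface in the collar of each completion. Choose Hamiltonians that are $C^2$-small inside this hypersurface and linear of slope slightly less than $D_i$ outside. The positive part of $CW(L_j,H)$ with action $<D_i$ is then generated by Reeb chords of $\Lambda$ of action $<D_i$, and these are the same set for $j=1,2$.

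\textbf{Step 2: the strips and continuation maps stay in the cylindrical end.} I expect this to be the main obstacle. One has to show that for an almost complex structure sufficiently stretched along the hypersurface, the moduli spaces defining the positive differentials and the continuation maps between the $i$ and $i+1$ data contain no curve entering the filling. If a sequence of curves did enter, SFT compactness would produce a building. Its bottom level lies in $\hat W_j$, has boundary on $\hat L_j$, and has negative asymptotics at Reeb chords of $\Lambda$ (possibly also orbits of $M$) of action $<D_i$.

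The index formula for such a building involves $\mu_{CZ}$ of the asymptotic chords and orbits. The assumptions $\mu_{CZ}(\gamma)>1$ for chords and $\mu_{CZ}>3-n$ for orbits give each interior piece a positive index contribution. Exactness and the Maslov-zero condition make the grading well defined, and the hypothesis $\pi_1(Y,\Lambda)=0$ makes the indices independent of capping choices. As in Lazarev's Proposition 7.10 (and his closed-case Proposition 3.8), these contributions force the top level to have negative expected dimension, which contradicts rigidity or one-dimensionality. Hence the positive filtered complexes for $L_1$ and $L_2$ are defined purely by curves in the symplectization. They are identical as chain complexes, and so are the continuation maps.

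\textbf{Step 3: pass to the limit.} $HW^+(L_j)$ is the direct limit over $i$ of the filtered positive groups, using the continuation maps (compatible with the rescalings $\alpha_1\geq\alpha_2\geq\dots$ via Viterbo transfer). The isomorphisms from Step 2 commute with these maps, so they induce $HW^+(L_1)\cong HW^+(L_2)$. A cofinality check shows that slopes tending to infinity are captured because $D_i\to\infty$.
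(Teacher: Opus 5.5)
Your proposal is correct and follows the argument the paper relies on: the paper defers to Lazarev's Proposition 7.10 and, in the proof of Proposition \ref{adccover}, summarizes it as a neck-stretching argument. That argument shows the moduli spaces defining the filtered positive wrapped differentials (and implicitly the continuation maps) never enter the filling, so the complexes for $L_1$ and $L_2$, and hence their direct limits, coincide. Two small points of precision: the capping pieces in $\hat W_j$ have \emph{positive} punctures at the neck chords and orbits (the top level is what acquires the extra negative ends), and applying the ADC bound to closed orbits at interior punctures tacitly uses $c_1(W_j)=0$ and that these orbits are contractible in $Y$, which holds automatically in the paper's application $Y=S^{2n-1}$.
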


\begin{prop}\label{adccover}
    Let $\Lambda$ be an asymptotically dynamically convex Legendrian in an asymptotically dynamically convex contact manifold $Y$. For two exact Maslov-zero Lagrangians $L_1$ and $L_2$ in Liouville fillings $W_1$ and $W_2$ of $M$, let  $\tilde{L_1}$ and $\tilde{L_2}$ be covering spaces over $L_1$ and $L_2$ such that $\partial\tilde{L_1}\cong\partial\tilde{L_2}$ as coverings over $\Lambda$. Then ${HW^{(0)}}^+(\tilde{L_0})\cong {HW^{(0)}}^+(\tilde{L_1})$.
    \end{prop}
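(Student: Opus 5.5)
I will follow Lazarev's proof of Proposition \ref{adcleg} (itself modelled on the invariance of $SH^+$ for ADC manifolds) and check that each step survives the lift to $\tilde{L}$. The idea is to show that, in a fixed action window, ${HW^{(0)}}^+(\tilde{L_i})$ is computed entirely in the cylindrical end $[1,\infty)\times Y$ of $\hat W_i$, from data on $\Lambda$ and its covering $\partial\tilde{L_i}$. Since $\partial\tilde{L_1}\cong\partial\tilde{L_2}$ as coverings of $\Lambda$, the two chain complexes will then be identified.

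\textbf{Step 1: filtered model complexes.} For each contact form $\alpha_j$ in the ADC sequence, realize $\alpha_j$ as the graph $\{r=f_j\}$ in the symplectization. Choose Hamiltonians $H_j$ that are $C^2$-small below this graph and linear of slope $D_j$ (slightly perturbed) above it. The generators of the positive lifted complex ${CW^{(0)}}^+(\tilde{L_i},H_j)$ in action window $(\epsilon,D_j)$ are lifts $\tilde{x}(0)\in\partial\tilde{L_i}$ of Reeb chords $x\in\mathcal{C}(\Lambda,\alpha_j)$. All such chords are contractible relative to $\Lambda$, because $\pi_1(Y,\Lambda)=0$. Their gradings are given by $\mu_{CZ}$ using the Maslov-zero assumption, and the lifts are indexed by the fibre of $\partial\tilde{L_i}\to\Lambda$ over $x(0)$. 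As groups, these complexes are therefore canonically the same for $i=1,2$.

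\textbf{Step 2: strips stay in the end.} This is the key step, taken from \cite{lazarev2020contact}. Index positivity $\mu_{CZ}(\gamma)>1$ is combined with neck-stretching along $\{r=f_j\}$, or equivalently with the integrated maximum principle and an index argument. The conclusion is that for generic admissible $J$, every rigid strip between positive generators of action below $D_j$, and every continuation strip between $H_j$ and $H_{j+1}$, lies in $[1,\infty)\times Y$. A strip that enters the filling would, after stretching, produce a building with a component in $W_i$ asymptotic to chords of index $>1$, and the index count would then exceed that of a rigid curve. The argument concerns only images and indices, so it applies verbatim to the lifted count. Each lifted differential coefficient $|\tilde{\mathcal{M}}(\tilde{y},\tilde{x})|$ counts strips of $\mathcal{M}(y,x)$ whose boundary path $\alpha_0(u)$ lifts from $\tilde x$ to $\tilde y$. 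Because $u$ lies in the end, $\alpha_0(u)$ lies in $[1,\infty)\times\Lambda$, and its lift depends only on the covering $\partial\tilde{L_i}\times[1,\infty)\to\Lambda\times[1,\infty)$.

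\textbf{Step 3: pass to the limit.} Step 2 gives chain isomorphisms
\[{CW^{(0)}}^{+,(\epsilon,D_j)}(\tilde{L_1},H_j)\cong {CW^{(0)}}^{+,(\epsilon,D_j)}(\tilde{L_2},H_j).\]
These commute with the lifted continuation maps, which are defined by lifting boundary paths as in Lemma \ref{damian lemma}, and with the action-window inclusions. Taking the direct limit over $j$, which is legitimate since $D_j\to\infty$, gives ${HW^{(0)}}^+(\tilde{L_1})\cong{HW^{(0)}}^+(\tilde{L_2})$.

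One point must be checked: each generator involves a possibly infinite product over the fibre. The lifted differential is still well defined, because for fixed $\tilde{x}$ only finitely many $\tilde{y}$ occur, by the unique lifting property used in the definition of $\tilde d$. The isomorphism of Step 1 respects these products, since it is defined fibrewise.

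\textbf{Main obstacle.} The hard part is Step 2 for the continuation maps between different forms $\alpha_j>\alpha_{j+1}$. Lazarev handles it using the ordering $\alpha_1\geq\alpha_2\geq\cdots$ and action control, and one must confirm that the filtration by action is compatible with the lift. It is, because action is computed downstairs, but the compatibility of the lifted continuation maps with the identifications has to be verified by the same homotopy-class partition as in the proof of the Leibniz rule above.
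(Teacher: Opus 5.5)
Your proposal is correct and follows essentially the same route as the paper: neck-stretching, as in Lazarev's proof of Proposition \ref{adcleg}, keeps the relevant rigid strips in the cylindrical end. Each boundary path $\alpha_0(u)$ then lies in $[1,\infty)\times\Lambda$, and its lift depends only on the covering $\partial\tilde{L_i}\to\Lambda$, so the lifted differentials coincide.

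You also spell out the action-window, continuation-map and direct-limit bookkeeping, and the finiteness needed for the fibrewise products, all of which the paper's proof leaves implicit.
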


\begin{proof} The proof of proposition \ref{adcleg} follows by proving that the assumptions imply  the differential in $HW^+(L_i)$ is defined using  moduli spaces that  do not enter the Liouville filling by a neck stretching argument. In this setting, let $u\in\mathcal{M}_0(x,y)$ be a rigid trajectory between Reeb chords which do not enter the filling. Then $\alpha_0(u)$ defines the same path in $[1,\infty]\times \Lambda$ whether it is seen in the completion of  $L_1$ or in the completion of $L_2$. Therefore after fixing a lift $\tilde{x}_0$ of $x(0)$, $\alpha_0(u)$ lifts to the same path in the cylindrical ends of $\tilde{L_1}$ and $\tilde{L_2}$ by uniqueness of lifting. Therefore the differentials coincide in ${HW^{(0)}}^+(\tilde{L_0})$ and ${HW^{(0)}}^+(\tilde{L_1})$ implying ${HW^{(0)}}^+(\tilde{L_0})\cong {HW^{(0)}}^+(\tilde{L_1})$.
    \end{proof}

In the following theorem, we follow an idea of Zhou (\cite{zhou2021symplectic},Theorem 3.17) where he managed to make the boundaries of the  covering spaces match in order to use invariance of positive symplectic homology and its structure map by matching .
\begin{thm} \label{funda1}
        Let $W^{2n}$ be a Liouville filling of $(S^{2n-1},\xi_{std})$ with $n>2$  and let $L$ be an exact Lagrangian filling of the standard Legendrian $\Lambda_{std}$ with vanishing Maslov class. Then $\pi_1(L)=1$.
\end{thm}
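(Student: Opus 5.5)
We combine the vanishing of lifted wrapped Floer homology (Corollary \ref{Vanishing WH}) with the invariance of lifted positive wrapped Floer homology (Proposition \ref{adccover}), comparing $L$ with the standard filling $B^n \subset B^{2n}$. The idea, following Zhou, is to construct a second Lagrangian filling whose universal-cover boundary matches that of $\tilde L$ and whose lifted homology can be computed explicitly.

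\textbf{Step 1 (vanishing for $L$).} $HW(L)$ is a module over $SH_*(W)$, and every Liouville filling of $(S^{2n-1},\xi_{std})$ has $SH_*(W)=0$: by Eliashberg--Floer--McDuff it is a ball, and $SH$ vanishes by the ADC invariance of $SH^+$ together with the sequence of Proposition \ref{les}. Hence $HW(L)=0$. Let $\tilde L\to L$ be the universal cover; Corollary \ref{Vanishing WH} gives $HW^{(0)}(\tilde L)=0$.

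\textbf{Step 2 (an isomorphism on $HW^{(0)+}(\tilde L)$).} Proposition \ref{les wcs} then yields ${HW^{(0)}}^+_k(\tilde L)\cong H^{n-k-1}(\tilde L)$. Write $G=\pi_1(L)$. Since $\Lambda_{std}\cong S^{n-1}$ is simply connected for $n>2$, the boundary $\partial\tilde L$ is the trivial cover $G\times \Lambda_{std}\to\Lambda_{std}$.

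\textbf{Step 3 (comparison filling).} In $B^{2n}$, take the disjoint trivial cover $\tilde L_0 := G\times B^n \to B^n$ of the standard filling. It is a (disconnected) covering with $\partial \tilde L_0\cong \partial\tilde L$ as coverings over $\Lambda$. The main point to check is that $\Lambda_{std}\subset S^{2n-1}$ is an ADC Legendrian, which holds by index positivity for $n>2$ (\cite{kim2024topology}). Granting this, Proposition \ref{adccover} gives
\[{HW^{(0)}}^+(\tilde L)\cong {HW^{(0)}}^+(\tilde L_0)\cong \prod_{G}HW^+(B^n)\cong \prod_G H^{n-*-1}(B^n).\]
The last isomorphism uses $HW(B^n)=0$ and the chain-level product decomposition; the product appears because the chain groups are products over the fiber.

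\textbf{Step 4 (degree zero).} Combining Steps 2 and 3 in the degree $k=n-1$ gives
\[H^0(\tilde L)\cong \prod_G \mathbb{Z}.\]
Since $\tilde L$ is connected, $H^0(\tilde L)\cong\mathbb{Z}$, forcing $|G|=1$, that is, $\pi_1(L)=1$.

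\textbf{Main obstacle.} The delicate point is making the isomorphism of Proposition \ref{adccover} compatible with the identification of generators in degree $n-1$. One must check that the neck-stretching argument keeps the relevant rigid strips in the cylindrical end, so the lifted differentials agree sheet by sheet, and that the action/ADC hypotheses apply for $\Lambda_{std}$ in an arbitrary filling $W$. If a direct product over $G$ is troublesome, we would instead compare ranks over $\mathbb{Z}_2$ in a fixed action window.
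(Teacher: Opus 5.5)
Your argument follows the paper's proof essentially step for step: $HW^{(0)}(\tilde L)=0$, the boundary of $\tilde L$ is $\pi_1(L)$ copies of $\Lambda_{std}$, you compare via Proposition \ref{adccover} with the trivial cover $\pi_1(L)\times L_1$ of a filling in the standard ball, and you read off $H^0(\tilde L)\cong\prod_{\pi_1(L)}\mathbb{Z}$ in degree $n-1$. The only difference is cosmetic: you take $L_1$ to be the standard disk $B^n$, which makes the paper's appeal to Chantraine et al.\ for $\pi_1(L_1)=1$ unnecessary.

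One caveat concerns Step 1, which the paper uses only implicitly. Filling-independence of the groups $SH^+_*$ together with the exact sequence does not by itself force $SH_*(W)=0$, because the connecting map $SH^+_{n+1}(W)\to H_{2n}(W,\partial W)$ could a priori fail to be an isomorphism. You should instead invoke filling-independence of that connecting map (\cite{zhou2021symplectic}, Theorem A) or cite a standard reference for the vanishing of $SH$ for fillings of $(S^{2n-1},\xi_{std})$.
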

\begin{proof}
    The Lagrangian $L$ is proven to be spin in \cite{kim2024topology},section 2.3.1. Therefore we can work with wrapped Floer theory with $\mathbb{Z}$ coefficients. By \cite{chantrainefloer} we know that for an exact Maslov zero Lagrangian filling $L_1$ of $\Lambda_{std}$ in the standard ball $D^{2n}$,  $\pi_1(L_1)=1$. Therefore the positive wrapped Floer homology group of the universal cover $${HW^{(0)}}^+(\tilde{L_1})\cong HW^+(L_1).$$  Since $HW(D^{n}_{std})=0$, the long exact sequence \ref{les wcs} in wrapped Floer homology implies $${HW_{n-1}^{(0)}}^+(\tilde{L_1})\cong HW_{n-1}^+(L_1)\cong H^0(L_1)\cong\mathbb{Z}$$  
    
    Now considering the Liouville filling $W$, note that the map $\pi_1(\Lambda)\rightarrow\pi_1(L)$ induced by inclusion is injective since $\pi_1(\Lambda_{std})=0$. Therefore the the boundary $\partial\tilde{L}$ of the universal cover $\tilde{L}$ is given by $\pi_1(L)/\pi_1(\Lambda)=\pi_1(L)$ copies of $\Lambda_{std}$. Therefore  $\mathcal{L}:=L_1\times \pi_1(L)$ is a cover over $L_1\subset D^{2n}_{std}$ such that $\partial \mathcal{L}=\Lambda_{std}\times\pi_1(L)$, coinciding with $\partial \tilde{L}$.  
    
    The Legendrian $\Lambda_{std}$ and $(S^{2n-1},\xi_{std})$ are asymptotically dynamically convex and they satisfy the assumptions of proposition \ref{adccover}. Hence by proposition \ref{adccover} $$ {HW_{n-1}^{(0)}}^+(\tilde{L})\cong {HW_{n-1}^{(0)}}^+(\mathcal{L})=\Pi_{\pi_1(L)}HW_{n-1}(L_1)\cong\Pi_{\pi_1(L)}\mathbb{Z} $$
    The long exact sequence of the wrapped Floer homology of the universal cover $\tilde{L}$ (proposition \ref{les wcs}) implies $ H^0(\tilde{L})\cong {HW_{n-1}^{(0)}}^+(\tilde{L}) $. Since $H^0(\tilde{L})=\mathbb{Z}$, we have $\mathbb{Z}\cong\Pi_{\pi_1(L)}\mathbb{Z}.$ This implies $\pi_1(L)=1$.
\end{proof}

\begin{cor}[Theorem \ref{lagrangian filling}]\label{lagrangian filling proof}
    Let $W^{2n}$ be a Liouville filling of $(S^{2n-1},\xi_{std})$  and let $L$ be an exact Lagrangian filling of the standard Legendrian $\Lambda_{std}$ with vanishing Maslov class. Then $L$ is diffeomorphic to the ball $B^n$ for $n\neq 1,2,4$ and homeomorphic to $B^n$ when $n=4$ .
\end{cor}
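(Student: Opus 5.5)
The plan is to combine the homological information of Kim--Kwon with the fundamental group computation of Theorem \ref{funda1}, and then apply the h-cobordism theorem in high dimensions and Freedman's work in dimension $4$ (for $n=3$ the relevant input is Perelman's resolution of the Poincar\'e conjecture).

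\textbf{Step 1: $L$ is a homology ball.} By \cite{kim2024topology}, an exact Maslov-zero filling $L$ of $\Lambda_{std}$ in any Liouville filling $W$ of $(S^{2n-1},\xi_{std})$ satisfies $H_*(L;\mathbb{Z})\cong H_*(B^n;\mathbb{Z})$. This uses index-positivity of $\Lambda_{std}$ together with the invariance of $HW^+$ (Proposition \ref{adcleg}) and the vanishing $HW(L)=0$. The vanishing holds because $W$ is diffeomorphic to a ball and, by the module structure over $SH_*(W)$, $SH_*(W)=0$ for any filling of the standard sphere. I would cite this result. Alternatively, I would rederive it by comparing with a filling $L_1\subset D^{2n}_{std}$ via the long exact sequence and $HW^+(L)\cong HW^+(L_1)$.

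\textbf{Step 2: $L$ is simply connected.} By Theorem \ref{funda1}, $\pi_1(L)=1$ for $n>2$. The boundary $\partial L=\Lambda_{std}\cong S^{n-1}$ is also simply connected for $n\ge 3$.

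\textbf{Step 3: identify $L$.} Remove a small ball from the interior of $L$ to obtain a cobordism $V$ between $S^{n-1}$ and $\partial L\cong S^{n-1}$. The space $V$ is simply connected, and excision together with Step 1 shows $H_*(V,S^{n-1})=0$. Hence $V$ is an h-cobordism, because simply connected spaces with isomorphic homology are related by a homotopy equivalence via Whitehead's theorem.

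For $n\ge 6$, the h-cobordism theorem gives $V\cong S^{n-1}\times[0,1]$, and therefore $L\cong B^n$. For $n=5$, I would cap the boundary $S^4$ with a ball to obtain a homotopy $5$-sphere. That sphere is diffeomorphic to $S^5$ because $\Theta_5=0$ (Kervaire--Milnor), and removing the ball recovers $L\cong B^5$ by the disc theorem of Palais--Cerf. For $n=4$, Freedman's topological h-cobordism theorem gives a homeomorphism $L\cong B^4$. For $n=3$, $L$ is a compact simply connected $3$-manifold with boundary $S^2$, so capping the boundary and applying Perelman gives $L\cong B^3$.

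The main obstacle I expect is in Step 1 and in the low-dimensional cases. Step 1 is where the spin and Maslov-zero hypotheses and the correct grading shifts must be checked so that the Kim--Kwon homology-ball result applies verbatim. In the low-dimensional cases, the excluded values $n=1,2$ must fall outside the argument; they do, since Theorem \ref{funda1} requires $n>2$. I would first verify Step 1 by citing \cite{kim2024topology} directly.
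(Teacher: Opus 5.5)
Your proposal is correct and follows the paper's proof. Both get $\pi_1(L)=1$ from Theorem~\ref{funda1} for $n>2$ and the homology-ball statement from Theorem A of \cite{kim2024topology}, then apply the h-cobordism theorem for $n\ge 6$, Kervaire--Milnor for $n=5$, Freedman for $n=4$ and Perelman for $n=3$; your excision of a small ball also makes explicit where the h-cobordism comes from, which the paper only asserts.

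There are two small points of precision. In Step 1, the module structure is what passes $SH_*(W)=0$ on to $HW(L)=0$; it is not the source of $SH_*(W)=0$. For $n=4$, the cobordism is only $4$-dimensional, so Freedman's $5$-dimensional topological h-cobordism theorem does not apply; the right citation is Freedman's topological Poincar\'e conjecture for $L\cup_{S^3}B^4$, followed by the Brown--Schoenflies theorem.
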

\begin{proof}
    For a Maslov zero exact Lagrangian filling, $\pi_1(L)=0$ by theorem \ref{funda1} for $n>2$. For Maslov-zero exact fillings, $H_i(L,\partial L)\cong H_i(B_n,\partial B_n)$ by theorem A in \cite{kim2024topology}. Therefore by h-cobordism theorem, $L$ is diffeomorphic to the ball $B^n$ when $n\geq 6$. . The $n=3$ case is implied by Perelman's proof of the Poincare conjecture. When $n=4$, $L$ is homeomorphic to $B^4$ by Friedman's theorem\cite{freedman1982topology}. The proof of smooth Poincare conjecture for smooth homotopy $5-$spheres (\cite{kervaire1963groups}) implies the $n=5$ case. The $n=2$ case follows after noticing that that $(S^3,\xi_{std})$ has a unique Liouville filling by the standard ball and then applying the Eliashberg-Polterovich theorem (\cite{eliashberg1996local}) on uniqueness of exact Lagrangian filling.
    \end{proof}
\begin{rem}
   Let $W$ be a topologically simple Liouville filling of a contact manifold $M$ that have vanishing symplectic homology and nice enough Reeb dynamics (for instance lacunary) on $M$. We can infer $\pi_1(W)=0$ using symplectic homology of covering spaces in a similar fashion to the proofs in the  Lagrangian setting of theorem \ref{funda}. 
\end{rem}

\subsection{$\pi_i$ of symplectic fillings} \label{4.4}

 Given the long exact sequence in symplectic homology from proposition (\ref{les}) $...\rightarrow SH_*(W)\rightarrow SH^+_*(W)\rightarrow H^{n-*+1}(W)\rightarrow..$, we can look at the map $$\delta_\partial:SH^+_*(W)\rightarrow H^{n-*+1}(W)\rightarrow H^{n-*+1}(\partial W)$$ where $H^{n-*+1}(W)\rightarrow H^{n-*+1}(\partial W)$ is the map induced by inclusion. Zhou(\cite{zhou2021symplectic} Theorem A) proves that for  $\partial W$ that are ADC, the map $\delta_\partial:SH^+_*(W)\rightarrow H^{n-*+1}(\partial W)$ is independent of topologically simple exact fillings.

If we work with symplectic cohomology $SH^*(W)$ instead of symplectic homology $SH_*(W)$, then for ADC $\partial W$ we have the dual map $\partial_\delta:H_{*+n+1}(\partial W,k)\rightarrow H_{*+n+1}(W)\rightarrow SH^{-*}_+(\partial W,k)$ is independent of topologically simple Liouville fillings. This statement is implied by Zhou's proof of \cite{zhou2021symplectic} Theorem A since  we can use the same moduli-spaces involved in the proof but define the differential by taking outgoing trajectories instead of incoming ones. Similarly we present a dual version of a proposition by Zhou whose proof is also implicit.

\begin{prop}[\cite{zhou2021symplectic},Proposition 3.16]\label{zhouprop} Let $M$ be an ADC-contact manifold and $W$ be a topologically simple exact filling of $M$ . Then for the universal covering $\tilde{W}\rightarrow W$ the map $\tilde{\partial_\delta}:H_{*+n+1}(\partial \tilde{W},k)\rightarrow SH^{-*}_+(\partial {W},k[\pi_1(W)])$ is independent of topologically simple exact filling $W'$ and covering $\tilde{W'}$ as long as $\partial\tilde{W}=\partial\tilde{W'}$.
    
\end{prop}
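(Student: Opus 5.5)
The plan is to dualize Zhou's argument for \cite{zhou2021symplectic}, Proposition 3.16, working throughout with the lifted cochain complex $SC^*(\tilde W,H,k)=\bigoplus_{p(\tilde x)=x}k\tilde x$. By the identification in Section \ref{2.2}, this complex computes $SH^*(W,k[\pi_1(W)])$. The map $\tilde{\partial_\delta}$ is the composition of the map $H_{*+n+1}(\partial\tilde W,k)\to H_{*+n+1}(\tilde W,k)$ induced by inclusion with the connecting map of Proposition \ref{lesloc2}. At chain level, the connecting map counts lifted Floer cylinders that leave a Morse generator (a low-action critical point of $H$ in $\tilde W$) and arrive at a lifted Reeb orbit. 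So I would model $\tilde{\partial_\delta}$ directly as follows. Choose a Morse function on $W$ whose gradient is outward pointing near $\partial W$, so that classes of $\partial\tilde W$ are represented by critical points near the collar. Then represent the composite map by counting lifted rigid cylinders (or cylinders with one Morse flow line attached) from these boundary critical points to lifts of Reeb orbits of the contact forms $\alpha_i$ of Definition \ref{adc defn}.

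First step: for each $i$, fix the action window $(\epsilon,D_i)$ and the Hamiltonian of slope adapted to $\alpha_i$. Since $W$ is topologically simple and the orbits are contractible, the ADC index bound $\mu_{CZ}>3-n$ holds for orbits of $\alpha_i$ below $D_i$. I then run Zhou's neck-stretching along $\partial W$. The index bound, combined with topological simplicity (which gives the grading on $W$ in the SFT dimension formula), shows that after stretching, every rigid curve contributing to the filtered positive differential or to the chain-level $\tilde{\partial_\delta}$ in this window stays in the symplectization $[1,\infty)\times M$, or in a small collar near $\partial W$. The argument is the same as the homological one, with incoming and outgoing ends exchanged: the dimension count for a curve entering the filling is unchanged by transposition, so the same curves are excluded.

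Second step: lifting. The contributing curves lie in the collar, which is identified with a collar of $\partial W'$. Their boundary asymptotics and Morse flow lines near $\partial W$ lift uniquely once the starting lift is fixed, and the lift depends only on the covering $\partial\tilde W\to M$. Because we assume $\partial\tilde W=\partial\tilde W'$ as coverings, the lifted cochain complexes $SC^*_{(\epsilon,D_i)}$ for $W$ and $W'$ coincide, and so do the lifted chain-level maps from $C_*(\partial\tilde W)$. This is the same argument as in Proposition \ref{adccover}. Hence the filtered maps $H_{*+n+1}(\partial\tilde W,k)\to SH^{-*}_{(\epsilon,D_i)}(W,k[\pi_1])$ agree for $W$ and $W'$.

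Third step: pass to the limit over $i$. Continuation maps between the windows for $\alpha_i$ and $\alpha_{i+1}$ are also confined to the collar by the same neck-stretching, so they agree as well. Taking the inverse limit defining $SH^*_+$ then identifies $\tilde{\partial_\delta}$ for $W$ and $W'$.

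I expect the main obstacle to be the neck-stretching compatibility with the inverse limit. In cohomology, limits are $\varprojlim$, so one must check that the identifications at each level are compatible. If they are, then inverse limits of isomorphic inverse systems are isomorphic, but a $\varprojlim^1$ term could spoil the long exact sequence relating $\tilde{\partial_\delta}$ to the limit. I would first try to avoid this by working with field coefficients in each fixed degree, where the relevant groups in each window are finite-dimensional whenever $\pi_1$ is finite, and otherwise appeal to Remark \ref{action bound} and state independence at the level of the filtered maps.
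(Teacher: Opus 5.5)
Your proposal is correct and takes essentially the paper's route. The paper gives no separate argument and only observes that Zhou's proof of \cite{zhou2021symplectic}, Proposition 3.16 goes through with the same moduli spaces counted as outgoing rather than incoming trajectories; your neck-stretching, lifting and limit steps spell out that dualization, and your $\varprojlim^1$ worry is not needed here, since compatible level-wise identifications already identify the induced maps into the inverse limit ($\varprojlim^1$ only affects exactness of Proposition \ref{lesloc2} in the limit, which this statement does not use).
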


\begin{rem}
    Note that Zhou uses grading by $-\mu_{CZ}$ and therefore follows a cohomological convention for symplectic homology.Symplectic cohomology in \cite{zhou2021symplectic} is equivalent to the construction of symplectic homology in this paper.
\end{rem}

\begin{rem}
    Note that Zhou (\cite{zhou2021symplectic}) uses a quadratic Hamiltonian to define symplectic homology. Therefore to have a well defined differential in symplectic cohomology using a quadratic Hamiltonian, we must replace the direct sum in  $SC(W,H,J):=\bigoplus _{\gamma\in \mathcal{P}(H)}{\mathbb{Z}}\gamma$ by a direct product $SC(W,H,J):=\Pi _{\gamma\in \mathcal{P}(H)}{\mathbb{Z}}\gamma$. We could also stick to a direct sum construction and replace $SH^k$ by an action bounded version (see remark \ref{action bound}). One could also try to adapt Zhou's proof to linear Hamiltonians.
\end{rem}
\begin{thm}[\label{zh}Theorem\ref{zhou}]
    Let $M$ be an ADC contact manifold with a topologically simple exact filling $W$ such that $SH_*(W)=0$ and $\pi_i(M)\rightarrow \pi_i(W)$ is an isomorphism for $i=1,2$. Then for any other topologically simple exact filling $W'$, $\pi_i(M)\rightarrow \pi_i(W')$ is an isomorphism for $i=1,2$.
\end{thm}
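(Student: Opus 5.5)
First, the $\pi_1$ statement is exactly Zhou's Theorem E: since $M$ is ADC, $W$ is topologically simple with $SH_*(W)=0$, and $\pi_1(M)\to\pi_1(W)$ is an isomorphism, every topologically simple exact filling $W'$ has $\pi_1(M)\to\pi_1(W')$ an isomorphism. So the new content is the statement for $\pi_2$. Write $\tilde W,\tilde W'$ for the universal covers. Because $\pi_1(M)\cong\pi_1(W)\cong\pi_1(W')$, the boundaries $\partial\tilde W$ and $\partial\tilde W'$ are both the universal cover $\tilde M$ of $M$, and they agree as coverings of $M$. This lets me apply Proposition \ref{zhouprop}: the map $\tilde\partial_\delta:H_{*+n+1}(\tilde M,k)\to SH^{-*}_+(W,k[\pi_1])$ is the same for $W$ and for $W'$.

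Next I use the vanishing results on $W$. Over any field $k$, $SH_*(W)=0$ gives $SH^*(W,k)=0$ by Theorem \ref{adual}, and then $SH^*(W,k[\pi_1(W)])=0$ by Theorem \ref{vansc}. The long exact sequence of Proposition \ref{lesloc2} then shows that the connecting map $H_{*+n+1}(\tilde W,k)\to SH^{-*}_+(W,k[\pi_1])$ is an isomorphism. By hypothesis $\pi_2(M)\to\pi_2(W)$ is an isomorphism, so $\pi_1,\pi_2$ of $\tilde M\to\tilde W$ are isomorphisms. Hurewicz then gives $H_2(\tilde M)\to H_2(\tilde W)$ an isomorphism and $H_1=0$ on both. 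Consequently $\tilde\partial_\delta$, which is the inclusion $H(\tilde M)\to H(\tilde W)$ followed by the connecting map, is injective in degrees $\le 2$, and its image is the image of $H_{\le 2}(\tilde W)$.

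Now pass to $W'$. There $\tilde\partial_\delta'$ equals $\tilde\partial_\delta$ and factors as $H_j(\tilde M)\to H_j(\tilde W')\to SH_+(W',k[\pi_1])$. Injectivity of $\tilde\partial_\delta'$ forces $H_j(\tilde M,k)\to H_j(\tilde W',k)$ to be injective for $j\le 2$ and every field $k$.

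The main obstacle is upgrading this to surjectivity of $\pi_2(\tilde M)\to\pi_2(\tilde W')$; injectivity on $H_2$ alone is not enough. The plan is to use topological simplicity of $W'$: $\tilde W'$ is obtained from $\tilde M$ by attaching cells of index $\ge$ roughly $n$, so $(\tilde W',\tilde M)$ is highly connected. Then the relative Hurewicz theorem gives $\pi_2(\tilde W',\tilde M)=0$ (or $\pi_3$ if needed) once $n$ is large, and the homotopy long exact sequence of the pair yields the isomorphism on $\pi_2$. If topological simplicity only gives $H_1(\tilde W',\tilde M)=0$-type information, I would instead show $H_j(\tilde W',\tilde M;k)=0$ for $j\le 3$. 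For this I would use the map $H_{j}(\tilde W')\to SH_+$ together with the fact that on $W'$ one has $SH^*(W',k[\pi_1])\to H(\tilde W')$. The worry is that $SH(W')$ need not vanish, so I would compare via Proposition \ref{zhouprop} and the Weinstein-type surjectivity at index $3$. After that I would conclude using the relative Hurewicz theorem and passage from $\tilde W'$ to $W'$ (an isomorphism on $\pi_2$). Finally, integral statements follow by universal coefficients over all fields, as in the proof of Theorem \ref{homotopy}.
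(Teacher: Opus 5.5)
\textbf{The gap: surjectivity on $\pi_2$.} Your first half matches the paper: Zhou's Theorem E for $\pi_1$, the identification $\partial\tilde W=\partial\tilde W'=\tilde M$, Proposition \ref{zhouprop}, and the vanishing on $W$ that makes the connecting map $c_W\colon H_{*+n+1}(\tilde W,k)\to SH^{-*}_+(W,k[\pi_1])$ an isomorphism. Your deduction that $\iota'\colon H_j(\tilde M,k)\to H_j(\tilde W',k)$ is injective for $j\le 2$ is also correct.

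Your main plan for surjectivity rests on a false claim. Topological simplicity of $W'$ is only a condition on $c_1(W')$ and on $\pi_1(M)\to\pi_1(W')$. It says nothing about the indices of handles of $W'$ relative to $M$, because an exact filling need not be Weinstein. If such an index bound held, then for $n$ large $(\tilde W',\tilde M)$ would be highly connected. The $\pi_2$ statement would then follow for every $W'$ from relative Hurewicz alone, with no hypothesis on $W$ and no Floer theory, and the introduction's obstruction to Weinstein fillability would be empty. Your fallback (``Weinstein-type surjectivity at index $3$'') relies on the same unavailable structure, so as written the argument does not close.

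\textbf{The missing idea: $SH_*(W')=0$.} Your worry is unfounded, because $SH_*(W')=0$ does hold. Vanishing of symplectic (co)homology does not depend on which topologically simple exact filling of an ADC manifold you take (\cite{zhou2021symplectic}). In the present language: $SH^*(W,k)=0$ exactly when the connecting map sends the point class of $H_0(W)$ to a nonzero element of $SH_+$. That point class comes from $H_0(M)$, where $\partial_\delta$ is filling independent, so the same holds for $W'$.

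Hence Theorem \ref{vansc} and Proposition \ref{lesloc2} apply to $W'$ as well, and $c_{W'}\colon H_{*+n+1}(\tilde W',k)\to SH^{-*}_+(W',k[\pi_1])$ is an isomorphism. Let $\Phi$ be the identification of positive groups in Proposition \ref{zhouprop}. Filling independence then reads $c_{W'}\circ\iota'=\Phi\circ c_W\circ\iota$, so $\iota'=(c_{W'}^{-1}\Phi\, c_W)\circ\iota$ is $\iota$ followed by an isomorphism. Therefore $\iota'$ is bijective in degree $2$ because $\iota$ is. This is how the paper obtains $H_2(\tilde M,k)\cong H_2(\tilde W',k)$ before passing to $\mathbb{Z}$ and to $\pi_2$ via Hurewicz; no connectivity of $(\tilde W',\tilde M)$ is needed.

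\textbf{A further caution on integer coefficients.} The final passage to $\mathbb{Z}$ is not ``as in Theorem \ref{homotopy}''. There all the groups vanished. Here you need an integral isomorphism between possibly nonzero, possibly infinitely generated groups. A map can become an isomorphism after tensoring with every field without being one integrally: for example, $\mathbb{Z}/p^2\to\mathbb{Z}/p$ becomes an isomorphism after tensoring with every field. So this step needs extra integral input.
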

\begin{proof}
    For $i=1$, $\pi_1(M)\rightarrow \pi_1(W')$ is an isomorphism by \cite{zhou2021symplectic}(Theorem E). Therefore for the universal cover $\tilde{W'}$ of $W'$ and universal cover $\tilde{M}$ of $M$ we have $\partial \tilde{W'}=\tilde{M}$. Therefore by proposition \ref{zhouprop}, the map $\tilde{\partial_\delta}:\partial_\delta:H_{*+n+1}(\partial \tilde{W'},k)\rightarrow SH^{-*}_+(\partial {W'},k[\pi_1(W)])$ is independent for topologically simple exact filling $W'$. 
   
    Since symplectic homology vanishes, so does symplectic cohomology $SH^*(M,k)$ by proposition \ref{adual} and so does symplectic cohomology of the universal cover $SH^{-*}_+(\partial {W},k[\pi_1(W)])$ by theorem \ref{vansc}. This implies by proposition \ref{lesloc2},that $H_{*+n+1}(\tilde{W'})\cong SH^{-*}_+(\partial W',k[\pi_1(W')])$. We can make the identification $\pi_2(M)\cong H_2(\tilde{M})$ and $\pi_2(W)\cong H_2(\tilde{W})$ since the second homology of universal cover is isomorphic to $\pi_2$ by Hurewicz theorem and since $\pi_2(\tilde{X})\cong \pi_2(X)$ for any space $X $ and its covering $\tilde{X}$. Therefore $H_2(\tilde{M},k)\cong H_2(\tilde{W},k)$ by universal coefficients theorem applied to $H_2(\tilde{M},
    \mathbb{Z})\cong H_2(\tilde{W},\mathbb{Z})$. By proposition \ref{zhouprop}, $H_2(\tilde{M},k)\cong H_2(\tilde{W'},k)$ for all topologically simple Liouville fillings . Universal coefficients theorem and the structure theorem for finitely generated abelian groups implies $H_2(\tilde{M})\cong H_2(\tilde{W})$. The previous isomorphism  is a  verification by first comparing the ranks of the two group followed by iteratively checking $\mathbb{Z}_{p^m}$ part in their decomposition using the properties $Tor(\mathbb{Z}_n,\mathbb{Z}_m)\cong \mathbb{Z}_m\otimes\mathbb{Z}_n\cong\mathbb{Z}_{gcd(m,n)}$.After identifying these groups with $\pi_2$, we have $\pi_2(M)\cong\pi_2(W')$.
\end{proof}

Now for higher $\pi_i$, we prove theorem \ref{higherhom}.
\begin{thm}[Theorem \ref{higherhom}]\label{higherhomproof}
    Let $M$ be an ADC contact manifold with a topologically simple exact filling $W$ such that $SH_*(W)=0$ and $\pi_i(M)\rightarrow \pi_i(W)$ is an isomorphism for $i\leq k$. Then for any other topologically simple exact filling $W'$, $\pi_i(M)\rightarrow \pi_i(W')$ is an isomorphism for $i\leq k-2$.
\end{thm}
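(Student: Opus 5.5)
We follow the strategy of the proof of Theorem \ref{zh}, replacing the use of Hurewicz in degree $2$ by an inductive Hurewicz/Whitehead argument on universal covers. First, the case $i=1$ is Zhou's Theorem E (\cite{zhou2021symplectic}): $\pi_1(M)\rightarrow\pi_1(W')$ is an isomorphism. Hence the universal covers satisfy $\partial\tilde{W}=\tilde{M}=\partial\tilde{W'}$, and Proposition \ref{zhouprop} applies. The map $\tilde{\partial_\delta}:H_{*+n+1}(\tilde{M},k)\rightarrow SH^{-*}_+(W,k[\pi_1])$ is therefore the same for $W$ and $W'$ after these identifications.

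Next we use vanishing. Since $SH_*(W)=0$, Theorem \ref{adual} and Theorem \ref{vansc} give $SH^*(W,k[\pi_1(W)])=0$. Proposition \ref{lesloc2} then identifies $H_{*+n+1}(\tilde{W},k)\cong SH^{-*}_+(W,k[\pi_1])$, and $\tilde{\partial_\delta}$ is the composition of $H(\tilde M,k)\rightarrow H(\tilde W,k)$ with this isomorphism.

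The step I expect to be the main obstacle is making the same conclusion for $W'$ in every degree. For $W'$, $SH(W')$ need not vanish a priori. I would first try to use the independence of $SH^+$ together with the unit being killed in $SH^+$ through $\delta_\partial$, following Zhou's Theorem A, to deduce $SH_*(W')=0$. Failing that, I would compare kernels and images of $\tilde{\partial_\delta}$ directly through Proposition \ref{lesloc2} for $W'$, exactly as is implicitly done in the proof of Theorem \ref{zh}.

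Granting this, $H_j(\tilde M,k)\rightarrow H_j(\tilde{W'},k)$ is injective in the relevant range. Because $\partial_\delta$ agrees with that of $W$, where the map $H_j(\tilde M)\rightarrow H_j(\tilde W)$ is an isomorphism, we get equality of ranks. The degree shift between the two long exact sequences costs one degree at each end. This is where the loss $k\mapsto k-2$ enters: the exactness argument only controls $H_j$ for $j\le k-1$ injectively and $j\le k-2$ isomorphically.

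For the homotopy input, $\pi_i(M)\cong\pi_i(W)$ for $i\le k$ lifts to the simply connected covers $\tilde M,\tilde W$. Relative Hurewicz gives $\pi_i(\tilde W,\tilde M)=0$ for $i\le k$, hence $H_i(\tilde W,\tilde M)=0$ for $i\le k$, so $H_j(\tilde M)\rightarrow H_j(\tilde W)$ is an isomorphism for $j<k$ and surjective at $j=k$. Transporting through $\tilde{\partial_\delta}$, we obtain that $H_j(\tilde M,k)\rightarrow H_j(\tilde{W'},k)$ is an isomorphism for $j\le k-2$ and all fields $k$. The universal coefficients argument in the proof of Theorem \ref{zh}, comparing ranks and then $\mathbb{Z}_{p^m}$ summands via Tor, upgrades this to $\mathbb{Z}$ coefficients. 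It gives $H_j(\tilde{W'},\tilde M)=0$ for $j\le k-2$. Finally, relative Hurewicz applied to the simply connected pair $(\tilde{W'},\tilde M)$ gives $\pi_j(\tilde{W'},\tilde M)=0$ in this range. The long exact homotopy sequence of the pair and $\pi_i(\tilde X)\cong\pi_i(X)$ for $i\ge2$ then yield $\pi_i(M)\cong\pi_i(W')$ for $i\le k-2$.
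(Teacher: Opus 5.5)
Your overall route is the paper's: Zhou's Theorem E for $\pi_1$, matching universal covers $\partial\tilde W=\tilde M=\partial\tilde{W'}$, Proposition \ref{zhouprop} with Theorem \ref{vansc} and Proposition \ref{lesloc2} to transport the homology comparison from $W$ to $W'$, then relative Hurewicz. Your worry that one needs $SH_*(W')=0$ is legitimate. The paper uses it implicitly, and your first suggestion supplies it: Zhou's result that, for ADC manifolds, vanishing of $SH$ passes to every topologically simple exact filling.

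The genuine problem is the degree bookkeeping, which as written does not reach $k-2$. You assert that the Floer comparison loses a degree, so that $H_j(\tilde M)\to H_j(\tilde{W'})$ is an isomorphism only for $j\le k-2$. Then $H_j(\tilde{W'},\tilde M)$ and $\pi_j(\tilde{W'},\tilde M)$ vanish only for $j\le k-2$. In $\pi_{i+1}(\tilde{W'},\tilde M)\to\pi_i(\tilde M)\to\pi_i(\tilde{W'})\to\pi_i(\tilde{W'},\tilde M)$ you then get an isomorphism only for $i\le k-3$. At $i=k-2$ you have surjectivity, but injectivity needs $\pi_{k-1}(\tilde{W'},\tilde M)=0$. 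So your intermediate claims prove at best $k-3$. You have put both lost degrees into the Floer step and forgotten the degree that the final homotopy sequence costs.

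The fix is that the Floer step loses nothing; the ``degree shift between the two long exact sequences'' does not exist. Both maps $\tilde{\partial_\delta}$ are degree preserving. Once $SH^*(W,F[\pi_1])=0=SH^*(W',F[\pi_1])$ for a field $F$, Proposition \ref{lesloc2} gives isomorphisms $\phi:H_j(\tilde W;F)\to SH_+$ and $\phi':H_j(\tilde{W'};F)\to SH_+$ in every degree $j$. Proposition \ref{zhouprop} says $\phi\circ\iota_W=\phi'\circ\iota_{W'}$ for the inclusion-induced maps $\iota_W,\iota_{W'}$ out of $H_j(\tilde M;F)$. Hence $\iota_{W'}=\phi'^{-1}\circ\phi\circ\iota_W$ degreewise, so $\iota_{W'}$ is bijective for $j\le k-1$ and surjective at $j=k$, exactly where $\iota_W$ is.

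This gives $H_j(\tilde{W'},\tilde M;F)=0$ for $j\le k$ and all fields $F$. Apply universal coefficients to these relative groups rather than comparing ranks or $\mathbb{Z}_{p^m}$ summands of the absolute groups. The latter only yields abstract isomorphisms and breaks down when $\pi_1$ is infinite, since $H_j(\tilde M)$ need not be finitely generated. Universal coefficients gives $H_j(\tilde{W'},\tilde M;\mathbb{Z})=0$ for $j\le k-1$. Relative Hurewicz then gives $\pi_j(\tilde{W'},\tilde M)=0$ for $j\le k-1$, hence $\pi_i(M)\cong\pi_i(W')$ for $i\le k-2$. The two lost degrees are relative-to-absolute homology on the $W$ side and relative-to-absolute homotopy on the $W'$ side, which is the paper's count.
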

\begin{proof}
    The proof of cases till $k=4$ is implied by theorem \ref{zhou}. The hypothesis $\pi_i(M)\rightarrow \pi_i(W)$ for $i\leq k$ implies $\pi_i(W,M)=0$ for $i\leq k$. Using relative Hurewicz theorem on the universal covers $\tilde{W}$ and $\tilde{M}$ so that one doesn't have to factor out by an action of $\pi_1(M)$ , we have $H_i(\tilde{W},\tilde{M})=0$ for $i\leq k$. Now the long exact sequence of homology groups of the pair $(\tilde{W},\tilde{M})$ implies $H_i(\tilde{M})\rightarrow H_i(\tilde{W})$ is an isomorphism for $i\leq k-1$
  
    As in the proof of theorem \ref{zhou}, using proposition \ref{zhouprop} we get $H_i(\tilde{M})\cong H_i(\tilde{W'})$ for $i\leq k-1$. By the long exact sequence of relative homology groups, this implies $H_i(\tilde{W'},\tilde{M})=0$ for $i\leq k-1$
    and thereby  $\pi_i(\tilde{W'},\tilde{M})=0$ for $i\leq k-1$ by relative Hurewicz theorem. The long exact sequence of homotopy groups now implies $\pi_i(\tilde{M})\cong \pi_i(\tilde{W'})$ for $i\leq k-2$. Since $\pi_i(\tilde{X})\cong\pi_i(X)$ for $i>1$, for any space $X$ and its universal cover $\tilde{X}$. We can conclude $\pi_i({M})\cong \pi_i({W'})$ for $i\leq k-2$.
    \end{proof}

\bibliographystyle{alpha}
\bibliography{ref_conv}

@article{dardennes2024symplectic,
  title={Symplectic non-convexity of toric domains},
  author={Dardennes, Julien and Gutt, Jean and Zhang, Jun},
  journal={Communications in Contemporary Mathematics},
  volume={26},
  number={04},
  pages={2350010},
  year={2024},
  publisher={World Scientific}
}

@article{zhou2021symplectic,
  title={Symplectic fillings of asymptotically dynamically convex manifolds I},
  author={Zhou, Zhengyi},
  journal={Journal of Topology},
  volume={14},
  number={1},
  pages={112--182},
  year={2021},
  publisher={Wiley Online Library}
}

@article{zhou2023fillings,
  title={On fillings of $\partial(V\times$ D)},
  author={Zhou, Zhengyi},
  journal={Mathematische Annalen},
  volume={385},
  number={3},
  pages={1493--1520},
  year={2023},
  publisher={Springer}
}

@article{bourgeois2009exact,
  title={An exact sequence for contact-and symplectic homology},
  author={Bourgeois, Fr{\'e}d{\'e}ric and Oancea, Alexandru},
  journal={Inventiones mathematicae},
  volume={175},
  number={3},
  pages={611--680},
  year={2009},
  publisher={Springer}
}

@article{bowden2022tight,
  title={Tight contact structures without symplectic fillings are everywhere},
  author={Bowden, Jonathan and Gironella, Fabio and Moreno, Agustin and Zhou, Zhengyi},
  journal={arXiv preprint arXiv:2211.03680},
  year={2022}
}

@article{cieliebak2018symplectic,
  title={Symplectic homology and the Eilenberg--Steenrod axioms},
  author={Cieliebak, Kai and Oancea, Alexandru},
  journal={Algebraic \& Geometric Topology},
  volume={18},
  number={4},
  pages={1953--2130},
  year={2018},
  publisher={Mathematical Sciences Publishers}
}

@article{mclean2016reeb,
  title={Reeb orbits and the minimal discrepancy of an isolated singularity},
  author={McLean, Mark},
  journal={Inventiones mathematicae},
  volume={204},
  number={2},
  pages={505--594},
  year={2016},
  publisher={Springer}
}

@article{chaidez20223d,
  title={3D convex contact forms and the Ruelle invariant},
  author={Chaidez, Julian and Edtmair, Oliver},
  journal={Inventiones mathematicae},
  volume={229},
  number={1},
  pages={243--301},
  year={2022},
  publisher={Springer}
}

@article{lazarev2020contact,
  title={Contact manifolds with flexible fillings},
  author={Lazarev, Oleg},
  journal={Geometric and Functional Analysis},
  volume={30},
  number={1},
  pages={188--254},
  year={2020},
  publisher={Springer}
}

@article{hofer1999characterization,
  title={A characterization of the tight 3-sphere II},
  author={Hofer, Helmut and Wysocki, Kris and Zehnder, K},
  journal={Communications on Pure and Applied Mathematics: A Journal Issued by the Courant Institute of Mathematical Sciences},
  volume={52},
  number={9},
  pages={1139--1177},
  year={1999},
  publisher={Wiley Online Library}
}

@article{kwon2024dynamically,
  title={On dynamically convex contact manifolds and filtered symplectic homology},
  author={Kwon, Myeonggi and Oba, Takahiro},
  journal={Journal of the London Mathematical Society},
  volume={109},
  number={5},
  pages={e12914},
  year={2024},
  publisher={Wiley Online Library}
}

@article{abreu2017multiplicity,
  title={Multiplicity of periodic orbits for dynamically convex contact forms},
  author={Abreu, Miguel and Macarini, Leonardo},
  journal={Journal of Fixed Point Theory and Applications},
  volume={19},
  number={1},
  pages={175--204},
  year={2017},
  publisher={Springer}
}

@article{chantrainefloer,
  title={Floer homology and Lagrangian concordance},
  author={Chantraine, Baptiste and Rizell, Georgios Dimitroglou and Ghiggini, Paolo and Golovko, Roman}
}

@article{viterbo1999functors,
  title={Functors and computations in Floer homology with applications, I},
  author={Viterbo, Claude},
  journal={Geometric \& Functional Analysis GAFA},
  volume={9},
  number={5},
  pages={985--1033},
  year={1999},
  publisher={Springer}
}

@article{ginzburg2021dynamical,
  title={Dynamical convexity and closed orbits on symmetric spheres},
  author={Ginzburg, Victor L and Macarini, Leonardo},
  journal={Duke Mathematical Journal},
  volume={170},
  number={6},
  pages={1201--1250},
  year={2021},
  publisher={Duke University Press}
}

@article{ritter2013topological,
  title={Topological quantum field theory structure on symplectic cohomology},
  author={Ritter, Alexander F},
  journal={Journal of Topology},
  volume={6},
  number={2},
  pages={391--489},
  year={2013},
  publisher={Wiley Online Library}
}

@article{damian2012floer,
  title={Floer homology on the universal cover, Audin’s conjecture and other constraints on Lagrangian submanifolds},
  author={Damian, Mihai},
  journal={Commentarii Mathematici Helvetici},
  volume={87},
  number={2},
  pages={433--463},
  year={2012}
}

@article{abouzaid2010open,
  title={An open string analogue of Viterbo functoriality},
  author={Abouzaid, Mohammed and Seidel, Paul},
  journal={Geometry \& Topology},
  volume={14},
  number={2},
  pages={627--718},
  year={2010},
  publisher={Mathematical Sciences Publishers}
}

@article{kim2024topology,
  title={On the topology of Lagrangian fillings of the standard Legendrian sphere},
  author={Kim, Joontae and Kwon, Myeonggi},
  journal={Journal of Symplectic Geometry},
  volume={22},
  number={3},
  pages={525--547},
  year={2024}
}

@inproceedings{cieliebak2010rabinowitz,
  title={Rabinowitz Floer homology and symplectic homology},
  author={Cieliebak, Kai and Frauenfelder, Urs and Oancea, Alexandru},
  booktitle={Annales scientifiques de l'Ecole normale sup{\'e}rieure},
  volume={43},
  number={6},
  pages={957--1015},
  year={2010}
}

@article{auroux2013beginner,
  title={A beginner's introduction to fukaya categories},
  author={Auroux, Denis},
  journal={arXiv preprint arXiv:1301.7056},
  year={2013}
}

@article{damian2009constraints,
  title={Constraints on exact Lagrangians in cotangent bundles of manifolds fibered over the circle},
  author={Damian, Mihai},
  journal={Commentarii Mathematici Helvetici},
  volume={84},
  number={4},
  pages={705--746},
  year={2009}
}

@article{Zhou_2024,
   title={On fillings of contact links of quotient singularities},
   volume={17},
   ISSN={1753-8424},
   url={http://dx.doi.org/10.1112/topo.12329},
   DOI={10.1112/topo.12329},
   number={1},
   journal={Journal of Topology},
   publisher={Wiley},
   author={Zhou, Zhengyi},
   year={2024},
   month=Mar }

@article{oancea2012topology,
  title={On the topology of fillings of contact manifolds and applications},
  author={Oancea, Alexandru and Viterbo, Claude},
  journal={Commentarii mathematici helvetici},
  volume={87},
  number={1},
  pages={41--69},
  year={2012}
}

@article{albers2017local,
  title={Local systems on the free loop space and finiteness of the Hofer-Zehnder capacity},
  author={Albers, Peter and Frauenfelder, Urs and Oancea, Alexandru},
  journal={Mathematische Annalen},
  volume={367},
  number={3},
  pages={1403--1428},
  year={2017},
  publisher={Springer}
}

@article{barraud2007lagrangian,
  title={Lagrangian intersections and the Serre spectral sequence},
  author={Barraud, Jean-Fran{\c{c}}ois and Cornea, Octav},
  journal={Annals of mathematics},
  pages={657--722},
  year={2007},
  publisher={JSTOR}
}

@article{barraud2024floer,
  title={Floer Homology with DG Coefficients. Applications to cotangent bundles},
  author={Barraud, Jean-Fran{\c{c}}ois and Damian, Mihai and Humiliere, Vincent and Oancea, Alexandru},
  journal={arXiv preprint arXiv:2404.07953},
  year={2024}
}

@article{kervaire1963groups,
  title={Groups of homotopy spheres: I},
  author={Kervaire, Michel A and Milnor, John W},
  journal={Annals of mathematics},
  volume={77},
  number={3},
  pages={504--537},
  year={1963},
  publisher={JSTOR}
}

@article{freedman1982topology,
  title={The topology of four-dimensional manifolds},
  author={Freedman, Michael Hartley},
  journal={Journal of Differential Geometry},
  volume={17},
  number={3},
  pages={357--453},
  year={1982},
  publisher={Lehigh University}
}

@misc{shahul2026topologicalobstructionsdynamicalconvexity,
      title={Topological Obstructions to Dynamical Convexity}, 
      author={Shahnaz Shamim Shahul},
      year={2026},
      eprint={2512.03893},
      archivePrefix={arXiv},
      primaryClass={math.SG},
      url={https://arxiv.org/abs/2512.03893}, 
}

@article{mcduff1991symplectic,
  title={Symplectic manifolds with contact type boundaries},
  author={McDuff, Dusa},
  journal={Inventiones mathematicae},
  volume={103},
  number={1},
  pages={651--671},
  year={1991},
  publisher={Springer}
}

@article{eliashberg1996local,
  title={Local Lagrangian 2-knots are trivial},
  author={Eliashberg, Yakov and Polterovich, Leonid},
  journal={Annals of mathematics},
  volume={144},
  number={1},
  pages={61--76},
  year={1996},
  publisher={JSTOR}}

@article{zhoudulagrangian,
  title={On uniqueness of the Lagrangian fillings},
  author={Du, Lite and Zhou, Zhengyi},
  journal={in preparation},
  
  year={},
  publisher={}}

\end{document}